\newlength{\hatchspread}
\newlength{\hatchthickness}
\newlength{\hatchshift}
\newcommand{\hatchcolor}{}
\tikzset{hatchspread/.code={\setlength{\hatchspread}{#1}},
         hatchthickness/.code={\setlength{\hatchthickness}{#1}},
         hatchshift/.code={\setlength{\hatchshift}{#1}},% must be >= 0
         hatchcolor/.code={\renewcommand{\hatchcolor}{#1}}}
\tikzset{hatchspread=3pt,
         hatchthickness=0.4pt,
         hatchshift=0pt,% must be >= 0
         hatchcolor=black}
\newtheorem{theorem}{Theorem}[section]
\newtheorem{lemma}[theorem]{Lemma}
\newtheorem{proposition}[theorem]{Proposition}
\newtheorem{corollary}[theorem]{Corollary}
\newtheorem*{claim}{Claim}
\theoremstyle{definition}
\newtheorem{definition}[theorem]{Definition}
\newtheorem{convention}[theorem]{Convention}
\theoremstyle{question}
\newtheorem{ques}[theorem]{Question}
\newtheorem{conj}[theorem]{Conjecture}
\newtheorem*{conjBrown}{Brown's Conjecture}
\theoremstyle{remark}
\newtheorem{example}[theorem]{Example}
\newtheorem{remark}[theorem]{Remark}
\numberwithin{equation}{section}
\newcommand{\abs}[1]{\lvert#1\rvert}
\newcommand{\bigslant}[2]{{\raisebox{.2em}{$#1$}\left/\raisebox{-.2em}{$#2$}\right.}}
\newcommand{\mF}{\mathcal {F}}
\newcommand{\mP}{\mathcal {P}}
\newcommand{\mA}{\mathcal {A}}
\newcommand{\ugd}{\underline{\mathrm{gd}}}
\newcommand{\ucd}{\underline{\mathrm{cd}}}
\newcommand{\vcd}{{\mathrm{vcd}}}
\newcommand{\fgd}{{\mathrm{gd}_{\mF}}}
\newcommand{\fcd}{{\mathrm{cd}_{\mF}}}
\newcommand{\eg}{{\underline EG}}
\newcommand{\EFG}{E_{\mF}G}
\newcommand{\isc}{\subset}
\newcommand{\dB}[1]{\mathrm{lcd}\mathcal #1}
\newcommand{\B}{B}
\newcommand{\D}{K}
\newcommand{\Z}{\mathbb Z}
\newcommand{\mQ}{\mathcal Q}
\newcommand{\mR}{\mathcal R}
\newcommand{\mB}{\mathcal B}
\newcommand{\orb}{\mathcal{O}_{\mF}G}
\newcommand{\orbmod}{\mbox{Mod-}\mathcal{O}_{\mF}G}
\newcommand{\nathom}{\mathrm{Hom}_{\mF}}
\newcommand{\rH}{\widetilde{H}}
\newcommand{\nH}{H}
\newcommand{\pan}[2]{{#1}_{#2}}
\newcommand{\link}[2]{{#1}_{>#2}}
\newcommand{\flink}[2]{\cup_{#2 < #2'}\pan{#1}{#2'}}
\begin{document}

\tikzset{->-/.style={decoration={
  markings,
  mark=at position .65 with {\arrow{>}}},postaction={decorate}}}

\title[Invariants of simple complexes of groups]{Cohomological and geometric invariants of simple complexes of groups}

%    Information for first author
\author[N.\ Petrosyan]{Nansen Petrosyan}

\address{School of Mathematics, University of Southampton, Southampton SO17 1BJ, UK}

\email{n.petrosyan@soton.ac.uk}

%    Information for second author
\author[T.\ Prytu{\l}a]{Tomasz Prytu{\l}a}

\address{Department of applied mathematics and computer science, Technical University of Denmark, Lyngby, Denmark}

\email{tomasz.prytula@alexandra.dk}

 \thanks{Both authors were supported by the EPSRC First Grant EP/N033787/1. T. P. was supported by the EU Horizon 2020 program under the Marie Sk{\l}odowska-Curie grant agreement no. 713683 (COFUNDfellowsDTU)}

%    General info
\subjclass[2010]{Primary 20F65, 05E18, 05E45; Secondary 20E08, 20J06}
\date{\today}

\keywords{complex of groups, classifying space, standard development, Coxeter system, building, virtual cohomological dimension, Bredon cohomological dimension}

\begin{abstract} 

We investigate cohomological properties of fundamental groups of strictly developable simple complexes of groups $X$.  We obtain a polyhedral complex equivariantly homotopy equivalent to $X$ of the lowest possible dimension.  As applications, we obtain a simple formula for proper cohomological dimension of $\mathrm{CAT}(0)$ groups whose actions admit a strict fundamental domain.
For any building of type $(W,S)$ that admits a chamber transitive action by a discrete group, we give  a realisation of the building of the lowest possible dimension equal to the virtual cohomological dimension of $W$. 
Under general assumptions, we  confirm a folklore conjecture on the equality of Bredon geometric and cohomological dimensions in dimension one. Lastly, we give a new family of counterexamples to the strong form of Brown's conjecture on the equality of virtual cohomological dimension and Bredon cohomological dimension for proper actions. \end{abstract}

\maketitle

\section{Introduction}\label{sec:intro}

\subsection*{Overview}

	For a finitely generated Coxeter system $(W, S)$, the Davis complex $\Sigma_W$ is a $\mathrm{CAT}(0)$ polyhedral complex on which the Coxeter group $W$ acts properly, cocompactly and by reflections. The complex $\Sigma_W$ is very useful in understanding properties of $W$ or more generally of buildings of type $(W, S)$ where it appears as an apartment.  However, the Davis complex $\Sigma_W$ does not in general produce the realisation of these buildings of the lowest possible dimension.  There is an associated  contractible  polyhedral complex $B(W,S)$ of dimension equal to the virtual cohomological dimension $\vcd W$ of the Coxeter group $W$ (except possibly when $\vcd W=2$) introduced by Bestvina in \cite{Best}. The group $W$ acts by reflections properly and cocompactly on $B(W,S)$.  Bestvina complex $B(W,S)$ is equivariantly homotopy equivalent to the Davis complex $\Sigma_W$  \cite{PePry}. 
	Therefore by replacing the apartments with $B(W,S)$ one obtains a realisation of the building of type $(W, S)$ of the lowest possible dimension. In \cite{PePry}, we  derived analogous results in the more general setting of strictly developable thin simple complexes of finite groups. In doing so we  relied on compactly supported cohomology as a convenient tool for computations.  This is certainly the norm, as compactly supported cohomology can be very useful in computations of the cohomology of a $G$-CW-complex with group ring coefficients \cite{brownwall}, \cite{Best}, \cite{HarMei}, \cite{Davbook}, \cite{DMP}.  A major drawback of this approach  however is that it  restricts one to only complexes that are locally finite.

	To resolve this difficulty, in this paper we introduce a new approach that bypasses compactly supported cohomology and thus allows us to study non-proper actions admitting a strict fundamental domain, or equivalently, simple complexes of groups whose local groups need not be finite. Given a simple complex of groups $G(\mQ)$,  we first extend the definition of the Bestvina complex to $G(\mQ)$. Our methods then directly link Bredon cohomology of the Bestvina complex associated to $G(\mathcal Q)$ with certain coefficients, and the relative integral cohomology of the panel complexes over the poset $\mQ$. This enables us to compute the cohomology of the fundamental group of $G(\mathcal Q)$, determine its cohomological dimension, and identify it with the dimension of the generalisation of Bestvina complex in this context.

  Our approach also leads to cohomological computations on more naturally occurring simple complexes of groups without the \emph{thinness} assumption. This is a standing assumption in both \cite{DMP} and \cite{PePry}. It states that the cellular structure of a complex is in a sense minimal with respect to the group action, and it is fairly restrictive. In particular, removing this assumption, allows us to investigate group actions on $\mathrm{CAT}(0)$ polyhedral complexes that admit a strict fundamental domain.	
		
		Besides aforementioned applications, another important motivation to study the generalised Bestvina complex comes from the Baum-Connes and the Farrell-Jones Conjectures (see e.g.,\ \cite{BCH}, \cite{Lucksurvey}), where it is always desirable to have models for the classifying space  of $G$ for the family of stabilisers $\mF$ of minimal dimension and cell structure (see e.g.,\ \cite{MFR} for a direct application of Bestvina complex).

    \subsection*{Statement of results}

    A \emph{simple complex of groups} $G(\mathcal Q)$ over a finite poset $\mathcal Q$ consists of a collection of groups $\{P_J\}_{J \in \mathcal Q}$ and a collection of monomorphisms $\{P_J \to P_T\}_{J \leqslant T \in \mathcal Q}$ satisfying the obvious compatibility conditions. We say that $G(\mathcal Q)$ is {\it thin} if the monomorphism $P_J \to P_T$  is an isomorphism if and only if $J=T$. The \emph{fundamental group} $G$ of $G(\mathcal Q)$ is defined as the direct limit of the system $\{P_J\}_{J \in \mathcal Q}$.  We say that $G(\mathcal Q)$ is \emph{strictly developable} if for every group $P_J$ the canonical map to the limit $G$ is a monomorphism; in this case, we identify $P_J$ with its image in $G$ and call it a \emph{local subgroup} of $G$.

	A {\it family} of subgroups of a discrete group $G$ is a collection of subgroups that is closed under conjugation and taking subgroups. Given a collection of subgroups  $\{P_J\}_{J \in \mathcal Q}$ of $G$, the family \emph{generated by the collection} $\{P_J\}_{J \in \mathcal Q}$ is the smallest family $\mF$  of subgroups of $G$ that contains all elements of $\{P_J\}_{J \in \mathcal Q}$. 

	Suppose $G(\mathcal Q)$ is strictly developable with fundamental group $G$. We say that $G(\mathcal Q)$  is {\it rigid}, if for any $J\in \mQ$ no $G$-conjugate of $P_J$ is properly contained in $P_J$. Define a {\it block} $C \subseteq \mathcal Q$ as an equivalence class of elements of $\mathcal Q$ under the relation $\sim$ generated by $J'\sim J$ if $J' \leqslant J$ and $P_{J'} \to P_J$ is an isomorphism. For a fixed $J\in \mQ$ and $g\in G$, let $\Omega^g_J$ be the subset of $\mQ$ that consists of all $U \in \mQ$ for which $P_U=g^{-1}P_Jg$ (seen as subgroups of $G$). We denote by $C_J^g\subseteq \Omega^g_J$ a block in $\Omega^g_J$. Let $\mathcal I_J$ be a complete set of representatives of  
	$$\{g\in G \;|\; g^{-1}P_J g = P_U \mbox{ for some  } U \in \mQ \}/{P_J},$$ 
	 where $P_J$ acts by left multiplication.

	Let $K = |\mathcal Q |$ denote the geometric realisation of the poset $\mathcal Q$. For a subset $\Omega \subseteq \mathcal Q$ such that $P_U=P_{U'}$ for all $U, U'\in \Omega$, define subcomplexes $K_{\Omega}$ and $K_{>\Omega}$ of $K$ as follows
	\begin{align*}
		K_{\Omega}&= \abs{ \{ V \in \mathcal Q \mid V \geqslant J \mbox{ for some } J \in \Omega\}},\\
 		K_{>\Omega}&= \abs{ \{ V \in \mathcal Q \mid V \geqslant J \mbox{ for some } J \in \Omega  \text{ and } P_V\gneq P_J\}}.
	\end{align*}
  
	The complex $K=\abs{Q}$ is an example of a \emph{panel complex} over the poset $\mQ$. For a panel complex $Y$ over $\mQ$, the \emph{Basic Construction} is a $G$-space $D(Y, G(\mathcal{Q}))$ obtained by gluing copies of $Y$ indexed by elements of $G$, according to the combinatorial information in $Y$ and $G(\mQ)$.

	We denote by $H^*_{\mF}(X; M)$ the Bredon cohomology groups of a $G$-CW-complex $X$ with respect to the family of subgroups $\mathcal F$ of $G$ with coefficients a contravariant functor $M$ from the orbit category $\orb$ to $\Z\mbox{-Mod}$. In what follows, we will restrict to coefficients $\mA_{H}=\Z[\mbox{hom}_G(-, G/H)]$ for a subgroup $H \in \mF$, and a certain refinement of $\mA_{H}$ which we denote by $\mB_{H}$. Let $\fcd G$ (resp.\ $\fgd G$) denote the Bredon cohomological (resp.\ geometric) dimension of $G$ with respect to the family $\mF$ and let $\EFG$ denote the  universal $G$-CW-complex with stabilisers in $\mF$.
	If $\mF$ is the family of all finite subgroups of $G$, then the respective notions are denoted by $\ucd G$, $\ugd G$, and $\underbar{E}G$. 

	\begin{theorem}[Theorem~\ref{thm:textrigidgen}]\label{thm:introrigidgen}
	     Let $G(\mathcal Q)$ be a strictly developable simple complex of groups with fundamental group $G$, and let $\mF$ be the family generated by local groups. Let $X=D(\D, G(\mathcal{Q}))$ be the associated Basic Construction. For $J\in \mQ$ we have
	  \begin{equation}\label{eq:introrigidzero}
	    H^*_{\mF}(X; \mB_{P_J}) \cong \bigoplus_{g\in \mathcal I_J}\bigoplus_{C_J^g\subseteq \Omega^g_J} H^*(K_{C_J^g}, K_{>C_J^g}).
	  \end{equation}
	 
	  If $G(\mathcal Q)$ is rigid and $X$ is a model for $\EFG$, then 
	  \begin{equation}\label{eq:introrigid}
	    \fcd G=\mathrm{max}\{n \in \mathbb{N} \mid H^{n}(K_C, K_{> C}) \neq 0 \text{ for some block } C \subseteq {\mathcal Q}\}.
	  \end{equation} 
	\end{theorem}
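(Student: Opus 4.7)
The plan is to compute the Bredon cochain complex $C^*_{\mF}(X; \mB_{P_J})$ directly from the $G$-CW structure of $X = D(K, G(\mQ))$, using that cells of $X$ are in bijection with pairs $(gP_V, \sigma)$ where $\sigma$ is a simplex of $K$ with minimum vertex $V \in \mQ$, and the $G$-stabiliser of such a cell is $g P_V g^{-1}$. The refinement $\mB_{P_J}$ of $\mA_{P_J}$ should be designed so that $\mB_{P_J}(G/H) \neq 0$ only when $H$ is a $G$-conjugate of $P_J$, with basis indexed by cosets $hP_J$ satisfying $hP_J h^{-1} = H$. Setting $g = h^{-1}$ and invoking the defining property of $\mathcal I_J$ as classifying the relevant left $P_J$-cosets, I would identify the basis of $C^*_{\mF}(X; \mB_{P_J})$ with the disjoint union $\bigsqcup_{g \in \mathcal I_J}\{\text{simplices of }K\text{ with minimum vertex in }\Omega^g_J\}$.

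For each fixed $g \in \mathcal I_J$, the contributing simplices are precisely those of $K_{\Omega^g_J} \setminus K_{>\Omega^g_J}$: a simplex $V_0 < \cdots < V_n$ in $K_{\Omega^g_J}$ lies outside $K_{>\Omega^g_J}$ iff $P_{V_0}$ is not strictly larger than the relevant local group, forcing $P_{V_0} = g^{-1}P_J g$ and hence $V_0 \in \Omega^g_J$. The differentials inherited from the $G$-CW structure of $X$ match those of the cellular cochain complex of the pair $(K_{\Omega^g_J}, K_{>\Omega^g_J})$, so
\begin{equation*}
H^*_{\mF}(X; \mB_{P_J}) \cong \bigoplus_{g \in \mathcal I_J} H^*(K_{\Omega^g_J}, K_{>\Omega^g_J}).
\end{equation*}
The block decomposition of $\Omega^g_J$ then splits each summand further: every contributing simplex is associated to a unique block (the block of its minimum vertex), and simplices whose minima lie in distinct blocks cannot share cofaces within $K_{\Omega^g_J}\setminus K_{>\Omega^g_J}$ without the connecting chain passing through a vertex whose stabiliser strictly dominates $P_J$. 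This yields Formula~\eqref{eq:introrigidzero}.

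For the dimension formula~\eqref{eq:introrigid}, the hypothesis $X = \EFG$ gives $H^*_{\mF}(X; M) \cong H^*_{\mF}(G; M)$ for any coefficient system $M$, so the lower bound $\fcd G \geq \max_{C}\{n : H^n(K_C, K_{>C}) \neq 0\}$ is immediate from \eqref{eq:introrigidzero} applied to representatives $P_J$ of the conjugacy classes of local groups. The matching upper bound follows by appealing to the generalised Bestvina complex construction developed elsewhere in the paper, which produces a $G$-CW model for $\EFG$ of dimension exactly $\max_C\{n : H^n(K_C, K_{>C}) \neq 0\}$. The main obstacle is the combinatorial bookkeeping in the first formula: aligning the parametrisation via $\mathcal I_J$ and the refined coefficients $\mB_{P_J}$ with the block decomposition requires care. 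For the second formula, one must invoke rigidity precisely at the point of claiming that distinct blocks correspond to distinct $G$-conjugacy classes of local groups, so that no collapse occurs between the blockwise contributions when computing $\fcd G$.
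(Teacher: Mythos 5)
Your treatment of formula~\eqref{eq:introrigidzero} is essentially sound and close to the paper's: the Yoneda-type identification of $C^*_{\mF}(X;\mB_{P_J})$ with $\bigoplus_{g\in\mathcal I_J}C^*(K_{\Omega^g_J},K_{>\Omega^g_J})$ is exactly Proposition~\ref{prop:ggchains} (the one genuinely technical point, that the Bredon coboundary matches the relative cellular coboundary, is where the strict fundamental domain is used, and you assert rather than verify it), and your cochain-level block splitting --- a surviving coface of a simplex whose minimal vertex lies in a block $C$ again has minimal vertex in $C$, since a smaller vertex with the same local group is block-equivalent --- is a correct and slightly more direct substitute for the paper's Mayer--Vietoris induction over blocks. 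The lower bound $\fcd G\geq\max_C\{n\mid H^{n}(K_C,K_{>C})\neq0\}$ then follows as you say, without rigidity.

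The gap is the upper bound. You appeal to ``the generalised Bestvina complex construction \dots which produces a $G$-CW model for $\EFG$ of dimension exactly $\max_C\{n\mid H^n(K_C,K_{>C})\neq0\}$'', but no such statement is available: for non-thin $G(\mQ)$ the Bestvina complex is built over the block poset $\mR$, and what the paper proves (Lemma~\ref{lem:homot}) is only the chain-level inequality $\fcd G\leq\dB{R}$, not that $D(B,G(\mR))$ is a model for $\EFG$; moreover $\dB{R}$ is computed from links in $\abs{\mR}$ and does not in general coincide with $\max_C\{n\mid H^n(K_C,K_{>C})\neq0\}$, because $K_C$ need not be contractible (a single block consisting of four elements $x,y<z,w$ with identical local groups has $K_C$ a $4$-cycle and $K_{>C}=\emptyset$, so $H^1(K_C,K_{>C})\neq0$ while $\dB{R}=0$), and even in the thin case the actual Bestvina complex may have dimension $3$ when the invariant equals $2$. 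You have also misplaced the role of rigidity: rigidity does not make distinct blocks correspond to distinct conjugacy classes (two incomparable vertices with equal finite local groups lie in different blocks yet have equal, hence rigid, local groups), and no such statement is needed. Where rigidity is genuinely needed --- and how the paper closes the upper bound --- is in showing that $\fcd G$ is detected by the coefficients $\mB_P$: Theorem~\ref{thm:stab} (a Schanuel-lemma argument) shows that $\fcd G$ is attained on free modules $\mA_H$ based at cell stabilisers, and Corollary~\ref{cor:B_module} then uses rigidity, via a minimal such stabiliser $P$ and the short exact sequence $0\to\mathcal{C}_P\to\mA_P\to\mB_P\to0$ (rigidity guarantees $\hom_G(G/H,G/P)=\mathrm{isom}_G(G/H,G/P)$ for $H=_GP$, so that $\mathcal{C}_P$ is a submodule covered by free modules based at proper subgroups of $P$), to conclude $H^{\fcd G}_{\mF}(X;\mB_P)\neq0$; combined with \eqref{eq:introrigidzero} this yields \eqref{eq:introrigid}. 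As written, your argument only gives $\max_C\{n\mid H^n(K_C,K_{>C})\neq0\}\leq\fcd G\leq\dB{R}$, which is strictly weaker.
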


	The rigidity assumption holds for example when the local groups are co-Hopfian, and hence in particular when they are finite. We should also remark that  the rigidity assumption on local groups in Theorem \ref{thm:introrigidgen} is not superfluous.

	Recall that an action of a group on cellular complex is  admissible, if the setwise stabiliser of each cell is also its pointwise stabiliser.  If a group $G$ acts admissibly on a simply connected cellular complex with a strict fundamental domain $Y$ then it is isomorphic to the fundamental group of a simple complex of groups formed by cells of $Y$ and their stabilisers (see Theorem~\ref{thm:scogisfund}). The following corollary of Theorem~\ref{thm:introrigidgen} is straightforward.

	\begin{corollary}
	Suppose a group $G$ acts properly and admissibly on a $\mathrm{CAT}(0)$ polyhedral complex $X$ with a strict fundamental domain $Y$. Let $\mathcal{Q}$ denote the poset of cells of $Y$ ordered by  reverse inclusion (note that we have $\abs{\mathcal Q}=K=Y'$). Then 
	  \begin{equation*}\label{eq:introrigid2}
	    \ucd 	G=\mathrm{max}\{n \in \mathbb{N} \mid H^{n}(K_C, K_{> C}) \neq 0 \text{ for some block } C \subseteq {\mathcal Q}\}.
	  \end{equation*}
	  
	\end{corollary}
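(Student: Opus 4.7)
The plan is to apply Theorem~\ref{thm:introrigidgen} with $\mathcal{F}$ taken to be the family of all finite subgroups of $G$, so that $\fcd G = \ucd G$. First I would invoke Theorem~\ref{thm:scogisfund} to promote the admissible action with strict fundamental domain $Y$ into a strictly developable simple complex of groups $G(\mathcal Q)$ whose local groups are the cell stabilisers and whose fundamental group is $G$; the poset $\mathcal Q$ of cells of $Y$ ordered by reverse inclusion gives $|\mathcal Q|=K=Y'$, and the Basic Construction $D(K,G(\mathcal Q))$ is canonically identified with the barycentric subdivision $X'$ of $X$.

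Next I would verify each hypothesis of Theorem~\ref{thm:introrigidgen}. Rigidity is immediate: since the action is proper, every local group $P_J$ is finite, hence co-Hopfian, so no conjugate of $P_J$ can be properly contained in $P_J$. To see that $X'$ is a model for $\underline{E}G$, note that $X$ is contractible (being $\mathrm{CAT}(0)$) and for any finite subgroup $H\leqslant G$ the fixed-point set $X^H$ is a non-empty $\mathrm{CAT}(0)$ subcomplex (non-empty by the Bruhat--Tits fixed-point theorem and a subcomplex by admissibility of the action), hence contractible; the same then holds for the subdivision $X'$. Finally, the family $\mathcal F$ generated by the collection $\{P_J\}_{J\in\mathcal Q}$ coincides with the family of all finite subgroups of $G$: any finite $H\leqslant G$ fixes a point in $X$ by Bruhat--Tits, and admissibility places $H$ inside the stabiliser of some cell of $X$, which is $G$-conjugate to a local group $P_J$.

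With these hypotheses in place, equation~\eqref{eq:introrigid} of Theorem~\ref{thm:introrigidgen} gives
\[
  \ucd G \;=\; \fcd G \;=\; \max\{n\in\mathbb{N}\mid H^{n}(K_C,K_{>C})\neq 0 \text{ for some block } C\subseteq\mathcal Q\},
\]
which is the desired formula.

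The steps are essentially bookkeeping once Theorem~\ref{thm:introrigidgen} and Theorem~\ref{thm:scogisfund} are available; the only point requiring a little care, and hence the main (mild) obstacle, is verifying that the family generated by the local groups genuinely captures \emph{every} finite subgroup of $G$, since otherwise Theorem~\ref{thm:introrigidgen} would only compute the Bredon dimension relative to a potentially smaller family rather than $\ucd G$ itself. This is precisely where the $\mathrm{CAT}(0)$ geometry enters through the Bruhat--Tits fixed-point theorem.
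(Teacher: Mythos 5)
Your proposal is correct and follows exactly the route the paper intends: the corollary is stated as a straightforward consequence of Theorem~\ref{thm:introrigidgen}, and you supply precisely the expected checks (Theorem~\ref{thm:scogisfund}, rigidity from finiteness of stabilisers, contractibility of fixed-point sets via the $\mathrm{CAT}(0)$ structure, and the identification of the family generated by the cell stabilisers with all finite subgroups via the Bruhat--Tits fixed-point theorem and admissibility). The only minor point you could make explicit is that the fixed-point theorem needs completeness of the $\mathrm{CAT}(0)$ metric, which holds here since the compact strict fundamental domain forces finitely many cell shapes.
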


	 This corollary is a generalisation of \cite[Theorem~1.2]{DMP} to  non-thin complexes of groups. We remark that non-thinness of a complex of groups resulting from the $G$-action on $X$ is generic, e.g.,\ in many cases in order to obtain an admissible action one takes the barycentric subdivision which results in a non-thin complex.\medskip
	 
	 To obtain formula~\eqref{eq:introrigid} of Theorem~\ref{thm:introrigidgen}, we prove the following general result.

	\begin{theorem}[Theorem~\ref{thm:stab}]\label{intro:stab} 
	  Let $G$ be a group and $\mF$ be a family of subgroups. Suppose that $X$ is a cocompact model for $\EFG$. Then 
	  $$\fcd G=\mathrm{max}\{k \in \mathbb{N} \mid H^{k}_{\mF}(X, {\mA}_H) \neq 0  \text{ for some cell stabiliser } H \}.$$
	  Moreover, if $H^{n}_{\mF}(G; \mA_L) \neq 0$ for $n=\fcd G$ and $L\in \mF$, then there exists a cell stabiliser $H\leqslant L$ such that
	  $H^{n}_{\mF}(G; \mA_H) \neq 0$.
	\end{theorem}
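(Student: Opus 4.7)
The plan is to use the cellular Bredon chain complex $C_*(X^?)$ of the cocompact model $X$: by cocompactness, this is a finitely generated free resolution of the constant functor $\underline{\Z}$ in $\orbmod$, with each term $C_k(X^?) = \bigoplus_\sigma \mA_{G_\sigma}$ summed over finitely many orbit representatives $\sigma$ of $k$-cells. By the Yoneda lemma, $\hom_{\orb}(\mA_K, M) = M(G/K)$, so the cochain complex $\hom_{\orb}(C_*(X^?), M) = \bigoplus_\sigma M(G/G_\sigma)$ has finite rank in each degree and computes $H^*_{\mF}(X; M) = H^*_{\mF}(G; M)$.

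The inequality $\fcd G \geq \max\{k : H^{k}_{\mF}(X; \mA_H) \neq 0 \text{ for some cell stabilizer } H\}$ is immediate, since each cell stabilizer lies in $\mF$ and $\fcd G$ is the supremum over all coefficient modules. For the reverse inequality I reduce to the moreover statement: given $n = \fcd G$ and any $M$ with $H^n_{\mF}(G; M) \neq 0$, choose a surjection $F = \bigoplus_{L \in \mF} \mA_L \twoheadrightarrow M$ from a direct sum of representables, and use the long exact sequence (together with the vanishing $H^{n+1}_{\mF}(G; -) = 0$ coming from $n = \fcd G$) to conclude that $H^n_{\mF}(G; F) \neq 0$. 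Finite rank of the cochain complex in each degree implies that $H^n_{\mF}(G; -)$ commutes with arbitrary direct sums in the coefficient, so $H^n_{\mF}(G; \mA_L) \neq 0$ for some $L \in \mF$. Applying the moreover part then produces a cell stabilizer $H \leq L$ with $H^n_{\mF}(G; \mA_H) \neq 0$, completing the main formula.

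For the moreover, represent a non-trivial class $[\phi] \in H^n_{\mF}(G; \mA_L)$ by a cocycle $\phi \in \bigoplus_\sigma \mA_L(G/G_\sigma) = \bigoplus_\sigma \Z[(G/L)^{G_\sigma}]$, and decompose it into atomic pieces. Each basis element $(\sigma, gL)$ satisfies $g^{-1}G_\sigma g \leq L$, and the corresponding Bredon map $\mA_{G_\sigma} \to \mA_L$ factors as $\mA_{G_\sigma} \xrightarrow{\cong} \mA_{H_{\sigma, g}} \xrightarrow{\iota} \mA_L$, where $H_{\sigma, g} := g^{-1}G_\sigma g$ is the stabilizer of $g^{-1}\sigma$, itself a cell stabilizer contained in $L$. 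Grouping by the resulting cell stabilizer yields a finite decomposition $\phi = \sum_H \iota_H \circ \phi^{(H)}$ with $\phi^{(H)} : C_n \to \mA_H$. The main obstacle is that the $\phi^{(H)}$ are not individually cocycles: the cocycle condition on $\phi$ only forces $\sum_H \iota_H \circ (\phi^{(H)} \circ d) = 0$ in $\mA_L$, and since the inclusions $\iota_H$ need not be injective, contributions from different $H$ can cancel at the level of $\mA_L$ without cancelling in the individual $\mA_H$. Extracting a cell stabilizer $H \leq L$ with $[\phi^{(H)}] \neq 0$ therefore requires a careful combinatorial argument tracking the atomic coset data through the cocycle and coboundary relations, exploiting the cocompactness of $X$ and possibly invoking a refinement analogous to the coefficient $\mB_H$ to separate the $H$-components.
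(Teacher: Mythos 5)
Your reduction of the main formula to the ``moreover'' statement is fine and matches the paper's Proposition~\ref{prop:free}: the cochain complex $\mathrm{Hom}_{\mF}(C^{\mF}_{\ast}(X),-)$ has finite rank in each degree, so $H^n_{\mF}(G;-)$ commutes with direct sums, and a free cover $\bigoplus \mA_L \twoheadrightarrow M$ together with the vanishing of $H^{n+1}_{\mF}(G;-)$ yields some $L\in\mF$ with $H^n_{\mF}(G;\mA_L)\neq 0$. The genuine gap is the ``moreover'' step itself, which is the actual content of the theorem and which you leave unproved: you decompose a cocycle $\phi\in\bigoplus_\sigma \Z[(G/L)^{G_\sigma}]$ as $\sum_H \iota_H\circ\phi^{(H)}$ over cell stabilisers $H\leqslant L$, correctly observe that the $\phi^{(H)}$ need not be cocycles because the maps $\mA_H\to\mA_L$ are not injective, and then defer to ``a careful combinatorial argument.'' That argument is exactly what is missing, and the obstacle is worse than you state: even if some $\phi^{(H)}$ were a cocycle, you would still have to rule out that it is a coboundary in degree $n<\dim X$, and nothing in your atomic decomposition controls this.

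The paper circumvents both problems at once by \emph{not} working in the middle of the cellular cochain complex. Since $n=\fcd G$, the kernel $P=\ker\{C^{\mF}_{n-1}(X)\to C^{\mF}_{n-2}(X)\}$ is projective, and the Bredon version of Schanuel's Lemma (applied to this truncation and the full cellular resolution) produces a finitely generated free module $F$, based at cell stabilisers, with $P\oplus F$ free and again based at cell stabilisers. One thus gets a free resolution $D^{\mF}_{\ast}$ of $\underline{\Z}$ of length exactly $n$ whose generators in every degree are of the form $\mA_{G_\sigma}$ for cell stabilisers $G_\sigma$. In the truncated complex, $H^n_{\mF}(-,\mA_L)\neq 0$ is equivalent to the top coboundary $\delta^L_{\mF}\colon\mathrm{Hom}_{\mF}(D^{\mF}_{n-1},\mA_L)\to\mathrm{Hom}_{\mF}(D^{\mF}_{n},\mA_L)$ failing to be surjective, so some single free generator $(G/G_\sigma\xrightarrow{x}G/L)$ lies outside the image. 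Setting $H=x^{-1}G_\sigma x\leqslant L$ (a cell stabiliser), the induced map of cochain complexes $\Delta_{\ast}\colon\mathrm{Hom}_{\mF}(D^{\mF}_{\ast},\mA_H)\to\mathrm{Hom}_{\mF}(D^{\mF}_{\ast},\mA_L)$ sends $(G/G_\sigma\xrightarrow{x}G/H)$ to that generator, and the commutativity $\delta^L_{\mF}\circ\Delta_{n-1}=\Delta_n\circ\delta^H_{\mF}$ shows $(G/G_\sigma\xrightarrow{x}G/H)$ is not in the image of $\delta^H_{\mF}$, whence $H^n_{\mF}(X;\mA_H)\neq 0$. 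Note that non-injectivity of $\mA_H\to\mA_L$ is irrelevant here: one only needs that a specific generator upstairs maps to the bad generator downstairs. If you want to salvage your approach, this truncation-plus-Schanuel step is the idea you need to add; without it (or the paper's separate $\mB_H$-machinery, which serves a different purpose), your decomposition does not yield the conclusion.
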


	  Note that, under the assumptions of the theorem, there are only finitely many conjugacy classes of stabilisers. Thus the theorem reduces the computation of the Bredon cohomological dimension of a given group into a computation of finitely many cohomology groups. Theorem~\ref{intro:stab}  together with \cite[Theorem~2.4]{DMP} give  us the following strengthening of \cite[Theorem~1.1]{DMP}.

	\begin{corollary}[Corollary~\ref{cor:support}]\label{cor:compactsupport}
		Let $X$ be a $G$-CW-complex that is a cocompact model for $\eg$. Then $$\ucd G=\mathrm{max}\{k \in \mathbb{N} \mid H^{k}_{c}(X^H, X_{\mathrm{sing}}^H) \neq 0  \text{ for some cell stabiliser } H \},$$
		where $X_{\mathrm{sing}}^H \subseteq X^H$ consists of all points whose stabiliser strictly contains $H$.
	\end{corollary}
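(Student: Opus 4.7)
The plan is to deduce this corollary by combining the general Bredon-cohomology criterion of Theorem~\ref{intro:stab} with the identification of certain Bredon cohomology groups as compactly supported cohomology groups provided by \cite[Theorem~2.4]{DMP}. Specialising Theorem~\ref{intro:stab} to the case when $\mF$ is the family of finite subgroups of $G$ gives $\eg=\EFG$ and $\ucd G=\fcd G$, so that
\[
   \ucd G=\mathrm{max}\{k\in\mathbb{N}\mid H^{k}_{\mF}(X;\mA_{H})\neq 0\text{ for some cell stabiliser }H\}.
\]
Since $X$ is a cocompact model for $\eg$, all cell stabilisers are finite, and there are only finitely many conjugacy classes of them.

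Next, I would invoke \cite[Theorem~2.4]{DMP}, which provides, for each cell stabiliser $H$, a natural isomorphism
\[
   H^{k}_{\mF}(X;\mA_{H})\;\cong\;H^{k}_{c}(X^{H},X^{H}_{\mathrm{sing}}),
\]
using only cocompactness of $X$ and the definition $\mA_{H}=\mathbb{Z}[\mathrm{hom}_{G}(-,G/H)]$. Substituting this identification into the formula above, the right-hand sides of the two expressions for $\ucd G$ are taken over the same finite collection of conjugacy classes of cell stabilisers, yielding the desired equality.

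The only non-routine point is making sure that the cell stabilisers appearing in the maximum really do coincide on the two sides, i.e.\ that the isomorphism from \cite[Theorem~2.4]{DMP} is compatible with the indexing by cell stabilisers used in Theorem~\ref{intro:stab}. This is clear once one unpacks the definitions of $\mA_{H}$ and of $X^{H}_{\mathrm{sing}}$: both sides record information at the fixed-point set $X^{H}$ modulo the contributions coming from strictly larger stabilisers. Therefore the two maxima are attained at the same value of $k$, giving the corollary. The main (mild) obstacle is thus purely bookkeeping, ensuring that the "for some cell stabiliser $H$" condition transports correctly through the DMP isomorphism; no new computation is required beyond the two cited results.
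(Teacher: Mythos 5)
There is a genuine gap, and it is exactly at the step you dismiss as bookkeeping. The isomorphism you attribute to \cite[Theorem~2.4]{DMP} does not hold with the free coefficients $\mA_{H}=\Z[\mathrm{hom}_G(-,G/H)]$: for those coefficients the Bredon cochain complex $\mathrm{Hom}_{\mF}(C^{\mF}_*(X),\mA_H)$ records, in each degree, \emph{all} cells whose stabiliser is subconjugate into $H$ (indeed $\mathrm{Hom}_{\mF}(\mA_{G_\sigma},\mA_H)\cong\Z[\{xH: x^{-1}G_\sigma x\leqslant H\}]$), whereas $C^*_c(X^H,X^H_{\mathrm{sing}})$ only sees the cells with stabiliser \emph{exactly} $H$. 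These genuinely differ in cohomology. For example, let $G=D_\infty$ act on $X=\mathbb{R}$ with vertices at the integers and let $H$ be a vertex stabiliser: then $X^H$ is a point, so $H^1_c(X^H,X^H_{\mathrm{sing}})=0$, while a direct computation of the cochain complex $0\to\Z\to\Z[G/H]\to 0$ gives $H^1_{\mF}(X;\mA_H)\cong\Z[G/H]/\Z\neq 0$. So your claim that ``both sides record information at $X^H$ modulo the contributions from strictly larger stabilisers'' is false for $\mA_H$; the module with that property is the refinement $\mB_H$ (which vanishes off the conjugacy class of $H$ and keeps only the isomorphisms), and it is with these coefficients that the identification $H^*_{\mF}(X;\mB_H)\cong H^*_c(X^H,X^H_{\mathrm{sing}})$ used from \cite{DMP} is valid.

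Consequently Theorem~\ref{intro:stab} alone is not enough: you must first pass from the $\mA_H$-statement of Theorem~\ref{thm:stab} to the corresponding statement with $\mB_H$ coefficients, which is the content of Corollary~\ref{cor:B_module}. That passage is a real argument, not indexing: it uses the short exact sequence $0\to\mathcal C_P\to\mA_P\to\mB_P\to 0$, a minimal choice of cell stabiliser $P$ obtained by iterating Theorem~\ref{thm:stab}, and the rigidity of the collection of cell stabilisers (automatic here, since the stabilisers are finite), to conclude that the top-degree nonvanishing survives the quotient from $\mA_P$ to $\mB_P$. The paper's proof of this corollary is precisely ``combine Corollary~\ref{cor:B_module} with \cite[Theorem~2.4]{DMP}''; your proposal omits the $\mB$-module step and substitutes for it an isomorphism that fails in general, so as written the argument does not go through.
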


	 Another application of Theorem~\ref{thm:introrigidgen} is the construction of new counterexamples to the strong form of Brown's conjecture via the notion of \emph{reflection-like actions}. Here the removal of thinness assumption is the key to obtaining a systematic approach to constructing such examples. Reflection-like actions are generalisations of groups acting by reflections on Euclidean spaces.  The precise definition  and examples can be found in  Section~\ref{sec:reflikeactions}.

	\begin{theorem}[Theorem~\ref{thm:reflikemain}]\label{thm:intoreflikemain}
	  Let $F$ be a finite group admitting a reflection-like action on a compact, connected,  flag simplicial complex $L$ of dimension $n \geqslant 1$. Let $W_L$ be the right-angled Coxeter group associated to $L$ and $G=W_L\rtimes F$ be the associated semi-direct product. Suppose that $\nH^n(L)=0$. Then  
	  \begin{equation*}
	    \vcd G\leqslant n \quad \text{and} \quad \ucd G=n+1.
	  \end{equation*}
	\end{theorem}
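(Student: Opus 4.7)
The plan is to realise $G$ as the fundamental group of a strictly developable rigid simple complex of groups arising from its action on the Davis complex $\Sigma_{W_L}$, and to combine Theorem~\ref{thm:introrigidgen} with standard Davis--Coxeter bounds. The reflection-like $F$-action on $L$ extends canonically to a cellular action of $F$ on $\Sigma_{W_L}$ compatible with the standard $W_L$-action, yielding a proper, cocompact action of $G = W_L \rtimes F$ on $\Sigma_{W_L}$. Passing to the barycentric subdivision to secure admissibility, this action admits a strict fundamental domain $Y$, and Theorem~\ref{thm:scogisfund} identifies $G$ with the fundamental group of the associated simple complex of groups $G(\mathcal Q)$ over the poset $\mathcal Q$ of cells of $Y$. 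Every local group is finite (an extension of a subgroup of $F$ by a special subgroup of $W_L$) and hence co-Hopfian, so $G(\mathcal Q)$ is rigid; and since $\Sigma_{W_L}$ is $\mathrm{CAT}(0)$ with finite stabilisers, it is a cocompact model for $\EFG$ with $\mF$ the family of finite subgroups of $G$.

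For the bound $\vcd G \leqslant n$, since $W_L$ has index $|F|$ in $G$, we have $\vcd G = \vcd W_L$. The Davis formula for the right-angled Coxeter group $W_L$ with flag nerve $L$ of dimension $n$ expresses $H^{*}(W_L; \Z W_L)$ as a direct sum of reduced cohomologies of subcomplexes of $L$; the only possible contribution in degree $n+1$ is $\widetilde{H}^{n}(L)$, which vanishes by hypothesis, so $\vcd W_L \leqslant n$. The upper bound $\ucd G \leqslant n+1$ is then immediate from $\dim \Sigma_{W_L} = n+1$ together with the identification of $\Sigma_{W_L}$ as a model for $\EFG$.

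The lower bound $\ucd G \geqslant n+1$ is the heart of the argument. I would apply Theorem~\ref{thm:introrigidgen}: it suffices to locate a block $C \subseteq \mathcal Q$ with $H^{n+1}(K_C, K_{>C}) \neq 0$. In the thin case where $F$ is trivial, the only block that could contribute in top degree is the singleton block of the chamber $\emptyset$, whose relative cohomology equals $\widetilde H^{n}(L) = 0$. The reflection-like $F$-action enriches the block structure: the block $C$ corresponding to a top-dimensional stratum fixed by a proper subgroup $F_0 \leqslant F$ (arising from the reflection-like axioms) is genuinely non-singleton, and a combinatorial identification of the pair $(K_C, K_{>C})$ with a sub-chamber relative to its $W_L$-walls furnishes a non-zero class in degree $n+1$, producing the desired gap and hence the counterexample to the strong form of Brown's conjecture.

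The main obstacle is this final step: exhibiting the block $C$ and verifying the non-vanishing of $H^{n+1}(K_C, K_{>C})$ \emph{precisely when} $\widetilde H^{n}(L) = 0$. The mechanism must come from the non-thinness of $G(\mathcal Q)$ induced by the reflection-like action — blocks consisting of several cells with a common stabiliser contribute one additional degree of relative cohomology compared to the single-cell blocks of the thin case. Making this precise requires unpacking the reflection-like axioms of Section~\ref{sec:reflikeactions} to locate the enlarged block and to control the homotopy types of $K_C$ and $K_{>C}$; by contrast, the two upper bounds and the realisation as a simple complex of groups are essentially routine assemblies of the machinery developed earlier in the paper.
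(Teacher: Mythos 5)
Your framework (realising $G$ as the fundamental group of the rigid simple complex of groups coming from the $G$-action on $\Sigma_{W_L}$, noting that $\Sigma_{W_L}$ is a cocompact model for $\eg$, and reducing the lower bound to finding a block $C$ with $H^{n+1}(K_C,K_{>C})\neq 0$ via Theorem~\ref{thm:textrigidgen}) is exactly the paper's, but the step you defer as ``the main obstacle'' is the actual content of the theorem, and it is not proved in your proposal — and your sketch of it points in a slightly wrong direction. The non-vanishing of $H^{n+1}(K_C,K_{>C})$ has nothing to do with the hypothesis $\rH^{n}(L)=0$ (that hypothesis is used only for $\vcd G\leqslant n$); it holds unconditionally for any reflection-like action. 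The paper's mechanism is in two steps. First (Proposition~\ref{prop:coxeterisreflike}), the $G$-action on $\Sigma_{W_L}$ is itself reflection-like, with strict fundamental domain the cone $C(Y')\cong B^{n+1}$: interior points of the cone have stabiliser $F_0$, while points of the base acquire at least one Coxeter generator, points on the sides have the $F$-stabilisers of boundary points of $Y$, and the cone point has stabiliser $F$ — all strictly containing $F_0$. Second (Lemma~\ref{lem:reflikechains}), for any reflection-like action the block $C$ consisting of \emph{all} cells whose stabiliser is exactly $F_0$ — i.e.\ all cells meeting the interior of the ball, in every dimension, not just a ``top-dimensional stratum'' — satisfies $K_C=K\cong B^{n+1}$ and $K_{>C}\cong \partial B^{n+1}\cong S^{n}$, so $H^{n+1}(K_C,K_{>C})\cong\Z\neq 0$; Theorem~\ref{thm:textrigidgen} (rigidity holds since local groups are finite) then gives $\ucd G\geqslant n+1$, and $\dim\Sigma_{W_L}=n+1$ gives the reverse inequality. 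Filling your gap amounts to proving these two statements, which is the real work of Section~\ref{sec:reflikeactions}; there is no need to relate $(K_C,K_{>C})$ to ``a sub-chamber relative to its $W_L$-walls'' or to the vanishing of $\rH^{n}(L)$.

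A smaller point: in the bound $\vcd G\leqslant n$ your assertion that in the Davis formula ``the only possible contribution in degree $n+1$ is $\rH^{n}(L)$'' is not literally correct, since the formula involves $\rH^{n}$ of various proper (full) subcomplexes of $L$, which can still be $n$-dimensional. This is repairable: because $L$ has no $(n+1)$-cells, $H^{n+1}$ of any pair $(L,L_0)$ vanishes, so restriction $H^{n}(L)\to H^{n}(L_0)$ is onto for every subcomplex $L_0$ and all those terms vanish once $H^{n}(L)=0$. Alternatively, argue as the paper does (Lemma~\ref{lem:reflikevirtualdim}) via Dranishnikov's criterion: links of non-empty simplices have dimension at most $n-1$, and the link of the empty simplex is $L$ itself.
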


	We refer the reader to Examples \ref {ex:dihedralmoore}, \ref{ex:twocoprimemoorespaces} for a specific construction of complexes $L$ satisfying the hypothesis of Theorem \ref{thm:intoreflikemain} via products of dihedral group actions on 2-dimensional Moore spaces.

	Observe that as long as the complex of groups $G(\mQ)$ is thin, Theorem~\ref{thm:introrigidgen} implies that the Bredon cohomological dimension of $G$ depends only on the poset structure of $\mathcal Q$. We show that for a strictly developable thin simple complex of groups, there is a model for $\EFG$ of the smallest possible dimension and a simple cell structure. The model is given as the Basic Construction where one replaces panel complex $K$ with the so-called \emph{Bestvina complex} $B$.

	\begin{theorem}\label{thm:intromain} Let $G(\mathcal Q)$ be a strictly developable thin complex of groups over a poset $\mathcal{Q}$ with fundamental group $G$ and  let $\mF$ be the family generated by the local groups. Then 
	  \begin{enumerate}[label=(\roman*)]
	    
	    \item \label{it:HoeqPlusFormulaCohomology} the standard development $D(\D, G(\mathcal{Q}))$  and the Bestvina complex $D(\B, G(\mathcal{Q}))$ are $G$-homotopy equivalent, and
	    $$H^*_{\mF}(D(\D, G(\mathcal{Q})); \mB_{P_J}) \cong \bigoplus_{g\in \mathcal I_J}\bigoplus_{U\in \Omega^g_J} \rH^{*-1}(K_{>U}).$$

	    \item \label{it:mainintroCohDim} if  $D(\D, G(\mathcal{Q}))$  is a model for $\EFG$, then $D(\B, G(\mathcal{Q}))$ is a cocompact model for $\EFG$ satisfying
	    \[\mathrm{dim}(D(B, G(\mathcal{Q}))) = \left\{ 
	      \begin{array}{lcr}
	        \fcd G		  & \text{ if } & \fcd G \neq 2\\
	        2 \text{ or } 3 & \text{ if } & \fcd G =2\\

	      \end{array} \right.\]
	    and 
	      \begin{equation}\label{eq:introdim}
	        \fcd G=\mathrm{max}\{n \in \mathbb{N} \mid \rH^{n-1}(K_{> J}) \neq 0 \text{ for some } J \in \mathcal Q \}.
	      \end{equation}
	    
	    \end{enumerate}

	\end{theorem}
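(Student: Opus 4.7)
The approach is to specialise Theorem~\ref{thm:introrigidgen} to the thin setting, where the combinatorics collapse considerably, and then construct a lower-dimensional model by performing equivariant collapses on the panel complex $K$.

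For part (i), the cohomology formula will follow from Theorem~\ref{thm:introrigidgen} together with two elementary observations. First, thinness forces the block-generating relation---$J' \sim J$ whenever $J' \leqslant J$ and $P_{J'} \to P_J$ is an isomorphism---to be the identity relation, so every block $C_J^g$ is a singleton $\{U\}$ and the outer sum over blocks in \eqref{eq:introrigidzero} collapses to a sum over $U \in \Omega^g_J$. Second, $K_U$ is the realisation of the subposet $\{V \in \mQ : V \geqslant U\}$, which has $U$ as its unique minimum, so $K_U$ is a cone and hence contractible. The long exact sequence of the pair $(K_U, K_{>U})$ then gives $H^n(K_U, K_{>U}) \cong \rH^{n-1}(K_{>U})$, yielding the displayed isomorphism. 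The $G$-homotopy equivalence of the developments will be obtained by constructing $B$ from $K$ as a sequence of equivariant collapses of contractible subcomplexes of $K$ corresponding to \emph{redundant} panels, generalising Bestvina's original construction for Coxeter groups and the thin-finite-local-group version of \cite{PePry}. The key point is that each collapse is determined by purely combinatorial data of $\mQ$ and the $P_J$, so by functoriality of the Basic Construction each collapse lifts to an equivariant collapse of $D(K, G(\mQ))$ onto $D(B, G(\mQ))$.

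For part (ii), the dimension formula \eqref{eq:introdim} will be established by bounding $\fcd G$ from below using the cohomology computation of (i) together with Theorem~\ref{intro:stab} applied to the cocompact model $D(K, G(\mQ))$, and from above by the dimension of the Bestvina model $D(B, G(\mQ))$. The construction of $B$ will be tuned so that its dimension equals the right-hand side of \eqref{eq:introdim} outside of the Eilenberg--Ganea window; in the case $\fcd G = 2$ the dimension of $D(B, G(\mQ))$ is $2$ or $3$, matching the familiar obstruction. Cocompactness and the stabiliser structure are inherited from $D(K, G(\mQ))$, since $B$ retains the panel structure of $K$, so $D(B, G(\mQ))$ is again a cocompact model for $\EFG$.

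The hard part will be the construction of $B$ and the verification that its standard development realises the predicted dimension. One must check that the equivariant collapses can be performed in a way that is compatible with the panel structure, that the resulting $D(B, G(\mQ))$ is a cocompact $G$-CW-complex with the same family of cell stabilisers as $D(K, G(\mQ))$, and that its dimension matches the cohomological lower bound. A subsidiary but essential point is translating the $\mB_{P_J}$-coefficient computation in (i) into a statement about $\mA_H$-coefficients for an honest cell stabiliser $H$, so that Theorem~\ref{intro:stab} delivers a lower bound on $\fcd G$ of precisely the form required by \eqref{eq:introdim}.
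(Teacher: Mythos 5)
Your specialisation of Theorem~\ref{thm:textrigidgen} for the cohomology formula in (i) is exactly the paper's argument: thinness makes every block a singleton, $K_U=\abs{\mQ_{\geqslant U}}$ is a cone and hence contractible, and the long exact sequence of $(K_U,K_{>U})$ converts the relative groups into $\rH^{*-1}(K_{>U})$. The genuine gap is your route to the $G$-homotopy equivalence of $D(\D, G(\mathcal{Q}))$ and $D(\B, G(\mathcal{Q}))$ via ``a sequence of equivariant collapses''. The Bestvina panel complex $\B$ is not a subcomplex of $K$: its panels are abstract compact contractible polyhedra of minimal dimension containing the links $\B_{>J}$, and they need not embed in the corresponding panels of $K$, so there is no collapse (or quotient) of $K$ producing $\B$ in general. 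Moreover, even granting a collapsing scheme on the fundamental domain, free faces lying on shared panels are not free in the development, so collapses do not automatically lift equivariantly; realising $D(\B, G(\mathcal{Q}))$ as an equivariant deformation retract (spine) of $D(\D, G(\mathcal{Q}))$ is exactly what the paper records as open in general (Question~\ref{ques:spine}), known only when $\fcd G\leqslant 1$ (Theorem~\ref{thm:deformationretraction}) or under the subconical hypothesis of \cite{PePry}. The paper instead invokes Proposition~\ref{prop:genhomotop} (Lemmas~2.4--2.5 of \cite{PePry}): one builds panel-preserving maps between the two Basic Constructions and checks they are $G$-homotopy equivalences using contractibility of panels and the equivariant Whitehead theorem. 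Your proposal would prove a strictly stronger statement than is currently known, so as written this step fails.

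In part (ii) there are two further issues, one cosmetic and one substantive. Cosmetically, the translation of the $\mB_{P_J}$-computation into $\mA_H$-coefficients in order to feed Theorem~\ref{thm:stab} is unnecessary: once $D(\D, G(\mathcal{Q}))$ is a model for $\EFG$, nonvanishing of $H^{n}_{\mF}$ of it with \emph{any} coefficient module (in particular $\mB_{P_J}$) already gives $\fcd G\geqslant n$ by the definition of $\fcd$; Theorem~\ref{thm:stab} is used in the paper for the rigid case of Theorem~\ref{thm:textrigidgen}, not here. Substantively, your upper bound is only $\fcd G\leqslant \dim D(\B, G(\mathcal{Q}))$, and when the right-hand side of \eqref{eq:introdim} equals $2$ the complex $\B$ may be $3$-dimensional, so your argument yields only $2\leqslant\fcd G\leqslant 3$ and does not establish the equality \eqref{eq:introdim} in that case. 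The paper closes this case at the chain level: the acyclic Bestvina complex $B^{\Z}$ always has dimension $\dB{Q}$ (Proposition~\ref{prop:gendimbest}), its Bredon chain complex is chain homotopy equivalent to that of $D(\D, G(\mathcal{Q}))$ (Proposition~\ref{prop:genhomotop}, Lemma~\ref{lem:homot}), giving $\fcd G\leqslant\dB{Q}$ unconditionally, and the lower bound comes from the top-degree epimorphism of Proposition~\ref{prop:chains} as in Proposition~\ref{prop:partial}. You need some substitute for this $B^{\Z}$ chain-level step to recover \eqref{eq:introdim} when $\dB{Q}=2$.
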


	Since buildings are CAT(0) and chamber transitive actions on them are thin (see Lemma~\ref{lem:buildingmin}), they are ideally suited for applying Theorem~\ref{thm:intromain}.

	\begin{corollary} \label{cor:introbuilding}
	  Let $G$ be a group acting chamber transitively on a building  of type $(W,S)$. Let $G(\mQ)$ be the associated simple complex of groups and let $\mF$ be the family generated by the stabilisers. Then $D(B, G(\mathcal{Q}))$ is a realisation of the building (and thus a cocompact model for $\EFG$) of dimension

	\[\mathrm{dim}(D(B, G(\mathcal{Q}))) = \left\{ \begin{array}{lcr}
	      \vcd W         & \text{ if } & \vcd W\neq 2,\\
	      2 \text{ or } 3 & \text{ if } & \vcd W=2. \\

	      \end{array} \right.\]
	      Moreover,  $$H^*_{\mF}(G; \mB_{P_J}) \cong \bigoplus_{g\in \mathcal I_J}\bigoplus_{U\in \Omega^g_J} \rH^{*-1}(K_{>U})$$
	      and $$\fcd G=\vcd W= \mathrm{max}\{n \in \mathbb{N} \mid \rH^{n-1}(K_{> J}) \neq 0 \text{ for some } J \in \mathcal Q \}.$$
	\end{corollary}
	
	The formula for Bredon cohomological dimension in Corollary \ref{cor:introbuilding}  extends \cite[Corollary~1.4]{DMP} from finite to arbitrary stabilisers. As a consequence of Corollary~\ref{cor:introbuilding}, we obtain one of the main results of \cite{Harland}.

  \begin{corollary}[Corollary~\ref{cor:textbuilding_3}]\label{cor:introbuilding_3} 
	  Let $G$ be a virtually torsion-free group acting chamber transitively on a building of type $(W,S)$. Then $$\vcd G\leqslant \vcd W + \max \{\vcd P \;|\; P \mbox{ is a special parabolic subgroup of  } G\}.$$
	\end{corollary}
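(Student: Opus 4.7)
The plan is to pass to a torsion-free finite-index subgroup and then apply an isotropy (equivariant cohomology) spectral sequence to the $G$-CW complex $X = D(\B, G(\mQ))$ provided by Corollary~\ref{cor:introbuilding}. Since $G$ is virtually torsion-free, fix a torsion-free subgroup $G_0 \leqslant G$ of finite index; then $\vcd G = \cd G_0$. Restricting the $G$-action to $G_0$ exhibits $X$ as a cocompact $G_0$-CW complex, and since $X$ realises the building, it is $\mathrm{CAT}(0)$, hence contractible.

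Next I would control the $G_0$-stabilisers of cells. By the chamber-transitive setup of Corollary~\ref{cor:introbuilding}, every cell stabiliser of the $G$-action on $X$ is a $G$-conjugate of some special parabolic $P_J$, so each $G_0$-cell-stabiliser $G_{0,\sigma}$ is a torsion-free subgroup of finite index in some $gP_Jg^{-1}$. Consequently
$$\cd G_{0,\sigma} \;\leqslant\; \vcd(gP_Jg^{-1}) \;=\; \vcd P_J \;\leqslant\; d, \qquad d := \max\{\vcd P : P \text{ is a special parabolic of } G\}.$$

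The core step is the isotropy spectral sequence for the contractible cocompact $G_0$-complex $X$:
$$E_1^{p,q} \;=\; \prod_{\sigma \in \Sigma^p/G_0} H^q(G_{0,\sigma};\, \Z G_0) \;\Longrightarrow\; H^{p+q}(G_0;\, \Z G_0).$$
The $E_1$-term vanishes for $p > \dim X$ by cellular degree and for $q > d$ by the stabiliser bound above, so $H^{p+q}(G_0; \Z G_0) = 0$ whenever $p + q > \dim X + d$. Combining this with $\dim X = \vcd W$ from Corollary~\ref{cor:introbuilding} yields
$$\vcd G \;=\; \cd G_0 \;\leqslant\; \vcd W + d,$$
which is the stated inequality.

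The only subtlety, and the main technical point, is the case $\vcd W = 2$, in which Corollary~\ref{cor:introbuilding} guarantees only $\dim X \in \{2,3\}$. If $\dim X = 2$ the argument above is complete; if $\dim X = 3$ one needs extra care, for instance by exploiting $\fcd G = \vcd W = 2$ to replace $X$ with a $2$-dimensional cohomological model, or by showing directly that the top-dimensional cellular contribution to $H^*(G_0; \Z G_0)$ is forced to vanish. Everything else in the proof reduces to routine bookkeeping with the building realisation and the stabiliser identification coming from Corollary~\ref{cor:introbuilding}.
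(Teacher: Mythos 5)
Your route is genuinely different from the paper's. The paper obtains the corollary purely algebraically: Corollary~\ref{cor:textbuilding} together with Proposition~\ref{prop:bredondimforfiniteandtorsionfree}, whose proof combines the inequality $\ucd G\leqslant \fcd G+\max\{\ucd F\mid F\in\mF\}$ of \cite{DPT} (and, after passing to a torsion-free finite-index subgroup, its analogue for ordinary cohomological dimension) with Lemma~\ref{lem:homot}, which bounds $\fcd G$ by $\dB{Q}=\vcd W$ via the chain-homotopy comparison with the \emph{acyclic} Bestvina complex $B^{\Z}$. You instead run the isotropy spectral sequence for the torsion-free finite-index subgroup $G_0$ acting on the geometric model $D(\B, G(\mathcal{Q}))$. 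This is a perfectly reasonable, more geometric argument (closer in spirit to Harlander's original proof), but it trades the bound by $\fcd G$ for a bound by the \emph{dimension} of the model, and that is exactly where your difficulties come from.

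Three points need repair. First, and most substantively, the case $\vcd W=2$ with $\dim D(\B,G(\mathcal{Q}))=3$ is left open in your write-up; "replace $X$ by a $2$-dimensional cohomological model" is the right instinct but is not carried out. The concrete fix inside the paper's framework is to run your spectral sequence over $D(B^{\Z},G(\mathcal{Q}))$ instead: by Proposition~\ref{prop:genhomotop}(2) its Bredon chain complex is chain homotopy equivalent to that of the standard development, so in particular the space is acyclic (acyclicity is all the spectral sequence needs), its cell stabilisers are again conjugates of the parabolics, and by Proposition~\ref{prop:gendimbest} its dimension equals $\dB{Q}=\vcd W$ in \emph{every} case, with no exceptional value at $2$. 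Second, your justification of contractibility is wrong as stated: it is not known that the Bestvina realisation of the building is $\mathrm{CAT}(0)$ (this is raised as an open question in Section~\ref{sec:questions}); what you actually need is only that $D(\B,G(\mathcal{Q}))$ is contractible, which follows since it is a model for $\EFG$ and the trivial subgroup lies in $\mF$. Third, taking coefficients $\Z G_0$ is not quite enough: vanishing of $H^{n}(G_0;\Z G_0)$ above degree $\vcd W+d$ bounds $\cd G_0$ only under additional finiteness hypotheses (the parabolics, hence $G_0$, need not be of type $FP_{\infty}$, and $\cd G_0<\infty$ is not known a priori). The clean fix is to run the same spectral sequence with an arbitrary $G_0$-module $M$; the $E_1$-page still vanishes for $q>d$ because every cell stabiliser $G_{0,\sigma}$ is a torsion-free finite-index subgroup of a conjugate of some $P_J$, so $H^{n}(G_0;M)=0$ for all $M$ whenever $n>\vcd W+d$, which gives $\cd G_0\leqslant \vcd W+d$ directly. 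With these three adjustments your argument is complete and gives an alternative proof of the corollary.
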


  We point out that in \cite[Theorem 2.8]{Harland} it is proven that, under the assumptions of Theorem \ref{thm:intromain}, the dimension of Bestvina complex is minimal among $G$-complexes which admit a strict fundamental domain with all acyclic panels. Theorem \ref{thm:intromain}\ref{it:mainintroCohDim} is stronger, as it states that the dimension of the Bestvina complex is minimal among all possible models for $\EFG$ (except the case where $\fcd G=2$).\smallskip

	The next corollary lists equivalent conditions for fundamental groups of strictly developable thin simple complexes of groups to act on trees with the prescribed family of stabilisers.

	\begin{corollary}[Theorem~\ref{thm:deformationretraction}]\label{cor:introdeformationretraction}
	  Let $G(\mathcal Q)$ be a strictly developable thin simple complex of groups over a poset $\mathcal{Q}$ with the fundamental group $G$ and let $\mF$ be the family generated by local groups. Suppose that $D(\D, G(\mathcal{Q}))$ is a model for $\EFG$. Then the following are equivalent:
	  \begin{itemize} 
	    \item[(i)] $D(B, G(\mathcal{Q}))$ is a tree and an equivariant deformation retract of $D(K, G(\mathcal{Q}))$.
	    \item[(ii)] $ \fcd G\leqslant 1$.
	    \item[(iii)] $H^{n}(K_{> J})= 0 \text{ for all } J \in \mathcal Q$ and $n \geqslant 1$.
	  \end{itemize}
	\end{corollary}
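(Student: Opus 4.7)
The plan is to establish the cycle (iii)$\Leftrightarrow$(ii)$\Rightarrow$(i)$\Rightarrow$(ii). The equivalence (ii)$\Leftrightarrow$(iii) I would read directly off formula~\eqref{eq:introdim} of Theorem~\ref{thm:intromain}: since reduced and unreduced cohomology of $K_{>J}$ coincide in positive degrees, the condition that $H^n(K_{>J})$ vanishes for all $J\in\mathcal Q$ and all $n\geq 1$ is precisely the statement that the maximum in~\eqref{eq:introdim} is at most $1$, i.e.\ $\fcd G\leq 1$.

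The implication (i)$\Rightarrow$(ii) is short: an equivariant deformation retraction of $D(K,G(\mathcal Q))$ onto the tree $D(B,G(\mathcal Q))$ produces a contractible, at most $1$-dimensional $G$-CW complex whose cell stabilisers lie in $\mF$ (stabilisers of the retract are stabilisers of cells in the original). Hence the tree is itself a $G$-CW model for $\EFG$, so $\fgd G\leq 1$ and therefore $\fcd G\leq 1$.

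The substantive direction is (ii)$\Rightarrow$(i). Assuming $\fcd G\leq 1$, so in particular $\fcd G\neq 2$, Theorem~\ref{thm:intromain}\ref{it:mainintroCohDim} yields $\dim D(B,G(\mathcal Q))=\fcd G\leq 1$; being a cocompact model for $\EFG$, the space $D(B,G(\mathcal Q))$ is contractible and hence a tree. The main obstacle is then to upgrade the bare $G$-homotopy equivalence of Theorem~\ref{thm:intromain}\ref{it:HoeqPlusFormulaCohomology} to an actual equivariant deformation retraction $D(K,G(\mathcal Q))\to D(B,G(\mathcal Q))$. I would handle this by appealing to the construction of the Bestvina complex used in the proof of Theorem~\ref{thm:intromain}, in which $B$ is obtained as a subcomplex of a subdivision of $K$ by collapsing contractible unions of simplices inside the panels $K_{\geq J}$. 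What must be checked is that these panel-wise collapses can be arranged $P_J$-equivariantly and compatibly across the poset $\mathcal Q$, so that, by applying the Basic Construction, they assemble into a $G$-equivariant deformation retraction from $D(K,G(\mathcal Q))$ onto $D(B,G(\mathcal Q))$.
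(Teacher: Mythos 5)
Your handling of (ii)$\Leftrightarrow$(iii) and of (i)$\Rightarrow$(ii) is correct and is essentially how the paper sees these implications: the first is just formula~\eqref{eq:introdim} of Theorem~\ref{thm:intromain}, and the second is the standard observation that a $G$-equivariant deformation retract of a model for $\EFG$ with stabilisers in $\mF$ is again a model, so a $1$-dimensional such retract forces $\fcd G\leqslant \fgd G\leqslant 1$.

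The genuine gap is in (ii)$\Rightarrow$(i), which is where the whole content of the theorem lies. You reduce it to ``checking that the panel-wise collapses defining $B$ can be arranged $P_J$-equivariantly and compatibly'', but this is not a routine verification, and the mechanism you describe is not how $B$ is related to $K$: the Bestvina panel complex is built abstractly (each panel $\B_J$ is some compact contractible polyhedron of minimal dimension containing $\B_{>J}$), it is not in general a subcomplex of a subdivision of $K$ obtained by collapses, and the comparison in Theorem~\ref{thm:intromain} is only a $G$-homotopy equivalence coming from maps of Basic Constructions. In fact, whether $D(B,G(\mQ))$ can ever be realised as an equivariant deformation retract of $D(K,G(\mQ))$ is posed in the paper as an open problem in general (Question~\ref{ques:spine}), so the step you defer is exactly the one that would fail without exploiting what is special about $\fcd G\leqslant 1$. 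Your sketch never uses that special feature: condition (iii), i.e.\ $\rH^{n}(K_{>J})=0$ for all $J\in\mQ$ and $n\geqslant 1$, which (as the paper notes) forces each $K_{>J}$ to be a disjoint union of contractible subcomplexes. This is the crucial geometric input that makes it possible to retract each panel onto a point or a tree relative to its upper link, working inductively over the poset and in a way that moves points only within the panels containing them, so that the homotopies descend to $D(K,G(\mQ))$. The paper's proof of Theorem~\ref{thm:deformationretraction} consists precisely of importing the construction of \cite[Theorem~4.8]{PePry} verbatim, after observing that the only place finiteness of the local groups was used there is the cohomological formula, which Theorem~\ref{thm:intromain} now supplies for arbitrary local groups. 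To complete your argument you would need either to reproduce that inductive retraction (feeding in condition (iii) at each stage) or to cite it; as written, the key idea is missing.
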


	The following corollary is immediate.

	\begin{corollary}\label{cor:introbuilding_2} 
	  Let $G$ be a group acting chamber transitively on a building of type $(W,S)$. The geometric realisation of the building equivariantly deformation retracts onto a tree if and only if $\vcd W  \leqslant 1$.
	\end{corollary}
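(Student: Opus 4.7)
The proof plan is to chain together Corollary~\ref{cor:introbuilding} and Corollary~\ref{cor:introdeformationretraction}, together with Lemma~\ref{lem:buildingmin} which ensures that the simple complex of groups $G(\mathcal Q)$ arising from a chamber transitive action on a building of type $(W,S)$ is strictly developable and thin, with the standard development $D(K,G(\mathcal Q))$ being a realisation of the building and a cocompact model for $\EFG$, where $\mF$ is the family generated by the stabilisers.

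First I would invoke Corollary~\ref{cor:introbuilding} to identify the Bredon cohomological dimension of $G$ with the virtual cohomological dimension of the Coxeter group: $\fcd G = \vcd W$. This translates the hypothesis $\vcd W \leqslant 1$ into the equivalent condition $\fcd G \leqslant 1$.

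Next I would apply Corollary~\ref{cor:introdeformationretraction} to the thin strictly developable simple complex of groups $G(\mathcal Q)$. The equivalence of (i) and (ii) there tells us precisely that $D(K, G(\mathcal Q))$ equivariantly deformation retracts onto the Bestvina complex $D(B, G(\mathcal Q))$, which is a tree, if and only if $\fcd G \leqslant 1$. Combined with the previous step, this gives the forward implication: if $\vcd W \leqslant 1$, then the building equivariantly deformation retracts onto the tree $D(B, G(\mathcal Q))$.

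For the converse, suppose that the geometric realisation of the building equivariantly deformation retracts onto some tree $T$. Then $T$ is itself a model for $\EFG$ of dimension at most one, so $\fcd G \leqslant 1$, and by Corollary~\ref{cor:introbuilding} we conclude $\vcd W = \fcd G \leqslant 1$. The only potential subtlety is making sure the Bestvina complex is genuinely a tree when $\fcd G \leqslant 1$; this follows from Theorem~\ref{thm:intromain}\ref{it:mainintroCohDim}, since in this range $\fcd G \neq 2$ forces $\dim D(B, G(\mathcal Q)) = \fcd G \leqslant 1$, and the complex is contractible as a model for $\EFG$, hence a tree. No step is expected to be difficult; the work is entirely in unpacking the already established general results in the specific context of buildings.
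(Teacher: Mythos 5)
Your proposal is correct and follows exactly the route the paper intends (it states the corollary is immediate from Corollary~\ref{cor:introbuilding} and Corollary~\ref{cor:introdeformationretraction}): translate $\vcd W\leqslant 1$ into $\fcd G\leqslant 1$ via $\fcd G=\vcd W$, then apply the equivalence for thin strictly developable complexes of groups. Your extra care in the converse direction—observing that an equivariant deformation retraction onto \emph{any} tree yields a one-dimensional model for $\EFG$, hence $\fcd G\leqslant 1$, rather than appealing only to the specific retract $D(B,G(\mathcal Q))$—is exactly the right way to close that small gap.
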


	Corollary \ref{cor:introdeformationretraction}  is a generalisation of \cite[Proposition 8.8.5]{Davbook} which deals with the case when $G=W$ is a Coxeter group acting on the Davis complex. It is a special case of the following folklore conjecture.  

	\begin{conj}\label{conj:folklore}
	  Let $G$ be a group and $\mF$ be a family of subgroups. Then $ \fcd G\leqslant 1$ if and only if $G$ acts on a tree with stabilisers generating $\mF$.
	\end{conj}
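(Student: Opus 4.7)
The easy direction is direct. Suppose $G$ acts on a tree $T$ with vertex stabilisers generating $\mF$. After barycentric subdivision we may assume $G$ acts without inversions. Then for every subgroup $H\leqslant G$ the fixed point set $T^H$ is a subtree, hence either empty or contractible, and $T^H\neq \emptyset$ if and only if $H$ is contained in some vertex stabiliser, equivalently $H\in \mF$. Thus $T$ is a one-dimensional model for $\EFG$, so $\fcd G\leqslant \fgd G \leqslant 1$.

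For the hard direction, the plan is to produce a $G$-tree with stabiliser family $\mF$ starting from the cohomological hypothesis $\fcd G\leqslant 1$. The clearest route uses the machinery developed in this paper: if some model for $\EFG$ admits a strict fundamental domain, then $G$ is the fundamental group of a strictly developable simple complex of groups $G(\mathcal Q)$ by Theorem~\ref{thm:scogisfund}. After reducing to the thin case (by collapsing each block of $\mathcal Q$) so that Theorem~\ref{thm:intromain} applies, the hypothesis $\fcd G\leqslant 1$ combined with formula \eqref{eq:introdim} forces $\rH^n(K_{>J})=0$ for every $n\geqslant 1$ and every $J\in \mathcal Q$. Part \ref{it:mainintroCohDim} of Theorem~\ref{thm:intromain} then exhibits the Bestvina complex $D(B,G(\mathcal Q))$ as a cocompact one-dimensional model for $\EFG$, and its simple connectivity is inherited from the standard development along the equivariant homotopy equivalence, so it is a tree. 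This is precisely the content of Corollary~\ref{cor:introdeformationretraction}.

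The main obstacle to the full conjecture is producing such a strict fundamental domain in the first place, or else replacing the Basic Construction machinery by an intrinsically cohomological argument. In the classical case $\mF=\{1\}$ this is the Stallings--Swan theorem, whose modern proofs rest on Dunwoody's construction of tracks and accessibility. A Bredon-theoretic analogue would have to extract a non-trivial equivariant splitting of $G$ over subgroups in $\mF$ directly from a length-one Bredon projective resolution of the constant functor, and then iterate by an accessibility-type argument to assemble a genuine tree. Additional subtleties arise because $\mF$ may contain infinite subgroups and $G$ need not be cocompact over $\mF$, so finiteness tools such as $\fpinfty$ are not available. Thus the crux of the full statement is a genuinely Bredon-cohomological Stallings--Swan theorem, and the present paper's contribution is to settle the geometrically natural case in which $G$ already acts with a strict fundamental domain.
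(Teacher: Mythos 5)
You were asked about a statement that the paper itself records as an open conjecture: the authors give no proof of Conjecture~\ref{conj:folklore}, and only establish the special case stated as Theorem~\ref{thm:deformationretraction} (Corollary~\ref{cor:introdeformationretraction}), namely for a strictly developable \emph{thin} simple complex of groups whose standard development $D(K,G(\mathcal Q))$ is a model for $\EFG$. Your proposal reflects this state of affairs accurately: the easy direction is proved in general, and the hard direction is reduced to exactly the paper's special case, with the correct diagnosis that a full proof would require a Bredon-cohomological analogue of Stallings--Swan (the paper deliberately avoids Dunwoody accessibility and Dicks--Dunwoody almost stability even in its special case). Your route through Theorem~\ref{thm:intromain} and formula~\eqref{eq:introdim} is the same as the paper's; note only that the paper's Theorem~\ref{thm:deformationretraction} proves something slightly stronger than ``the Bestvina complex is a tree model'', namely that $D(K,G(\mathcal Q))$ equivariantly deformation retracts onto $D(B,G(\mathcal Q))$.

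Two points deserve care. First, in the easy direction the remark ``after barycentric subdivision we may assume $G$ acts without inversions'' is not harmless: if the original action inverts an edge, the midpoint of that edge in the subdivision has stabiliser the setwise edge stabiliser, which need not belong to the family generated by the original pointwise stabilisers, so the subdivided tree need not be a model for $\EFG$. Indeed, if inversions are allowed and ``stabilisers'' means pointwise stabilisers, the conjectured equivalence is simply false ($\Z/2$ inverting a single edge has all pointwise stabilisers trivial, yet $\cd(\Z/2)=\infty$); the conjecture must be read for admissible ($G$-CW) actions, in which case no subdivision is needed and your argument is correct as it stands. Second, your reduction ``collapse each block of $\mathcal Q$ to get a thin complex and then apply Theorem~\ref{thm:intromain}'' presupposes that the standard development of the thinned complex $G(\mathcal R)$ is again a model for $\EFG$; this is not established in the paper (Theorem~\ref{thm:deformationretraction} assumes thinness outright, and in the non-thin setting the paper only works at the level of Bredon chain complexes, as in Lemma~\ref{lem:homot}), so this step needs either a separate justification or the thinness hypothesis should be carried as an explicit assumption, as the paper does.
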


This conjecture is wide open in general. When $\mF$ is the trivial family, it reduces to the classical theorem of Stallings and Swan.  For the family of finite subgroups $\mF$, it follows from Dunwoody's accessibility result \cite{Dun}. Recently, in \cite{deg}, Degrijse verified the conjecture when  $\mF$ is the family of virtually cyclic subgroups. Note that  Corollary \ref{cor:introdeformationretraction} confirms the conjecture when  $G$ admits a model for $\EFG$ with a strict fundamental domain such that the associated complex of groups is thin.

\subsection*{Organisation}

	Sections~\ref{sec:bredon} and ~\ref{sec:sdabc} have a preparatory character. In Section~\ref{sec:bredon}, we give a background on classifying spaces for families of subgroups and Bredon cohomology, and we prove Theorem~\ref{intro:stab}. In Section~\ref{sec:sdabc}, we define simple complexes of groups, the Basic Construction and Bestvina complex. We describe the procedure of thinning, and we use it to compute upper bounds for the geometric dimension of the fundamental group of a simple complex of groups.

	The next three sections form the technical core of the paper. In Section~\ref{sec:minimal}, we prove Proposition~\ref{prop:chains} which allows us to compute the Bredon cohomological dimension of a fundamental group of a thin complex of groups. In Section~\ref{sec:cohofscofgs},  we prove an analogous Proposition~\ref{prop:ggchains} for an arbitrary complex of groups. Section~\ref{sec:maintheorems} contains generalised statements and proofs of Theorems~\ref{thm:introrigidgen} and ~\ref{thm:intromain}.

	In the remaining sections we discuss applications and consequences of the main theorems. In Section~\ref{sec:defretraction}, we briefly discuss the case when $\fcd G=1$ and we give a proof of Theorem~\ref{cor:introdeformationretraction}. In Section~\ref{sec:applications}, we discuss applications of our theory to chamber transitive automorphism groups of buildings and we prove Corollary~\ref{cor:introbuilding} as well as other applications and examples.
	In Section~\ref{sec:reflikeactions}, we define reflection-like actions, establish their basic properties  and prove Theorem~\ref{thm:intoreflikemain}. We then give some examples of reflection-like actions. Finally, in Section~\ref{sec:questions} we pose and discuss some open questions.

  \subsection*{Acknowledgements}

  We thank Ian Leary and Ashot Minasyan for helpful discussions.  We also thank the referee for the thorough reading of the paper and many useful suggestions that helped improve its exposition.

\section{Classifying spaces and Bredon cohomology}\label{sec:bredon}

\subsection{Classifying spaces for families of subgroups}

Let $G$ be a countable discrete group. A \emph{family} $\mF$ of subgroups of $G$ is a collection of subgroups that is closed under conjugation and taking subgroups. 
Given a collection of subgroups $\mP$ of $G$, the \emph{family of subgroups generated by} $\mP$ is the smallest family of subgroups $\mF$  of $G$ containing all subgroups of $\mP$. 

\begin{definition}
  A collection of subgroups $\mP$ of $G$ is \emph{rigid} if for every $H \in \mP$ no $G$-conjugate of $H$ is properly contained in $H$. 
\end{definition}

Recall that a \emph{polyhedron} (or a \emph{polyhedral complex}) is a CW-complex whose attaching maps are piecewise linear. We say that the action of a group $G$ on a polyhedral (CW, simplicial) complex $X$ is \emph{admissible} if for any cell $e \subset X$ its pointwise stabiliser is equal to its setwise stabiliser. In such case we call $X$ a $G$-polyhedral ($G$-CW, $G$-simplicial) complex. A $G$-CW-complex $X$ is \emph{cocompact} (or the $G$-action on $X$ is cocompact) if $X/G$ is compact, i.e.,\ it has finitely many cells. 

\begin{definition}[Classifying space $E_{\mF}G$] 
  Given a group $G$ and a family of its subgroups $\mF$, a model for the \emph{classifying space of $G$ for the family $\mF$} denoted by $E_{\mF}G$ is a $G$-CW-complex $X$ such that:
  \begin{itemize}
    \item for any cell $e \subset X$ the stabiliser $G_e$ belongs to the family $\mF$,
    \item for any $H \in \mF$ the fixed point set $X^H$ is contractible.
  \end{itemize}
\end{definition} 

The classifying space $E_{\mF}G$ is a terminal object in the homotopy category of $G$-CW-complexes  with stabilisers in $\mF$, i.e.,~if $X$ is a $G$-CW-complex with stabilisers in $\mF$ then there exists a $G$-map $X \to E_{\mF}G$ which is unique up to $G$-homotopy. In particular, any two models for $E_{\mF}G$ are $G$-homotopy equivalent. The minimal dimension of a model for $\EFG$ is called the \emph{Bredon geometric dimension of $G$ for the family $\mathcal F$} and it is denoted by $\fgd G$.

\begin{remark}
  If $\mF$ contains only the trivial subgroup, the classifying space $E_{\mF}G$ is the  universal space for free actions, commonly denoted by $EG$. If $\mF$ consists of all finite subgroups of $G$, the classifying space $E_{\mF}G$ is called the classifying space for proper actions and it is denoted by $\underline{E}G$.
\end{remark}

\subsection{Bredon cohomology}

The \emph{orbit category} $\orb$ is a category defined as follows: the objects are the left coset spaces $G/H$ for all $H \in \mF$ and the morphisms are all $G$-equivariant maps between the objects. Note that every morphism $\varphi: G/H \rightarrow G/P$ is completely determined by $\varphi(H)$, since $\varphi(xH)=x\varphi(H)$ for all $x \in G$. Moreover, there exists a morphism: $$G/H \rightarrow G/P : H \mapsto xP \mbox{ if and only if } x^{\scriptscriptstyle -1}Hx \leqslant P.$$ We denote the morphism $\varphi: G/H \rightarrow G/P: H\mapsto xP$  by $G/H \xrightarrow{x} G/P$ and note that it is determined by the inclusion $x^{\scriptscriptstyle -1}Hx \leqslant P$. Given $H, P\in \mF$, we denote by $\mbox{hom}_G(G/H, G/P)$ the set of morphisms from $G/H$ to $G/P$.

An \emph{$\orb$-module} is a contravariant functor $M \colon \orb \rightarrow \Z\mbox{-Mod}$. The \emph{category of $\orb$-modules}, denoted by $\orbmod$, is the category whose objects are $\orb$-modules and whose morphisms are natural transformations between these objects. The set of morphisms between $M, N \in \orbmod$ is denoted by $\mathrm{Hom}_{\mF}(M, N)$.

A sequence \[0\rightarrow M' \rightarrow M \rightarrow M'' \rightarrow 0\]
in $\orbmod$ is called {\it exact} if it is exact after evaluating in $G/H$ for each $H \in \mF$.  For any  $P\in \mF$, the $\orb$-module $\mA_P=\Z[\mbox{hom}_G(-, G/P)]$ is a  free object in $\orbmod$. A module $F\in\orbmod$ is free if and only if $F\cong \bigoplus_{\alpha \in I} \mA_{P_{\alpha}}$ for some collection $I$ of not necessarily distinct subgroups $P_{\alpha}\in \mF$. We will say that $F$ is {\it based} at the elements $P_{\alpha}\in \mF$, $\alpha \in I$.

The \emph{$n$-th Bredon cohomology group of $G$} with coefficients $M \in \orbmod$ is by definition
\[ \mathrm{H}^n_{\mathcal{F}}(G,M)= \mathrm{Ext}^{n}_{\orb}(\underline{\Z},M), \]
where $\underline{\Z}$ is the functor that maps all objects to $\Z$ and all morphisms to the identity map. The \emph{Bredon cohomological dimension of $G$} is defined to be 
\[ \fcd G = \sup\{ n \in \mathbb{N} \ | \  \mathrm{H}^n_{\mathcal{F}}(G,M)\neq 0 \text{ for some } M \in \orbmod \}. \]
Given a $G$-CW-complex $X$, an $\orb$-module \[{C}^{\mF}_{n}(X)(-) \colon \mathcal O_{\mathcal{F}}G \to \Z\mbox{-Mod}\] is defined as \[{C}^{\mF}_{n}(X)(G/H) = C_{n}(X^H),\] where $C_{\ast}(-)$ denotes the cellular chains. Note that, in this way, the augmented cellular chain complex of any model for $\EFG$ yields a free resolution of $\underline{\Z}$ which can then  be used to compute $\mathrm{H}_{\mF}^{\ast}(G,-)$. It follows that $\fcd G \leqslant \fgd G$.\medskip

We now consider the situation when  $G$ admits a cocompact model for $\EFG$. In  this case, Bredon cohomology commutes with arbitrary direct sums of coefficient modules (see e.g.,\ \cite[Proposition 5.2]{MarNuc}) and  one obtains the following proposition which is standard (see e.g.,\ \cite[Equation 2]{DMP}).

\begin{proposition}\label{prop:free} 
  Suppose that $X$ is a cocompact model for $\EFG$. Then 
  \[\fcd G=\sup\{k \in \mathbb{N} \mid H^{k}_{\mF}(X, \mA_H) \neq 0  \text{ for some } H\in \mF \}.\]
\end{proposition}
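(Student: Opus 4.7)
The plan is to prove the two inequalities separately. The inequality $\fcd G \geqslant \sup\{k \mid H^k_{\mF}(X, \mA_H) \neq 0 \text{ for some } H\in \mF\}$ is immediate from the definitions: since $X$ is a model for $\EFG$, we have $H^k_{\mF}(X, \mA_H) \cong H^k_{\mF}(G, \mA_H)$, and each $\mA_H$ lies in $\orbmod$, so it contributes to the supremum defining $\fcd G$.

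For the reverse inequality, I would set $n = \sup\{k \mid H^k_{\mF}(X, \mA_H) \neq 0 \text{ for some } H\in \mF\}$ and show that $H^k_{\mF}(X, M) = 0$ for every $M \in \orbmod$ and every $k > n$. Cocompactness of $X$ implies that $X/G$ is a finite CW-complex, so $d := \dim X < \infty$ and each $C^{\mF}_i(X)$ is a finitely generated free $\orb$-module, vanishing for $i > d$. In particular $H^k_{\mF}(X, M) = 0$ for any $M$ whenever $k > d$. Moreover, because the cochain complex $\mathrm{Hom}_{\mF}(C^{\mF}_\ast(X), -)$ is termwise a finite direct sum of evaluation functors, the commutation of Bredon cohomology with arbitrary direct sums of coefficient modules (cited above) applies: for any free module $F = \bigoplus_\beta \mA_{H_\beta}$ one obtains $H^k_{\mF}(X, F) \cong \bigoplus_\beta H^k_{\mF}(X, \mA_{H_\beta})$, which vanishes for $k > n$ by the choice of $n$.

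The final step is a standard dimension-shifting argument. Choose a short exact sequence $0 \to K \to F \to M \to 0$ with $F$ free in $\orbmod$. For any $k > n$, both $H^k_{\mF}(X, F)$ and $H^{k+1}_{\mF}(X, F)$ vanish, so the long exact sequence yields an isomorphism $H^k_{\mF}(X, M) \cong H^{k+1}_{\mF}(X, K)$. Iterating with successive free covers of the syzygies, after at most $d-k+1$ steps the cohomology is shifted into degree $d+1$, where it vanishes by finite-dimensionality. Hence $H^k_{\mF}(X, M) = 0$ for all $k > n$ and all $M$, giving $\fcd G \leqslant n$.

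The argument is essentially routine homological algebra once the direct-sum commutation is in hand; the main point to verify carefully is the interplay of the two ingredients, namely cocompactness (used to force finite-dimensionality so that the iterated dimension shifting terminates) and the direct-sum commutation of $H^\ast_{\mF}(X, -)$ (used to reduce from arbitrary free modules to the specific generators $\mA_H$) — neither alone would suffice.
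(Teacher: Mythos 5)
Your proof is correct and is essentially the argument the paper has in mind: the paper offers no proof of Proposition~\ref{prop:free}, only the remark that Bredon cohomology commutes with arbitrary direct sums of coefficients together with a citation to the standard reference, and your reduction to the modules $\mA_H$ via that commutation, followed by dimension shifting over the finitely generated, finite-length Bredon cochain complex coming from cocompactness, is precisely that standard argument. No gaps.
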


\noindent Below we derive a strengthening of Proposition~\ref{prop:free}, which is a key ingredient in the proof of Theorem \ref{thm:introrigidgen}.

\begin{theorem}\label{thm:stab} 
    Suppose that $X$ is a cocompact model for $\EFG$. Then 
    $$\fcd G=\mathrm{max}\{k \in \mathbb{N} \mid H^{k}_{\mF}(X, {\mA}_H) \neq 0  \text{ for some cell stabiliser } H \}.$$
    Moreover, if $H^{n}_{\mF}(G; \mA_L) \neq 0$ for $n=\fcd G$ and $L\in \mF$, then there exists a cell stabiliser $H\leqslant L$ such that
    $H^{n}_{\mF}(G; \mA_H) \neq 0$.
    \end{theorem}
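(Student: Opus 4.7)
My plan is to deduce the first assertion from the ``moreover'' clause, so the bulk of the work lies in the latter. As a preparatory observation, cocompactness of $X$ forces $\dim X < \infty$, hence $\fcd G \leqslant \dim X < \infty$; in particular $n := \fcd G$ is finite, and $H^{n+1}_{\mF}(G;-)$ vanishes on every coefficient module. Granted the moreover, Proposition~\ref{prop:free} supplies some $L \in \mF$ with $H^n_{\mF}(G; \mA_L) \neq 0$; applying the moreover to this $L$ produces a cell stabiliser $H \leqslant L$ with $H^n_{\mF}(G; \mA_H) \neq 0$, showing that the supremum from Proposition~\ref{prop:free} is attained by a cell stabiliser.

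To prove the moreover, fix $L \in \mF$ with $H^n_{\mF}(G; \mA_L) \neq 0$, and let $\mathcal{S}_L$ denote the (possibly infinite) set of cell stabilisers of $X$ -- viewed as actual subgroups of $G$, not conjugacy classes -- that are contained in $L$. Form the $\orb$-module $F := \bigoplus_{H \in \mathcal{S}_L} \mA_H$, and let $\Psi : F \to \mA_L$ be the morphism whose restriction to the $H$-summand is the natural map $\mA_H \to \mA_L$ induced by $xH \mapsto xL$. The strategy is to use $\Psi$ to transport non-vanishing of cohomology from $\mA_L$ back to some $\mA_H$ with $H$ a cell stabiliser inside $L$.

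Split $\Psi$ into the short exact sequences $0 \to \ker\Psi \to F \to \operatorname{im}\Psi \to 0$ and $0 \to \operatorname{im}\Psi \to \mA_L \to \operatorname{coker}\Psi \to 0$ in $\orbmod$, and consider the associated long exact sequences in $H^*_{\mF}(G;-)$. Since $n = \fcd G$, the $H^{n+1}$-terms of both sequences vanish, yielding surjections
\[
H^n_{\mF}(G; F) \twoheadrightarrow H^n_{\mF}(G; \operatorname{im}\Psi) \twoheadrightarrow H^n_{\mF}(G; \mA_L).
\]
Cocompactness of $X$ makes each $C^{\mF}_n(X)$ finitely generated free, so $H^n_{\mF}(G; -)$ commutes with arbitrary direct sums of coefficient modules, giving $H^n_{\mF}(G; F) \cong \bigoplus_{H \in \mathcal{S}_L} H^n_{\mF}(G; \mA_H)$. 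Combining these facts, $H^n_{\mF}(G; \mA_L) \neq 0$ forces $H^n_{\mF}(G; \mA_H) \neq 0$ for some $H \in \mathcal{S}_L$, i.e., some cell stabiliser $H \leqslant L$.

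The main conceptual obstacle to anticipate is the temptation to demand that $\Psi$ be an epimorphism in $\orbmod$; it typically is not, since for a non-cell-stabiliser $K \in \mF$ the image of $\mA_L(G/K)$ under $\Psi$ may miss elements corresponding to subgroups of $L$ that fail to be cell stabilisers. What the argument reveals is that no such surjectivity is needed: at top degree, vanishing of $H^{n+1}_{\mF}(G;-)$ on \emph{both} $\ker\Psi$ and $\operatorname{coker}\Psi$ automatically upgrades \emph{any} such morphism $\Psi$ into a cohomological surjection. The only subsidiary technicality is that $\mathcal{S}_L$ may be infinite (a single $G$-conjugacy class of cell stabilisers may contribute infinitely many subgroups of $L$), so one must ensure the decomposition $H^n_{\mF}(G; F) \cong \bigoplus_{H} H^n_{\mF}(G; \mA_H)$ works for infinite direct sums, which follows from the finite generation of each $C^{\mF}_n(X)$.
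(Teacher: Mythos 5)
Your reduction of the displayed equality to the ``moreover'' clause via Proposition~\ref{prop:free} is fine, as is the first surjection $H^n_{\mF}(G;F)\twoheadrightarrow H^n_{\mF}(G;\operatorname{im}\Psi)$ (the long exact sequence of $0\to\ker\Psi\to F\to\operatorname{im}\Psi\to 0$ together with $H^{n+1}_{\mF}(G;\ker\Psi)=0$), and the commutation of $H^n_{\mF}(G;-)$ with the possibly infinite direct sum. The gap is the second claimed surjection. For $0\to\operatorname{im}\Psi\to\mA_L\to\operatorname{coker}\Psi\to 0$ the long exact sequence reads
\[
\cdots\to H^{n}_{\mF}(G;\operatorname{im}\Psi)\to H^{n}_{\mF}(G;\mA_L)\to H^{n}_{\mF}(G;\operatorname{coker}\Psi)\to H^{n+1}_{\mF}(G;\operatorname{im}\Psi)\to\cdots,
\]
so vanishing of the $H^{n+1}$-term only gives surjectivity of $H^{n}_{\mF}(G;\mA_L)\to H^{n}_{\mF}(G;\operatorname{coker}\Psi)$; it says nothing about surjectivity of $H^{n}_{\mF}(G;\operatorname{im}\Psi)\to H^{n}_{\mF}(G;\mA_L)$, which holds exactly when the map to $H^{n}_{\mF}(G;\operatorname{coker}\Psi)$ kills every class --- i.e.\ exactly what you are trying to prove. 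The ``automatic upgrade'' principle you invoke is false: applied to the zero morphism $0\to\mA_L$ (kernel and cokernel both have vanishing $H^{n+1}$ in top degree $n=\fcd G$) it would yield $H^{n}_{\mF}(G;\mA_L)=0$ whenever it is nonzero. Since you concede that $\Psi$ is typically not an epimorphism in $\orbmod$, the argument as written does not produce the lift of the nonzero top-dimensional class, which is precisely where the content of the theorem lies.

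For comparison, the paper works at the cochain level rather than with long exact sequences. Using the Bredon analogue of Schanuel's lemma it replaces $C^{\mF}_{\ast}(X)$ by a free resolution of $\underline{\Z}$ of length \emph{exactly} $n=\fcd G$, still based at cell stabilisers. In top degree every cochain is a cocycle, so $H^{n}_{\mF}(X;\mA_L)\neq 0$ forces the last coboundary $\delta^L_{\mF}$ to miss some \emph{generator} $(G/G_{\sigma}\xrightarrow{x}G/L)$ of $\bigoplus_j\Z[\hom_G(G/G_{\sigma_j},G/L)]$. Setting $H=x^{-1}G_{\sigma}x\leqslant L$ (again a cell stabiliser), the induced map of cochain complexes $\mathrm{Hom}_{\mF}(D^{\mF}_{\ast},\mA_H)\to\mathrm{Hom}_{\mF}(D^{\mF}_{\ast},\mA_L)$ sends $(G/G_{\sigma}\xrightarrow{x}G/H)$ to the missed generator, and commutativity with the coboundaries shows that this element is not in the image of $\delta^H_{\mF}$, whence $H^{n}_{\mF}(X;\mA_H)\neq 0$. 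If you want to salvage your soft approach you would need either an epimorphism onto $\mA_L$ from a free module based at cell stabilisers subconjugate to $L$ (which, as you note, is not available in general), or an explicit lifting argument of the above kind.
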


\begin{proof}
  The chain complex $C_i^{\mF}(X)$ forms a resolution of $\underline{\Z}$ of finite length by finitely generated free  $\orb$-modules. Let $P=\ker\{C_{n-1}^{\mF}(X)\to C_{n-2}^{\mF}(X)\}$. Then $P$ is projective and $$0\to P\to C_{n-1}^{\mF}(X)\to \dots \to C_{0}^{\mF}(X)\to \underline{\Z}\to 0$$ is exact. By applying the Bredon analog of Schanuel's Lemma \cite[VIII.4.4]{brownco} to the above two resolutions, it follows that there is a finitely generated free  $\orb$-module $F$ based at stabilisers of the action of $G$ on $X$ such that $P\oplus F$ is  a finitely generated free $\orb$-module again based at stabilisers of the action of $G$ on $X$. We can define the resolution $(D^{\mF}_{\ast}, \partial_{\mF})$ of $\underline{\Z}$ by finitely generated free $\orb$-modules
  \[D^{\mF}_i = \left\{ \begin{array}{ll}
                                                     C_i^{\mF}(X) & i\leqslant n-2, \\
                          \displaystyle{C_{n-1}^{\mF}(X)\oplus F} & i=n-1,\\
                                         \displaystyle{P\oplus F} & i=n,\\
                                                                0 & i>n. 
      \end{array} \right.\]

\noindent Since $X$ is cocompact, by Proposition~\ref{prop:free} there exists $L\in \mF$, such that $H^{n}_{\mF}(X, \mathcal{A}_L) \neq 0$. Then $H^{n}_{\mF}(D^{\mF}_{\ast}, \mA_L) \neq 0$, which means that 
the co-boundary map 
$$\delta^L_{\mF}:\mathrm{Hom}_{\mF}(D_{n-1}^{\mF}, \mA_L)\to \mathrm{Hom}_{\mF}(D_{n}^{\mF}, \mA_L)$$
is not onto. Rewriting this more explicitly using the Yoneda Lemma, we have
$$\delta^L_{\mF}:\sum_{i=1}^k  \Z[\hom_G(G/{G_{\tau_i}}, G/{L})]\to \sum_{j=1}^l  \Z[\hom_G(G/{G_{\sigma_j}}, G/{L})]$$
is not onto. This implies that there exists a stabiliser $G_{\sigma}$ of some cell  $\sigma$ so that the generator $(G/{G_{\sigma}}\xrightarrow{x} G/L)$ of the $n$-th co-chain group is not in the image of $\delta^L_{\mF}$. Denote $H=x^{-1}G_{\sigma}x\leqslant L$. This inclusion induces  an $\orb$-module map $\mA_H\to \mA_L$ which in turn induces a map of co-chain complexes $$\Delta_{\ast}:\mathrm{Hom}_{\mF}(D_{\ast}^{\mF}, \mA_H)\to \mathrm{Hom}_{\mF}(D_{\ast}^{\mF}, \mA_L)$$
 such that  $$\Delta_n(G/{G_{\sigma}}\xrightarrow{x} G/{H})=(G/{G_{\sigma}}\xrightarrow{x} G/{L}).$$ By the commutativity  $\delta^L_{\mF}\circ \Delta_{n-1}= \Delta_{n}\circ \delta^H_{\mF}$, we obtain that $(G/{G_{\sigma}}\xrightarrow{x} G/{H})$ is not in the image of $\delta^H_{\mF}$. Therefore, $$\delta^H_{\mF}:\mathrm{Hom}_{\mF}(D_{n-1}^{\mF}, \mA_H)\to \mathrm{Hom}_{\mF}(D_{n}^{\mF}, \mA_H)$$ is not onto which shows that $H^{n}_{\mF}(X, \mA_H)=H^{n}_{\mF}(D^{\mF}_{\ast}, \mA_H) \neq 0$.  
\end{proof}

Define a subset $\mbox{isom}_G(G/L, G/S) \subseteq \hom_G(G/L, G/{S})$ by $$\mbox{isom}_G(G/L, G/S)=\{\varphi:G/L \rightarrow G/S: L\mapsto xS \; | \; x^{-1}Lx=S\}.$$ 
Define an $\orb$-module $\mB_S$  by
$$ \mB_S(G/L)= \left\{ \begin{array}{lcr}

      \Z[\mbox{isom}_G(G/L, G/S)]         & \text{ if } & L=_G S,\\
      0 & \text{ if } &L\ne_G S. \\
   \end{array} \right.$$
      where $L=_G S$ means that $L$ and $S$ are conjugate in $G$.  For each $$(\varphi:G/L \xrightarrow{x} G/S)\in \mbox{isom}_G(G/L, G/S),$$ we set
 $$ \mB_S(\theta: G/H\xrightarrow{y} G/L)(\varphi)= \left\{ \begin{array}{lcr}

         (\varphi\circ\theta: G/H\xrightarrow{yx} G/S)     & \text{ if } & y^{-1}Hy=L,\\
      0 & \text{ if } &y^{-1}Hy\ne L. \\
      \end{array} \right.$$
which is an element in  $\mB_S(G/H)$. It is not difficult to check that $\mB_S$ is well-defined.

\begin{corollary}\label{cor:B_module} 
  Suppose that $X$ is a cocompact model for $\EFG$ and that the collection of cell stabilisers is rigid. Then 
  $$\fcd G=\mathrm{max}\{k \in \mathbb{N} \mid H^{k}_{\mF}(X, \mB_P) \neq 0  \text{ for some cell stabiliser } P \}.$$
\end{corollary}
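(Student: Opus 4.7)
The plan is to derive Corollary~\ref{cor:B_module} from Theorem~\ref{thm:stab} by comparing $\mA_H$ and $\mB_H$ for an appropriately chosen cell stabiliser $H$ via two short exact sequences, exploiting rigidity to separate $H$ from its proper subgroups at the level of $G$-conjugacy. The inequality $\max\{k\mid H^k_\mF(X,\mB_P)\ne 0\}\le\fcd G$ is automatic from the definition of $\fcd G$, so the work lies in the reverse direction. Set $n=\fcd G$; by Theorem~\ref{thm:stab} there exists a cell stabiliser $H$ with $H^n_\mF(X,\mA_H)\ne 0$, and cocompactness of $X$ lets me pick $H$ whose $G$-conjugacy class is minimal in the subconjugacy order among cell stabilisers with this property. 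I will show $H^n_\mF(X,\mB_H)\ne 0$.

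The first ingredient is the assignment $L\mapsto \Z\{\varphi\colon L\xrightarrow{x}H\mid x^{-1}Lx\lneqq H\}$, which I would verify defines a sub-$\orb$-module $\mA'_H\subseteq\mA_H$ whose quotient is canonically identified with $\mB_H$. Sub-modularity rests on the fact that for any morphism $\theta\colon G/H'\to G/L$ taking $H'\mapsto yL$ the structure map sends $\varphi$ to a morphism whose ``image subgroup'' $(yx)^{-1}H'(yx)$ lies in $x^{-1}Lx$, hence remains proper inside $H$; the quotient identification is a direct comparison with the defining formulas of $\mB_H$, using that the generators surviving the quotient are precisely the isomorphisms $L\xrightarrow{x}H$, which exist exactly when $L=_G H$. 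This produces the short exact sequence
\[0\to\mA'_H\to\mA_H\to\mB_H\to 0. \qquad(\star)\]

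The second ingredient is a surjection $\bigoplus_Q\mA_Q\twoheadrightarrow\mA'_H$ induced by the subgroup inclusions $Q\hookrightarrow H$, where $Q$ ranges over cell stabilisers that are proper subgroups of $H$. Surjectivity follows from the factorisation $(L\xrightarrow{x}H)=\iota_Q\circ(L\xrightarrow{x}Q)$ with $Q=x^{-1}Lx$, together with the observation that if $L=G_\sigma$ then $x^{-1}Lx=G_{x^{-1}\sigma}$ is again a cell stabiliser. Rigidity of $H$ is used to upgrade $Q\lneqq H$ to the strict subconjugacy $Q\lneqq_G H$: if $Q=_G H$ then some $G$-conjugate of $H$ would sit properly inside $H$, contradicting rigidity. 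Minimality of $H$ now gives $H^n_\mF(X,\mA_Q)=0$ for every $Q$ in the sum, and since Bredon cohomology of the cocompact model $X$ commutes with direct sums of coefficient modules (as in Proposition~\ref{prop:free}), $H^n_\mF(X,\bigoplus_Q\mA_Q)=0$.

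To conclude, I feed $H^{n+1}_\mF(X,-)=0$ (a consequence of $n=\fcd G$) into the long exact sequence associated to the kernel of the surjection above, which forces $H^n_\mF(X,\mA'_H)=0$; the same degree argument gives $H^{n+1}_\mF(X,\mA'_H)=0$. Substituting these vanishings into the long exact sequence of $(\star)$ yields an isomorphism $H^n_\mF(X,\mA_H)\cong H^n_\mF(X,\mB_H)$, so $H^n_\mF(X,\mB_H)\ne 0$, completing the proof. The delicate point I foresee is the module-theoretic bookkeeping in verifying that $\mA'_H$ is an honest sub-$\orb$-module whose cokernel matches the definition of $\mB_H$; once that is pinned down, the remainder is a standard long exact sequence chase controlled by rigidity and the minimality of $H$.
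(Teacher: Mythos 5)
Your overall strategy is the paper's: produce, via Theorem~\ref{thm:stab} and a minimality choice, a cell stabiliser $H$ with $H^n_{\mF}(X,\mA_H)\neq 0$ for $n=\fcd G$, fit $\mB_H$ into a short exact sequence $0\to\mA'_H\to\mA_H\to\mB_H\to 0$ (under rigidity your $\mA'_H$ is exactly the module $\mathcal C_{H}$ used in the paper's proof), kill the cohomology of the kernel by covering it with free modules based at smaller subgroups, and chase the long exact sequence. The first and last steps are fine. The gap is the covering step: the claimed surjection $\bigoplus_Q\mA_Q\twoheadrightarrow\mA'_H$, with $Q$ ranging over \emph{cell stabilisers} properly contained in $H$, is false in general. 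Surjectivity of a map of $\orb$-modules must be checked at every object $G/L$ with $L\in\mF$, and your verification only treats $L$ of the form $G_\sigma$: for an arbitrary $L\in\mF$ a generator $G/L\xrightarrow{x}G/H$ of $\mA'_H(G/L)$ factors canonically through $G/Q'$ with $Q'=x^{-1}Lx\lneq H$, but $Q'$ is merely a member of $\mF$ and need not be ($H$-conjugate into) a cell stabiliser lying properly inside $H$. Concretely, let a nontrivial finite group $G$ act trivially on a point and let $\mF$ be all subgroups: the hypotheses of the corollary hold, the only cell stabiliser is $G$, so your direct sum is empty, while $\mA'_G(G/L)\neq 0$ for every proper subgroup $L<G$.

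The repair is within reach of the tools you already cite. Either (i) let $Q$ range over \emph{all} proper subgroups of $H$ lying in $\mF$ (then surjectivity is immediate), and obtain $H^n_{\mF}(X,\mA_Q)=0$ for each such $Q$ from the ``moreover'' part of Theorem~\ref{thm:stab}: a nonzero $H^n_{\mF}(X,\mA_Q)$ would yield a cell stabiliser $H'\leqslant Q\lneq H$ with nonzero cohomology, and rigidity excludes $H'=_G H$, contradicting your minimal choice --- this is essentially the paper's route, which arranges the stronger minimality (no proper subgroup of $H$ in $\mF$ has nonzero $H^n$) by iterating Theorem~\ref{thm:stab}, rigidity guaranteeing termination, and then takes a free cover of $\mathcal C_H$ based at proper subgroups of $H$; or (ii) keep your index set but work on cochains: since $C^{\mF}_{\ast}(X)$ is free based at cell stabilisers, only the values of $\mA'_H$ at cell-stabiliser objects enter $\mathrm{Hom}_{\mF}(C^{\mF}_{\ast}(X),\mA'_H)$, and at those objects your map is surjective because $x^{-1}G_\sigma x$ is again a cell stabiliser; hence the image submodule $M'\subseteq\mA'_H$ has the same Bredon cohomology as $\mA'_H$, and your long exact sequence argument applies to $\bigoplus_Q\mA_Q\twoheadrightarrow M'$. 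With either fix the rest of your argument (vanishing of $H^{n+1}_{\mF}(X,-)$ and the sequence of $0\to\mA'_H\to\mA_H\to\mB_H\to 0$) goes through and even gives the isomorphism $H^n_{\mF}(X,\mA_H)\cong H^n_{\mF}(X,\mB_H)$, slightly more than the paper's either--or conclusion.
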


\begin{proof}  First, note that the cocompactness of $X$ implies that the set of conjugacy classes of cell stabilisers is finite.
  By Theorem \ref{thm:stab} there exists $P \in \mathcal{F}$ that is a stabiliser of a cell in $X$ such that  $H^{n}_{\mF}(X, \mA_{P}) \neq 0$ where $\fcd G=n$. By the rigidity of stabilisers and iteration of Theorem \ref{thm:stab}, we can assume that there exists such $P$ that does not contain a proper subgroup $S$ such that $H^{n}_{\mF}(X, \mA_{S}) \neq 0$. \noindent Observe that also by the rigidity for $H=_G P$ we have 
  $$\hom_G(G/H, G/{P})= \mbox{isom}_G(G/H, G/{P}).$$ 
  \noindent Again using rigidity, we can define an $\orb$-submodule $\mathcal C_{P}$ of $\mA_P$ by
  $$\mathcal C_{P}(G/H)= \left\{ \begin{array}{lcr}

     0    & \text{ if } & H =_G P,\\
       \Z[\hom_G(G/H, G/{P})] & \text{ if } &H\ne_G  P. \\

      \end{array} \right.$$

  \noindent  Considering the long exact sequence of the resulting short exact sequence 
  $$0\to \mathcal C_{P}\to \mA_{P}\to \mB_{P}\to 0,$$
  we obtain that either  $H^{n}_{\mF}(X, \mathcal C_{P}) \neq 0$ or $H^{n}_{\mF}(X, \mB_{P}) \neq 0$.  Considering a module that is a free cover of $\mathcal C_{P}$ consisting of free modules based at proper subgroups  of $P$, shows that if $H^{n}_{\mF}(X, \mathcal C_{P}) \neq 0$, then $H^{n}_{\mF}(X, \mA_{S}) \neq 0$ for some $S\lneq P$ which violates the minimality assumption on $P$. Hence, $H^{n}_{\mF}(X, \mB_{P}) \neq 0$. This establishes the claim.
\end{proof}

The Bredon cohomological and geometric dimensions for proper actions are denoted respectively by $\ucd G$ and $\ugd G$.

\begin{corollary}\label{cor:support}  
  Let $X$ be a $G$-CW-complex that is a cocompact model for $\eg$. Then
  \begin{equation*}
  \ucd G=\mathrm{max}\{k \in \mathbb{N} \mid H^{k}_{c}(X^H, X_{\mathrm{sing}}^H) \neq 0  \text{ for some cell stabiliser } H \},
  \end{equation*}
  where $X_{\mathrm{sing}}^H \subseteq X^H$ consists of all points whose stabiliser strictly contains $H$.
\end{corollary}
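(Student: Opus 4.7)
The plan is to deduce Corollary~\ref{cor:support} by combining the version of Theorem~\ref{thm:stab} just proved with the description of Bredon cohomology with free coefficients in terms of compactly supported cohomology of fixed-point pairs, which is exactly the content of \cite[Theorem~2.4]{DMP}. No new homological input is needed beyond what has already been assembled.

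First I would apply Theorem~\ref{thm:stab} to the family $\underline{\mathcal F}$ of finite subgroups of $G$. Since $X$ is assumed to be a cocompact model for $\underline{E}G$, the hypothesis of Theorem~\ref{thm:stab} is satisfied, and we obtain
\[
\ucd G \;=\; \max\bigl\{k\in\mathbb N \,\bigm|\, H^{k}_{\underline{\mathcal F}}(X,\mathcal A_H)\neq 0 \text{ for some cell stabiliser } H\bigr\}.
\]
The maximum is attained (as opposed to a supremum) because $X$ is cocompact and hence has only finitely many conjugacy classes of cell stabilisers, all of which are finite subgroups.

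Next I would invoke \cite[Theorem~2.4]{DMP}, which, under the hypothesis that $X$ is a cocompact model for $\underline{E}G$ and $H$ is a finite subgroup occurring as a cell stabiliser, provides a natural isomorphism
\[
H^{k}_{\underline{\mathcal F}}(X,\mathcal A_H) \;\cong\; H^{k}_{c}\!\left(X^{H},\, X^{H}_{\mathrm{sing}}\right),
\]
where $X^{H}_{\mathrm{sing}}$ is the subspace of $X^H$ consisting of points with stabiliser strictly larger than $H$. Substituting this identification into the formula from Theorem~\ref{thm:stab} yields the claimed description of $\ucd G$.

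There is no genuine obstacle here; the only thing to verify is that the hypotheses of \cite[Theorem~2.4]{DMP} line up with ours, which is immediate from the cocompactness of $X$ and the fact that the cell stabilisers are finite. The corollary is thus best viewed as a translation of Theorem~\ref{thm:stab} through the \textit{DMP dictionary} between free Bredon modules and compactly supported relative cohomology of fixed-point sets, sharpening \cite[Theorem~1.1]{DMP} by restricting attention to the finite collection of cell stabilisers rather than the whole family $\underline{\mathcal F}$.
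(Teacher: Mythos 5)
Your reduction to Theorem~\ref{thm:stab} is fine, but the bridging step is not: \cite[Theorem~2.4]{DMP} does not provide an isomorphism $H^{k}_{\mF}(X,\mA_H) \cong H^{k}_{c}(X^H, X_{\mathrm{sing}}^H)$, and indeed no such isomorphism holds in general. At the cochain level, $\mathrm{Hom}_{\mF}(C^{\mF}_{k}(X),\mA_H)$ is, by Yoneda, a sum over orbits of $k$-cells $\sigma$ of $\Z[\hom_G(G/G_{\sigma},G/H)]$, i.e.\ it records all cells whose stabiliser is \emph{subconjugate} to $H$, whereas the relative compactly supported complex of $(X^H, X_{\mathrm{sing}}^H)$ records only the cells whose stabiliser is \emph{exactly} $H$. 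A concrete failure: for $G=D_\infty$ acting on the real line with vertex stabilisers $A$ and $B$ of order two, one computes $H^{1}_{\mF}(X,\mA_A)\neq 0$ while $X^A$ is a single non-singular point, so $H^{1}_{c}(X^A, X_{\mathrm{sing}}^A)=0$. The module whose Bredon cohomology does compute $H^{*}_{c}(X^H, X_{\mathrm{sing}}^H)$ is the refinement $\mB_H$ (the quotient of $\mA_H$ killing the non-isomorphism morphisms), and that is the form in which \cite[Theorem~2.4]{DMP} enters.

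Consequently your proof is missing exactly the content of Corollary~\ref{cor:B_module}: one must upgrade the conclusion of Theorem~\ref{thm:stab} from $\mA$-coefficients to $\mB$-coefficients before invoking the DMP identification. This upgrade is not formal; it uses the rigidity of the collection of finite cell stabilisers (no conjugate of a finite group is properly contained in it), a choice of a cell stabiliser $P$ that is minimal among those with $H^{n}_{\mF}(X,\mA_P)\neq 0$ in top degree $n=\ucd G$, and the long exact sequence associated to $0\to \mathcal C_{P}\to \mA_{P}\to \mB_{P}\to 0$, together with a free cover of $\mathcal C_P$ based at proper subgroups of $P$ to rule out the $\mathcal C_P$ alternative. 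This is precisely how the paper argues: its proof of the corollary combines Corollary~\ref{cor:B_module} (not Theorem~\ref{thm:stab} directly) with \cite[Theorem~2.4]{DMP}. Once you insert that step, the rest of your outline goes through.
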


\begin{proof}
  The claim follows immediately from combining Corollary \ref{cor:B_module} and \cite[Theorem~2.4]{DMP}.
\end{proof}

\section{Simple complexes of groups}\label{sec:sdabc}

\subsection{Simple complexes of groups and the Basic Construction}

Throughout, let $\mQ$ be a finite poset. We denote by $\abs{\mathcal Q}$ the \emph{geometric realisation} of $\mathcal Q$, i.e., a simplicial complex whose simplices are chains of elements of $\mathcal Q$.

\begin{definition}[Simple complex of groups] 

  A \emph{simple complex of groups} $G(\mQ)$ over $\mQ$ consists of the following data:

  \begin{itemize}
    \item for any $J \in Q$ there is a group $P_J$ called a \emph{local group at} $J$,
    \item for any two elements $J \leqslant T$ in $\mQ$ there is a monomorphism \[\phi_{TJ} \colon P_J \to P_T,\] such that if $J \leqslant T \leqslant U$ then \[\phi_{UT} \circ \phi_{TJ}=\phi_{UJ}.\]
  \end{itemize}
\end{definition}

\begin{definition}[Simple morphism]
  Let $G(\mQ)$ be a simple complex of groups and let $G$ be a group. A \emph{simple morphism} $\psi \colon G(\mQ) \to G$ is a collection of maps $\psi_J \colon P_J \to G$ satisfying \[\psi_T \circ \phi_{TJ} =\psi_J \] for all pairs $J \leqslant T$ in $\mathcal Q$. We say that $\psi \colon G(\mQ) \to G$ is \emph{injective on local groups} if for every $J \in \mathcal{Q}$ the map $\psi_J \colon P_J \to G$ is injective. 
\end{definition}

Given a simple complex of groups $G(\mathcal Q)$, the \emph{fundamental group} $\widehat{G(\mathcal{Q})}$ of $G(\mQ)$ is the direct limit of the resulting directed system of groups 

\[\widehat{G(\mathcal{Q})}=\varinjlim_{J \in \mathcal Q}P_J, \]

Note that by the universal property of $\widehat{G(\mathcal{Q})}$ there exists a canonical simple morphism $\iota \colon G(\mathcal Q) \to \widehat{G(\mathcal{Q})}$ such that for every $J \in \mathcal Q$ the map $\iota_J  \colon P_J \to G(\mathcal Q)$ is the canonical map to the limit.\medskip

\begin{definition}[Strict developability]
	We say that a simple complex of groups $G(\mQ)$ is \emph{strictly developable} if the canonical simple morphism  $\iota \colon G(\mathcal Q) \to \widehat{G(\mathcal{Q})}$ is injective on local groups.

	Note that the strict developability is equivalent to the existence of a simple morphism $\psi \colon G(\mathcal Q) \to G$ that is injective on local groups, where $G$ is some group.\smallskip
\end{definition}

\begin{convention}\label{conv:localgroupsaresubgroups}
  If  $\psi \colon G(\mathcal Q) \to G$ is a simple morphism that is injective on local groups then for any $J \in \mathcal Q$ we identify the group $P_J$ with its image $\psi(P_J) \leqslant G$. 
\end{convention}

\begin{definition}[Panel complex] 
  A \emph{panel complex} $(X, \{X_J\}_{J \in Q})$ over $\mQ$ is a compact polyhedron $X$ together with family of subpolyhedra $\{X_J\}_{J \in Q} $ called \emph{panels} such that
  \begin{itemize}
    \item $X$ is the union of all the panels,
    \item $X_T \subseteq X_J$ if and only if $J \leqslant T$,
    \item for any two panels their intersection is either a union of panels or empty.
  \end{itemize}
\end{definition}

\begin{definition}[Standard panel complex]
  Define the panel complex $K$ over $\mQ$ as follows. Let $K= |\mQ|$ and for $J \in \mQ$ let $K_J= |\mQ_{\geqslant J}|$ where $\mQ_{\geqslant J}$ denotes the subposet of $\mQ$ consisting of all the elements greater than or equal to $J$.
\end{definition}

\begin{definition}[Basic Construction]
  Suppose that:
  \begin{itemize}
    \item $G(\mQ)$ is a strictly developable complex of groups,
    \item $X$ is a panel complex over $\mQ$,
    \item $\psi \colon G(\mathcal Q) \to G$ is a simple morphism to a group $G$ that is injective on local groups (thus for any $J \in \mathcal Q$ we identify $P_J$ with $\psi(P_J)$). 
  \end{itemize}
 For a point $x \in X$ let $J(x) \in Q$ be such that the panel $X_{J(x)}$ is the intersection of all the panels containing $x$. Define the Basic Construction $D(X, G(\mQ), \psi)$ as follows:
  \[D(X, G(\mQ), \psi)= G  \times X/ \sim\]
  where $(g_1,x_1) \sim (g_2,x_2)$ if and only if $x_1=x_2$ and $g_1^{-1}g_2 \in P_{J(x_1)}$. Let $[g,x]$ denote the equivalence class of $(g,x)$.
\end{definition}

The group $G$ acts on $D(X, G(\mQ), \psi)$ by $g \cdot[g',x]=[gg', x]$. It is easy to see that $D(X, G(\mQ), \psi)$ has a structure of a polyhedral complex and that the $G$-action preserves that structure.
The stabilisers of this action are  the conjugates of local groups $P_J$ and the quotient is homeomorphic to $X\cong [e, X] \subset D(X, G(\mQ), \psi)$. Moreover $X \cong [e, X]$ is a so-called \emph{strict fundamental domain} for the $G$-action in the sense that it is a closed subset of $D(X, G(\mQ))$ intersecting every orbit in precisely one point.\medskip

In fact, any admissible action with a strict fundamental domain arises in the way described above. 

\begin{theorem}[{\cite[Proposition~II.12.20]{BH}}]\label{thm:scogisfund}
	Suppose a group $G$ acts admissibly on a connected polyhedral complex $X$ with a strict fundamental domain $Y \subset X$. 

	Then there is a strictly developable simple complex of groups $G(\mQ)$, where $\mQ$ is the poset of cells of $Y$ (ordered by the reverse inclusion) and where the local group at cell $e\subset Y$ is its $G$-stabiliser. The inclusion of cell stabilisers into $G$ defines a simple morphism $\psi \colon G(\mathcal Q) \to G$ such that $X$ is $G$-equivariantly homeomorphic to the Basic Construction $D(K, G(\mQ),\psi )$, where $K$ is the standard panel complex associated to $\mQ$. Moreover, if $X$ is simply connected then $G$ is isomorphic to the fundamental group of $G(\mQ).$ 
\end{theorem}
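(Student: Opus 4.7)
The plan is to construct the data of $G(\mQ)$ directly from the $G$-action, identify the Basic Construction with $X$ via an explicit map, and handle the simply-connected case via a covering-space argument.

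First, let $\mQ$ be the face poset of $Y$ ordered by reverse inclusion, so that $e \leqslant e'$ in $\mQ$ means $e'$ is a face of $e$ in $Y$. For each cell $e\subset Y$ set $P_e := G_e$, which is unambiguous as admissibility equates pointwise and setwise stabilisers. Whenever $e \leqslant e'$, the inclusion $G_e \leqslant G_{e'}$ provides a monomorphism $\phi_{e'e} \colon P_e \to P_{e'}$, and the cocycle condition is automatic. This defines a simple complex of groups $G(\mQ)$, and the inclusions $P_e \hookrightarrow G$ assemble into a simple morphism $\psi \colon G(\mQ) \to G$ that is injective on local groups, so $G(\mQ)$ is strictly developable.

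Next, I would identify $K = \abs{\mQ}$ with the barycentric subdivision $Y'$ in the standard way, sending a chain $e_0 < e_1 < \cdots < e_k$ of cells to the simplex on their barycenters. A direct check shows that if $y \in Y$ lies in the interior of the cell $e$, then $y \in K_J$ precisely when $e$ is a face of $J$, so the index $J(y)$ arising in the definition of the Basic Construction equals $e$; in particular $G_y = G_e = P_{J(y)}$. Define $F \colon G \times K \to X$ by $F(g, y) = g \cdot y$. This map is surjective since $Y$ meets every $G$-orbit. If $F(g_1, y_1) = F(g_2, y_2)$, then $g_1 y_1$ and $g_2 y_2$ lie in the same orbit with both representatives in $Y$, forcing $y_1 = y_2$ and $g_2^{-1} g_1 \in G_{y_1} = P_{J(y_1)}$. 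Hence $F$ descends to a continuous $G$-equivariant bijection $\bar F \colon D(K, G(\mQ), \psi) \to X$. Because $\bar F$ is cellular and a bijection on each closed chamber, it is a homeomorphism of polyhedral complexes.

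For the simply-connected assertion, the universal property of the direct limit gives a homomorphism $p \colon \widehat{G(\mQ)} \to G$ with $\psi = p \circ \iota$. Since $X$ is connected, a standard chamber-connecting argument shows that $G$ is generated by the $P_J$, so $p$ is surjective; and because $\psi$ is injective on each $P_J$, the map $p$ is injective on each $\iota(P_J)$. Hence $N := \ker p$ acts freely on the standard development $\widetilde X := D(K, G(\mQ), \iota)$. A local analysis (the link of a point $[\tilde g, y] \in \widetilde X$ is the link of $y$ in $K$ with its natural $\iota(P_{J(y)})$-action, which matches the link of $[p(\tilde g), y] \in X$ via $p$) shows that the quotient $\widetilde X \to \widetilde X / N \cong X$ is a regular covering with deck group $N$. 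Since $\widetilde X$ is connected and $X$ is simply connected by hypothesis, this covering is trivial, so $N = 1$ and $G \cong \widehat{G(\mQ)}$.

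The main obstacle is justifying that the quotient map $\widetilde X \to X$ is a covering. This requires verifying that the identifications defining $\widetilde X$ around each panel $K_J$ are made using exactly $\iota(P_J)$, while those in $X$ use $p(\iota(P_J)) = P_J$, so that collapsing by $N$ reproduces the identifications of $X$ without creating additional ones. Strict developability, together with injectivity of $p$ on each $\iota(P_J)$, is precisely what ensures this local picture works out.
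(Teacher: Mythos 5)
The paper does not prove this statement at all --- it is quoted directly from Bridson--Haefliger (Proposition~II.12.20) --- so there is no internal proof to compare against; your write-up is a correct, essentially self-contained rendition of the standard argument behind that citation. The first half (defining $P_e=G_e$, checking that admissibility makes the face-inclusion maps well defined, identifying $K=\abs{\mathcal Q}$ with $Y'$, computing $J(y)=e$ and $G_y=G_e$ for $y$ in the open cell $e$, and showing $(g,y)\mapsto g\cdot y$ descends to a $G$-equivariant continuous bijection) is exactly how one verifies $X\cong D(K,G(\mathcal Q),\psi)$; the only details left implicit are routine: $Y$ (hence $K$) is connected because $X$ is, and the continuous bijection is a homeomorphism because it carries closed cells homeomorphically onto closed cells and both sides have the weak topology. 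Your covering-space treatment of the simply connected case is also the standard one and the key step you flag does go through: for $y'$ in the open star of $y$ one has $P_{J(y')}\leqslant P_{J(y)}$, so a star neighbourhood of $[\tilde g,y]$ in $\widetilde X$ is glued using only $\iota(P_{J(y)})$, and since $p$ is injective on every conjugate of $\iota(P_{J(y)})$ (equivalently $N\cap \tilde g\,\iota(P_{J(y)})\,\tilde g^{-1}=\{1\}$), the $N$-translates of such stars are disjoint and map homeomorphically, giving the evenly covered neighbourhoods; surjectivity of $p$ follows from your chamber-connectivity observation, connectivity of $\widetilde X$ from the fact that the images of the $P_J$ generate the colimit, and simple connectivity of $X$ then forces $N=1$. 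In short: correct, and it buys the reader a proof the paper outsources; if you wanted to shorten it, you could instead invoke the general criterion that a basic construction $D(K,G(\mathcal Q),\psi)$ is simply connected if and only if $\psi$ identifies $G$ with $\widehat{G(\mathcal Q)}$, which is the form in which Bridson--Haefliger package the same covering argument.
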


\begin{convention}
  In the case when $G$ is isomorphic to the fundamental group of $G(\mathcal Q)$ and the simple morphism $G(\mathcal Q) \to G$ is the canonical simple morphism $\iota$, we will omit the morphism from the notation and simply write  $D(X, G(\mQ))$ for the associated Basic Construction (where $X$ is a panel complex over $\mathcal Q$).
\end{convention}

\subsection{Thinning procedure}

\begin{definition}
  We say that a simple complex of groups $G(Q)$ is \emph{thin} if for any pair $J\leqslant T$ in $\mQ$, the monomorphism $\phi_{TJ} \colon P_J \to P_T$ is an isomorphism if and only if $J=T$.
\end{definition}

\begin{remark}
  Both in \cite{DMP} and \cite{PePry}, the assumption that a simple complex of groups is thin is a part of its definition.
\end{remark}

Below we describe a procedure of thinning, which given a strictly developable simple complex of groups $G(Q)$ results in a thin complex $G(\mR)$ together with a morphism of simple complexes of groups $G(\mQ) \to G(\mR)$ inducing an isomorphism of fundamental groups.

\begin{definition}[Block poset]
  Given a simple complex of groups $G(\mQ)$ with the collection of local groups $\{P_J\}_{J \in \mQ}$, let $\sim$ be an equivalence relation on $Q$ generated by 
    \[\begin{text}{$J\sim J'$  if $J \leqslant J' $ and $\phi_{J'J} \colon P_J \to P_{J'}$ is an isomorphism.}\end{text}\]

  An equivalence class $C$ of elements of $\mQ$ under relation $\sim$ is called a \emph{block}. There is a partial order on the set of blocks given by \[\text{$C\leqslant C'$ if and only if there exist $J\in C$ and $J'\in C'$ with $J\leqslant J'$.}\] Denote the associated poset by $\mR$ and call it the {\it block poset}.
\end{definition}

Note that there is a surjection of posets $\pi \colon \mQ \to \mR$ given by $J\in C \mapsto C$.

\begin{definition}[Thinning of a simple complex of groups]

  Let $G(\mQ)$ be a strictly developable simple complex of groups with the collection of local groups $\{P_J\}_{J \in \mQ}$ and the  fundamental group $G$.  Let $\mR$ be the block poset associated to $G(\mQ)$.

  Define a simple complex of groups $G(\mR)=(\{S_C\}_{C\in \mR}, \{ \psi_{C'C}\}_{C'\leqslant C \in \mR})$ as follows. For a block $C \in R$, let  $J \in \mQ$ be any element in the preimage $\pi^{-1}(C)$ and set $S_C=P_J$.
  Observe that $S_C$ is well-defined, since for all $J'\in \pi^{-1}(C)$ groups $P_{J'}$ are identified as a single subgroup of $G$. Now given two blocks $C \leqslant C'$ define the map \[\psi_{C'C} \colon S_C \to S_{C'}\] as the inclusion of the corresponding groups $P_J \leqslant P_{J'}$ seen as subgroups of $G$. Note that $G(\mR)$ is thin by construction.
\end{definition}

One easily verifies that $G(\mR)$ is strictly developable with  fundamental group isomorphic to $G$. Moreover, the surjection $\pi \colon \mQ \to \mR$ induces a morphism of simple complexes of groups $G(\mQ) \to G(\mR)$ which in turn induces an isomorphism on the fundamental groups (see \cite[Chapter~II.12]{BH} for a background on morphisms of simple complexes of groups). Finally, if $G(\mQ)$ is thin, then by definition $\mR$ is isomorphic to $\mQ$, and the morphism $G(\mQ) \to G(\mR)$ is an isomorphism.

\subsection{Bestvina complex}

\begin{definition}\label{def:upperlink}
  Let $(X, \{X_J\}_{J \in Q})$ be a panel complex over a poset $\mQ$. For an element $J \in \mathcal Q$ define the subcomplex $X_{>J}$ of $X$ by \[X_{>J}=\cup_{J < J'} X_{J'}.\]
\end{definition}

\begin{remark}
  In the case where $X=K$ is the standard panel complex over $\mQ$ we have \[K_{>J}= \abs{\{ J' \in \mQ \mid J' > J\}}.\]
\end{remark}

Observe that Theorem~\ref{thm:scogisfund} may be seen as evidence that the standard panel complex and the associated Basic Construction occur naturally. However, for computational purposes, a better suited panel complex is the following.

\begin{definition}[Bestvina complex]\label{def:genbestcx} 
  The {\it Bestvina panel complex} $(\B, \{\pan{\B}{J}\}_{J \in \mQ})$ is defined as follows. For every maximal element $J\in \mQ$, define $\pan{\B}{J}$ to be a point. Now given an element $J \in \mQ$ assume that for all $J'$ with $J < J'$ the panel $\pan{\B}{J'}$ has already been defined. Define $\pan{\B}{J}$ to be the compact contractible polyhedron  containing $\link{\B}{J}=\flink{\B}{J}$ of the smallest possible dimension.

  We define $B^{\Z}$ in the same way as $B$ except that we replace `contractible' by `acyclic' polyhedra. 
\end{definition}

\begin{remark} The panel complex $B$ was introduced by Bestvina in \cite{Best} for the poset of special subgroups of a finitely generated Coxeter group. It was extended to graph products of finite groups by Harlander and Meinert in  \cite{HarMei} and more generally to buildings that admit a chamber transitive action of a discrete group by Harlander  in \cite{Harland}.
\end{remark}

\begin{example}Consider finite groups $A,B,C$ with two inclusions $A \leqslant B$ and $A \leqslant C$. Consider two subgroups $E$ and $D$ of $C$, both containing the image of $A \leqslant C$. All inclusions are assumed to be proper.  Figure~\ref{fig:thinningandbestvina} depicts a complex of groups $G(\mQ)$ (where all the structure maps are the respective inclusions), its thinning $G(\mR)$ and the Bestvina complex associated to $\mR$. The fundamental group of $G(\mQ)$ (and hence of $G(\mR)$) is isomorphic to the amalgamated product $B\ast _{A}C$. Observe that poset $\mR$ has significantly fewer elements than $\mQ$. A further simplification is given by the Bestvina complex, whose dimension is lower than the dimension of $\abs{\mQ}$ and  $\abs{\mR}$. The Basic Construction $D(\B, G(\mathcal{R}))$ is isomorphic to the Bass-Serre tree of $B\ast _{A}C$.
\begin{figure}[!h]
  \centering
    \begin{tikzpicture}[scale=1]

      \definecolor{vlgray}{RGB}{230,230,230}
      \definecolor{red}{RGB}{100,150,000}

      %\definecolor{tred}{RGB}{250,185,135}
      \definecolor{tgreen}{RGB}{250,125,000}
      \definecolor{lred}{RGB}{150,200,050}

      \definecolor{blue}{RGB}{000,50,200}

      %% COMPLEX OF GROUPS

      \begin{scope}

      \node [right]  at (-4,3.25)  {$a)$ $\mQ$ and $G(\mQ)$};

      \draw[white, pattern=custom north west lines,hatchspread=6pt,hatchthickness=1.5pt,hatchcolor=yellow!50] (-2,0)--(2,0)--(4,0)--(4,1)--(5,1.5)--(4,3)--(0,3)--(-2,0);
      %\draw[red!30,fill=cyan!10] (-2,0)--(2,0)--(4,1)--(4,3)--(0,3)--(-2,0);
      %\draw[pattern=north west lines, pattern color=red!30] (-2,0)--(2,0)--(4,1)--(4,3)--(0,3)--(-2,0);

      %%basic subdivided shape
      \draw[red!30,line width=4] (-2,0) to (0,0);

      \draw[red!30,line width=4]  (-2,0) to (-1,1.5);

      \draw[red!30,line width=4]  (2,0) to (0,0);

      \draw[red!30,line width=4]  (2,0) to (1,1.5);

      \draw[red!30,line width=4]  (0,3) to (-1,1.5);
      \draw[red!30,line width=4]  (0,3) to (1,1.5);

      \draw[ ->-] (-2,0) to (0,0);
      \draw[->-] (-2,0) to (0,1);
      \draw[ ->-] (-2,0) to (-1,1.5);

      \draw[ ->-] (2,0) to (0,0);
      \draw[->-] (2,0) to (0,1);
      \draw[->-] (2,0) to (1,1.5);

      \draw[->-] (0,3) to (0,1);
      \draw[->-] (0,3) to (-1,1.5);
      \draw[->-] (0,3) to (1,1.5);

      \draw[->-] (0,0) to (0,1);
      \draw[->-] (-1,1.5) to (0,1);
      \draw[->-] (1,1.5) to (0,1);

      \draw[fill=red, red] (-2,0)   circle [radius=0.06];

      \draw[fill=red, red] (0,0)   circle [radius=0.06];

      \draw[fill=red, red] (2,0)   circle [radius=0.06];

      \draw[fill=tgreen, tgreen] (0,1)   circle [radius=0.06];

      \draw[fill=red, red] (-1,1.5)   circle [radius=0.06];

      \draw[fill=red, red] (1,1.5)   circle [radius=0.06];

      \draw[fill=red, red] (0,3)   circle [radius=0.06];

      \node [below ,red]   at (-2,0)  {$A$};

      \node [below ,red]   at (0,0)  {$A$};

      \node [below ,red]   at (2,0)  {$A$};

      \node [below left ,tgreen]   at (0.098,0.99)  {$B$};

      \node [below ,red]   at (-1,1.5)  {$A$};

      \node [below left,red]   at (1.1,1.5)   {$A$};

      \node [above ,red]   at (0,3)  {$A$};

      %%endbasic subdivided shape

      \begin{scope}[shift={(2,3)},rotate={180}]

      %%basic subdivided shape

      \draw[red!30,line width=4] (-2,0) to (0,0);
      \draw[red!30,line width=4] (-2,0) to (0,1);
      \draw[red!30,line width=4] (-2,0) to (-1,1.5);

      \draw[red!30,line width=4] (2,0) to (0,0);
      \draw[red!30,line width=4] (2,0) to (0,1);
      \draw[red!30,line width=4] (2,0) to (1,1.5);

      \draw[red!30,line width=4] (0,3) to (0,1);
      \draw[red!30,line width=4] (0,3) to (-1,1.5);
      \draw[red!30,line width=4] (0,3) to (1,1.5);

      \draw[red!30,line width=4] (0,0) to (0,1);
      \draw[red!30,line width=4] (-1,1.5) to (0,1);
      \draw[red!30,line width=4] (1,1.5) to (0,1);

      \draw[ ->-] (-2,0) to (0,0);
      \draw[->-] (-2,0) to (0,1);
      \draw[ ->-] (-2,0) to (-1,1.5);

      \draw[ ->-] (2,0) to (0,0);
      \draw[->-] (2,0) to (0,1);
      \draw[->-] (2,0) to (1,1.5);

      \draw[->-] (0,3) to (0,1);
      \draw[->-] (0,3) to (-1,1.5);
      \draw[->-] (0,3) to (1,1.5);

      \draw[->-] (0,0) to (0,1);
      \draw[->-] (-1,1.5) to (0,1);
      \draw[->-] (1,1.5) to (0,1);

      \draw[fill=red, red] (-2,0)   circle [radius=0.06];

      \draw[fill=red, red] (0,0)   circle [radius=0.06];

      \draw[fill=red, red] (2,0)   circle [radius=0.06];

      \draw[fill=red, red] (0,1)   circle [radius=0.06];

      \draw[fill=red, red] (-1,1.5)   circle [radius=0.06];

      \draw[fill=red, red] (1,1.5)   circle [radius=0.06];

      \draw[fill=red, red] (0,3)   circle [radius=0.06];

      \node [above ,red]   at (-2,0)  {$A$};
      \node [above ,red]   at (0,0)  {$A$};

      \node [above ,red]   at (2,0)  {$A$};

      \node [below left ,red]   at (-0.06 ,1.02)  {$A$};

      \node [below ,red]   at (-1,1.5)  {$A$};

      \node [below ,red]   at (0,3)  {$A$};

      %%endbasic subdivided shape

      \end{scope}

      \begin{scope}[shift={(4,0)}]

      %%basic subdivided shape

      \draw[ ->-] (-2,0) to (0,0);
      \draw[->-] (-2,0) to (0,1);
      \draw[ ->-] (-2,0) to (-1,1.5);

      \draw[ ->-] (2,0) to (0,0);
      \draw[->-] (2,0) to (0,1);
      \draw[->-] (2,0) to (1,1.5);

      \draw[->-] (0,3) to (0,1);
      \draw[->-] (0,3) to (-1,1.5);
      \draw[->-] (0,3) to (1,1.5);

      \draw[->-] (0,0) to (0,1);
      \draw[->-] (-1,1.5) to (0,1);
      \draw[->-] (1,1.5) to (0,1);

      \draw[fill=red, red] (-2,0)   circle [radius=0.06];

      \draw[fill=brown, brown] (0,0)   circle [radius=0.06];

      \draw[fill=red, red] (2,0)   circle [radius=0.06];

      \draw[fill=blue, blue] (0,1)   circle [radius=0.06];

      \draw[fill=red, red] (-1,1.5)   circle [radius=0.06];

      \draw[fill=violet,violet] (1,1.5)   circle [radius=0.06];

      \draw[fill=red, red] (0,3)   circle [radius=0.06];

      \node [below ,red]   at (2,0)  {$A$};

     \node [below left ,blue]   at (0.098,0.99)  {$C$};

      \node [below ,violet]   at (0.92,1.5)   {$E$};

      \node [below ,brown]   at  (0,0)  {$D$};

      %%endbasic subdivided shape

      \end{scope}

      \end{scope}

      %% THINNERD COMPLEX OF GROUPS!

      \begin{scope}[shift={(0,-3)}]

      \node [right]  at (-4,1.5)  {$b) $ $\mR$ and $G(\mR)$};

      \begin{scope}[shift={(4,0)}]

      %%basic subdivided shape

      \draw[ ->-] (-2,0) to (0,0);
      \draw[->-] (-2,0) to (0,1);

      \draw[ ->-] (2,0) to (0,0);
      \draw[->-] (2,0) to (0,1);
      \draw[->-] (2,0) to (1,1.5);

      \draw[->-] (-2,0) to (-4,1);

      \draw[->-] (0,0) to (0,1);

      \draw[->-] (1,1.5) to (0,1);

      \draw[fill=red, red] (-2,0)   circle [radius=0.06];

      \draw[fill=brown, brown] (0,0)   circle [radius=0.06];

      \draw[fill=red, red] (2,0)   circle [radius=0.06];

      \draw[fill=blue, blue] (0,1)   circle [radius=0.06];

      \draw[fill=violet,violet] (1,1.5)   circle [radius=0.06];

      \draw[fill=tgreen, tgreen] (-4,1)   circle [radius=0.06];

      \node [below ,red]   at (2,0)  {$A$};
      \node [below ,red]   at (-2,0)  {$A$};

        \node [below left ,blue]   at (0.098,0.99)  {$C$};

      \node [below ,violet]   at (0.92,1.5)   {$E$};
      \node [below ,brown]   at  (0,0)  {$D$};

      \node [below left ,tgreen]   at (-3.92,0.99)  {$B$};

      %%endbasic subdivided shape

      \end{scope}

      \end{scope}

      %%BESTVINA CX

      \begin{scope}[shift={(0,-6)}]

      \node [right]  at (-4,1.75)  {$c) $ Bestvina complex for $\mR$};

      \begin{scope}[shift={(4,0)}]

      %\draw[red!30,line width=4] (-4,1) to (0,1);

      \draw[red!40,line width=4] (-4,1) to (0,1);

      %\draw (-4,1) to (0,1);

      \draw[fill=blue, blue] (0,1)   circle [radius=0.06];

      \draw[fill=tgreen, tgreen] (-4,1)   circle [radius=0.06];

      \node [below ,red]   at (-2,1)  {$A$};

      \node [below ,blue]   at (0,1)  {$C$};

      \node [below ,tgreen]   at (-4,1)  {$B$};

      \end{scope}

      \end{scope}

    \end{tikzpicture}
  \caption{Complex of groups $G(\mQ)$ together with its thinning $G(\mR)$ and the Bestvina complex associated to $\mR$. Elements of a block $[A]_1\subset \mQ$ with the local group $A$ are connected by green lines. The geometric realisation $\abs{\mQ_{\geqslant {[A]_1}}}$ is in yellow.}
  \label{fig:thinningandbestvina}
\end{figure}
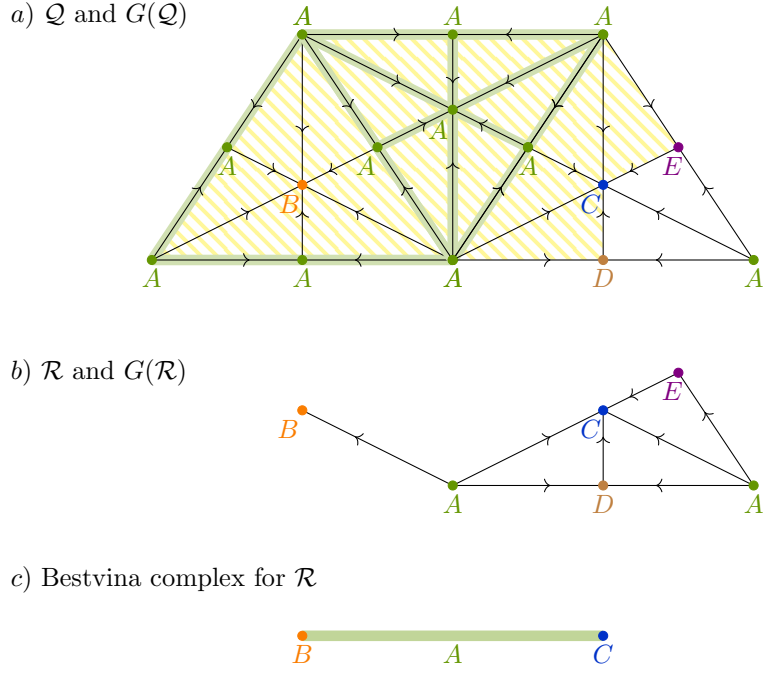 
\end{example}

The proof of the following proposition follows directly from Lemmas 2.4 and 2.5 of \cite{PePry}.
  
\begin{proposition}\label{prop:genhomotop}
  Let $G(\mathcal Q)$ be a strictly developable simple complex of groups and let $\psi \colon G(\mathcal Q) \to G$ be a simple morphism that is injective on local groups. Assume that $G(\mathcal Q)$ is thin. Then:
  \begin{enumerate}
  	\item \label{it:genhomotopspaces} the standard development $D(K, G(\mQ), \psi)$  and the Bestvina complex $D(\B, G(\mQ),\psi)$ are $G$-homotopy equivalent,
  	\item \label{it:genhomotopchains} The Bredon chain complexes ${C}^{\mF}_{\ast}(D(K, G(\mathcal{Q}),\psi))$ and ${C}^{\mF}_{\ast}(D(\B^{\Z}, G(\mathcal{Q}),\psi))$ are chain homotopy equivalent.
  \end{enumerate}
\end{proposition}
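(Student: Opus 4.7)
The plan is to build, by reverse induction on $\mQ$, a panel-preserving map $f \colon \B \to K$ (respectively $f^{\Z} \colon \B^{\Z} \to K$) that is a homotopy equivalence on each panel (respectively induces a chain homotopy equivalence on each panel), and then to promote it to the level of Basic Constructions by $G$-equivariant assembly.

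Because $D(-, G(\mQ), \psi)$ is functorial in panel-preserving maps of panel complexes, any $f \colon \B \to K$ with $f(\B_J) \subseteq K_J$ for every $J \in \mQ$ induces a well-defined $G$-equivariant map $\tilde f \colon D(\B, G(\mQ), \psi) \to D(K, G(\mQ), \psi)$ by $\tilde f([g,x]) = [g, f(x)]$, the well-definedness being exactly the statement that $f$ preserves panels. For part (1) I would run the inductive step as follows. For $J$ maximal in $\mQ$, both $\B_J$ and $K_J = \abs{\mQ_{\geqslant J}}$ are points, so $f|_{\B_J}$ is determined. For a general $J \in \mQ$, assume $f$ has been defined on $\B_{J'}$ for all $J' > J$; these glue to a map $\flink{\B}{J} \to K_{>J}$ which is a homotopy equivalence by induction and a routine Mayer--Vietoris argument on the panel poset. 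Since $\B_J$ is by definition a compact contractible polyhedron containing $\flink{\B}{J}$, and $K_J = \abs{\mQ_{\geqslant J}}$ is contractible (as the realisation of a poset with a minimum element $J$), the map extends to a homotopy equivalence $\B_J \to K_J$. The induced $\tilde f$ is then a $G$-homotopy equivalence by the equivariant Whitehead theorem: for each $H \in \mF$, the $H$-fixed point set of $D(\B, G(\mQ), \psi)$ is a union of (translates of) subcomplexes $\cup \{\B_J \mid g^{-1}Hg \leqslant P_J\}$, and on each such subcomplex $\tilde f$ restricts to the panel-preserving equivalence just built.

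Part (2) follows by precisely the same inductive scheme, with contractibility of $\B^{\Z}_J$ weakened to acyclicity and the extension step supplied by the acyclic carrier theorem, which produces a chain homotopy equivalence $C_\ast(\B^{\Z}_J) \to C_\ast(K_J)$ compatible with those already constructed on $\flink{\B^{\Z}}{J}$. Assembling orbit-by-orbit and restricting to $H$-fixed chain complexes yields the desired chain homotopy equivalence of Bredon chain complexes; a genuine topological equivalence is unavailable here because acyclic polyhedra need not be contractible. The main technical point, carried out in \cite[Lemmas~2.4 and 2.5]{PePry}, is the consistency of the inductive step on overlaps $\B_{J_1} \cap \B_{J_2}$ for incomparable $J_1, J_2$. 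This is exactly where thinness of $G(\mQ)$ enters: it guarantees that every strict inequality in $\mQ$ reflects a proper inclusion of local groups, so that no two panels are accidentally identified and the inductive data glues unambiguously along the combinatorial intersections predicted by the poset.
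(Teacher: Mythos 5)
Your overall mechanism is the right one and, as far as one can tell, is essentially how the cited result is proved: a panel-preserving map $f\colon \B\to K$ (resp.\ a chain map carried by panels for $\B^{\Z}$) induces a $G$-map of Basic Constructions by $[g,x]\mapsto[g,f(x)]$ (well-defined because $J(f(x))\geqslant J(x)$, so $P_{J(x)}\leqslant P_{J(f(x))}$), and such a map is built by downward induction on $\mQ$ using contractibility (resp.\ acyclicity) of the panels. The paper itself gives no argument beyond the citation of \cite[Lemmas~2.4 and 2.5]{PePry}, so your proposal is an attempted expansion of exactly those lemmas rather than a different route.

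However, the decisive step is asserted rather than proved, and the role you assign to thinness is misplaced. The fixed set $D(\B,G(\mQ),\psi)^H$ is the union of the closed contractible pieces $[g,\B_J]$ over pairs with $g^{-1}Hg\leqslant P_J$ (similarly for $K$), and a map that restricts to a homotopy equivalence on every member of a closed cover need not be an equivalence on the union: you must also control the intersections. Here $[g,X_J]\cap[g',X_{J'}]$ is the union of the pieces $[g,X_U]$ with $U\geqslant J,J'$ and $g^{-1}g'\in P_U$; this indexing is dictated purely by the coset/local-group data and is therefore the same for $\B$ and for $K$, and one then needs a genuine gluing (homotopy colimit) argument in part (1), and a Mayer--Vietoris-type argument together with the fact that a homology isomorphism between complexes of free $\orb$-modules is a chain homotopy equivalence in part (2) (the acyclic-carrier construction must be run equivariantly, e.g.\ with the carrier $[g,\tau]\mapsto[g,K_{J(\tau)}]$, to produce a morphism of Bredon chain complexes at all). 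That union-level argument, not the well-definedness of $f$ on overlaps, is the substance of \cite[Lemmas~2.4--2.5]{PePry}: the overlaps $\B_{J_1}\cap\B_{J_2}$ for incomparable $J_1,J_2$ are unions of panels $\B_T$ with $T>J_1,J_2$, which your downward induction has already handled, so consistency there is automatic and uses nothing about local groups. In particular thinness is not what makes this step work (it is a standing hypothesis inherited from \cite{PePry}, where it is built into the definition, and it is what later makes the Bestvina complex dimension-minimal); your claim that it prevents panels from being ``accidentally identified'' does not correspond to anything in the construction, since the Basic Construction never identifies points of the fundamental domain. A final small point: your appeal to Mayer--Vietoris to see that $\B_{>J}\to K_{>J}$ is a homotopy equivalence proves at most a homology isomorphism (and is not needed for the extension step, since any map between the contractible complexes $\B_J$ and $K_J$ is an equivalence); and the equivariant Whitehead theorem requires weak equivalences on $X^H$ for all $H\leqslant G$, which is harmless only because both fixed sets are empty when $H$ is not subconjugate to a local group — worth saying explicitly.
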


\begin{definition}[Local cohomological dimension]\label{def:dimbestv}
  For a poset $\mathcal Q$ define its \emph{local cohomological dimension} $\dB{Q}$ as follows
  \begin{equation*}
    \dB{Q}= \mathrm{max}\{n \in \mathbb{N} \mid \rH^{n-1}\big(\link{K}{J}\big) \neq 0 \text{ for some } J \in \mQ \}.
  \end{equation*}
\end{definition}

\begin{proposition}\label{prop:gendimbest} 
  We have the following equalities  
  \begin{align*}
                \dB{Q}  &= \mathrm{max}\{n \in \mathbb{N} \mid H^{n}(K_{J}, K_{> J}) \neq 0 \text{ for some } J \in \mQ \}\\
                        &= \mathrm{max}\{n \in \mathbb{N} \mid \rH^{n-1}(K_{> J}) \neq 0 \text{ for some } J \in \mQ \}\\
                        &= \mathrm{max}\{n \in \mathbb{N} \mid \rH^{n-1}(B_{> J}) \neq 0 \text{ for some } J \in \mQ \}\\
                        &=\dim (B^{\Z})\\
                        &=\mathrm{max}\{n \in \mathbb{N} \mid H^{n}(B^{\Z}_{J}, B^{\Z}_{> J}) \neq 0 \text{ for some } J \in \mQ \}.
  \end{align*}
  Moreover, 
  \[\mathrm{dim}(\B) = \left\{\begin{array}{lcr}

                                \dB{Q}          & \text{ if } & d \neq 2,\\
                                2 \text{ or } 3 & \text{ if } & d =2. \\

                              \end{array}\right.\]
\end{proposition}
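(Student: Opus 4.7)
The plan is to reduce the five displayed expressions for $\dB{Q}$ to a single topological invariant via a cohomology equivalence between the upper-link subcomplexes of the three panel complexes $K$, $B$, and $B^{\mathbb{Z}}$, and then handle the dimension formulas via a standard minimal-dimension argument.

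\textbf{Key step: upper-link equivalence.} By reverse induction on $J\in\mathcal Q$, I would first prove that the inclusions induce isomorphisms
\[
\tilde H^*(K_{>J})\;\cong\;\tilde H^*(B^{\mathbb{Z}}_{>J})\;\cong\;\tilde H^*(B_{>J}).
\]
For maximal $J$ all three complexes are empty. For the inductive step, decompose $K_{>J}=\bigcup_{J<J'}K_{J'}$ and likewise for $B^{\mathbb{Z}}_{>J}$ and $B_{>J}$. Each panel $K_{J'}$ is contractible (since $\mathcal Q_{\geqslant J'}$ has minimum element $J'$), each $B^{\mathbb{Z}}_{J'}$ is acyclic, and each $B_{J'}$ is contractible, by their defining properties. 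The Mayer--Vietoris spectral sequence for the cover by panels expresses the cohomology of each union in terms of the cohomology of iterated intersections, which are themselves upper links at strictly higher elements of $\mathcal Q$; the inductive hypothesis then yields an isomorphism of spectral sequences from the $E_2$-page onward.

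\textbf{From the key step to the displayed equalities.} The three reduced-cohomology expressions agree by the key step. The two relative-cohomology expressions, involving the pairs $(K_J,K_{>J})$ and $(B^{\mathbb{Z}}_J,B^{\mathbb{Z}}_{>J})$, follow from the long exact sequence of each pair combined with contractibility of $K_J$ and acyclicity of $B^{\mathbb{Z}}_J$, which yield the connecting isomorphism $H^n(K_J,K_{>J})\cong\tilde H^{n-1}(K_{>J})$ and its $B^{\mathbb{Z}}$-analogue.

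\textbf{Dimension formulas.} For $\dim(B^{\mathbb{Z}})$ I would invoke the standard fact that the minimum dimension of a finite acyclic polyhedron containing a finite polyhedron $Y$ equals $\max\bigl(\dim Y,\,1+\max\{k:\tilde H^k(Y)\neq 0\}\bigr)$. A straightforward induction along $\mathcal Q$ then gives $\dim(B^{\mathbb{Z}}_J)\leqslant\dB{Q}$ for all $J$, with equality realised at some $J$ witnessing the maximum. For $\dim(B)$ the analogous fact for contractible containers gives the same formula except in the boundary case where one must kill only first cohomology: there, a minimal contractible extension of an at-most-$2$-dimensional $Y$ with $\tilde H^1(Y)\neq 0$ and $\tilde H^{\geqslant 2}(Y)=0$ may have dimension $2$ or $3$ depending on whether the $2$-cells added to kill $\tilde H^1$ leave a simply connected result; this accounts precisely for the $2$-or-$3$ dichotomy at $d=2$.

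\textbf{Main obstacle.} The principal difficulty is the key step, namely matching Mayer--Vietoris data across the three panel complexes. Although $K$ has an explicit simplicial description, the panels of $B$ and $B^{\mathbb{Z}}$ are defined only up to a minimality choice, so one must argue that their intersection patterns faithfully replicate that of $K$. This should follow from the fact that all three are panel complexes over $\mathcal Q$ satisfying Definition~3.4, so the iterated intersections of panels are indexed by the same subsets of $\mathcal Q$ and are themselves upper links to which the inductive hypothesis applies.
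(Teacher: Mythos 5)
Your overall strategy (downward induction over $\mQ$ comparing the upper links in $K$, $B$ and $B^{\Z}$, then the long exact sequences of the pairs $(K_J,K_{>J})$ and $(B^{\Z}_J,B^{\Z}_{>J})$, then a minimal acyclic/contractible extension lemma for the dimension statements) is the same in spirit as the argument the paper imports from the proof of Proposition~3.4 of \cite{PePry}. But your key step contains a genuine gap: it is not true that iterated intersections of panels are upper links at strictly higher elements of $\mQ$. What the panel-complex axioms do give is that for any panel complex $X$ over $\mQ$ one has $X_{J_1}\cap\dots\cap X_{J_p}=\bigcup\{X_V \mid V\geqslant J_1,\dots,J_p\}$, so the intersection patterns of $K$, $B$, $B^{\Z}$ agree combinatorially; however, this union over the set of common upper bounds is in general not of the form $X_{>W}$ for any single $W\in\mQ$. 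For example, let $\mQ$ have a minimum $x$, two elements $a,b>x$, and maximal elements $c,d,e,f$ with $a<c,d,e$ and $b<c,d,f$: in the cover of $K_{>x}$ by panels, the intersection $K_a\cap K_b=K_c\cup K_d$ is not $K_{>W}$ for any $W$. Hence the $E_1$-terms of your Mayer--Vietoris spectral sequence are not covered by your inductive hypothesis and the induction as stated does not close. The repair is to strengthen the inductive statement to all unions of panels $K_\Omega=\bigcup_{J\in\Omega}K_J$ over upward-closed subsets $\Omega\subseteq\mQ$ (these are exactly the common-upper-bound sets), splitting off the panel of a minimal element of $\Omega$ and using ordinary Mayer--Vietoris.

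A second, related problem is your phrase ``the inductive hypothesis then yields an isomorphism of spectral sequences from the $E_2$-page onward'': unnatural isomorphisms of $E_1$- or $E_2$-terms determine neither the $d_1$-differentials (alternating sums of restriction maps) nor the abutment, so one needs an actual comparison morphism. For $B$ this is obtained by constructing a panel-preserving map $K\to B$ by induction over $\mQ$, using contractibility of the panels $B_J$ to extend over each panel; for $B^{\Z}$ acyclicity is not enough to extend maps of spaces, and the comparison must be made at the chain level (this is the role of Theorem~A.1 of \cite{PePry}). Once such a comparison is in place, your remaining steps are sound: the relative groups are identified via the long exact sequences using contractibility of $K_J$, $B_J$ and acyclicity of $B^{\Z}_J$, and your dimension argument is correct as stated \emph{because} you formulated the extension lemma with cohomology (torsion in top homology is what forces the extra dimension), with the familiar $2$-or-$3$ ambiguity only in the contractible case.
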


\begin{proof} 
  The proof is essentially the same as the proof of \cite[Proposition 3.4]{PePry}.
\end{proof}

\begin{lemma}\label{lem:homot} 
  Let $G(\mathcal Q)$ be a strictly developable simple complex of groups with fundamental group $G$ and let $\mF$ be the family generated by local groups. Suppose $D(\D, G(\mathcal{Q}))$ is a model for $\EFG$. Let $\mR$ be the corresponding block poset. Then $ \fcd G\leqslant \dB{R}$. In particular, if $G(\mathcal Q)$ is thin then $ \fcd G\leqslant \dB{Q}$.
\end{lemma}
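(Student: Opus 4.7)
The plan is to reduce to the thin case via the thinning procedure, and then invoke Proposition~\ref{prop:genhomotop}(2) together with Proposition~\ref{prop:gendimbest}. First, apply thinning to $G(\mathcal{Q})$ to produce a thin simple complex of groups $G(\mathcal{R})$ over the block poset $\mathcal{R}$ with the same fundamental group $G$ and the same family $\mF$: the local groups of $G(\mathcal{R})$ form a subcollection of those of $G(\mathcal{Q})$ but they generate the same family, since local groups within a single block coincide as subgroups of $G$. The ``in particular'' clause will then follow from the general statement because $\mathcal{R}=\mathcal{Q}$ whenever $G(\mathcal{Q})$ is already thin.

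Next, the poset surjection $\pi \colon \mathcal{Q} \to \mathcal{R}$ induces a natural $G$-equivariant cellular map
\[
\Psi \colon D(K, G(\mathcal{Q})) \longrightarrow D(K_{\mathcal{R}}, G(\mathcal{R})), \qquad [g,x] \longmapsto [g, |\pi|(x)].
\]
Well-definedness follows from the key observation that for every $x \in K$ we have $J'(|\pi|(x)) = \pi(J(x))$ (since $\pi$ is order preserving, the minimum vertex of the image simplex is the image of the minimum vertex of the simplex containing $x$), combined with the identification $S_{\pi(J(x))} = P_{J(x)}$ as subgroups of $G$ built into the thinning construction. I expect the main obstacle to be the step of showing that $D(K_{\mathcal{R}}, G(\mathcal{R}))$ is itself a model for $\EFG$, that is, that the fixed-point set $D(K_{\mathcal{R}}, G(\mathcal{R}))^H$ is contractible for every $H \in \mF$. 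The plan here is to analyze the restricted map $\Psi^H$ whose fibers are governed by the block structure of $\mathcal{Q}$, and transfer contractibility of the source $D(K, G(\mathcal{Q}))^H$, which holds by the hypothesis that $D(K, G(\mathcal{Q}))$ is $\EFG$, to contractibility of the target.

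Once this is established, Proposition~\ref{prop:genhomotop}(2) applied to the thin complex $G(\mathcal{R})$ yields a chain homotopy equivalence
\[
C^{\mF}_*(D(K_{\mathcal{R}}, G(\mathcal{R}))) \simeq C^{\mF}_*(D(B^{\mathbb{Z}}, G(\mathcal{R})))
\]
of chain complexes of finitely generated free $\orb$-modules. Since $D(K_{\mathcal{R}}, G(\mathcal{R}))$ is a model for $\EFG$, its augmented Bredon chain complex is a free resolution of $\underline{\mathbb{Z}}$; by the chain homotopy equivalence, so is the augmented complex of $D(B^{\mathbb{Z}}, G(\mathcal{R}))$. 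The latter has length $\dim B^{\mathbb{Z}} = \dB{R}$ by Proposition~\ref{prop:gendimbest}, whence $\fcd G \leqslant \dB{R}$. The thin case gives $\fcd G \leqslant \dB{Q}$.
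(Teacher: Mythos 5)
Your reduction to the block poset and your use of Proposition~\ref{prop:genhomotop}(2) together with Proposition~\ref{prop:gendimbest} are sound in outline, but the step you yourself flag as ``the main obstacle'' is exactly where the proof is missing: you never establish that $D(|\mR|, G(\mR))$ is a model for $\EFG$, you only announce a plan to ``transfer contractibility'' of the fixed-point sets along $\Psi^H$. This is not a routine verification. Without the $\EFG$ hypothesis the collapse map $D(\D, G(\mathcal{Q})) \to D(|\mR|, G(\mR))$ can change the $G$-homotopy type drastically (e.g.\ if all local groups coincide and all structure maps are isomorphisms, the whole of $K$ is a single block and the target degenerates to the discrete set $G/P$, while the source is $G/P \times K$), so any argument must use the hypothesis in an essential way; and contractibility of $H$-fixed sets does not simply push forward along a map whose point preimages are realisations of blocks --- one would need something like a fibre-wise (equivariant Quillen Theorem~A type) contractibility statement for preimages of closed stars, which is precisely what is not proved. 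So as written the argument has a genuine gap at its central step, and everything after it (free resolution of $\underline{\Z}$ of length $\dim B^{\Z}=\dB{R}$, hence $\fcd G \leqslant \dB{R}$) is conditional on it.

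The paper's proof shows how to avoid this obstacle altogether, and this is the instructive difference. One never claims that $D(|\mR|, G(\mR))$ or $D(B^{\Z}, G(\mR))$ is a model for $\EFG$. Instead one works purely on the chain level: compose ${C}^{\mF}_{\ast}(D(\D, G(\mathcal{Q})))\to {C}^{\mF}_{\ast}(D(|\mR|, G(\mathcal{R})))\to {C}^{\mF}_{\ast}(D(B^{\Z}, G(\mR)))$, where the first map is induced by your map $\Psi$ (equivalently by the morphism $G(\mQ)\to G(\mR)$) and the second is the chain homotopy equivalence for the thin complex $G(\mR)$. Then, since $D(B^{\Z}, G(\mR))$ is a $G$-CW-complex with stabilisers in $\mF$ and $D(\D, G(\mathcal{Q}))$ \emph{is} a model for $\EFG$, the universal property provides a classifying $G$-map back, hence a chain map ${C}^{\mF}_{\ast}(D(B^{\Z}, G(\mR)))\to {C}^{\mF}_{\ast}(D(\D, G(\mathcal{Q})))$; the composite is a self-map of a free resolution of $\underline{\Z}$ lifting the identity, so it is chain homotopic to the identity. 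Thus $H^{\ast}_{\mF}(G;M)$ is a retract of the cohomology of a complex concentrated in degrees $\leqslant \dim B^{\Z}=\dB{R}$, giving $\fcd G\leqslant \dB{R}$ without ever knowing whether the thinned development has contractible fixed-point sets. If you want to salvage your route, you would have to supply the fixed-point argument in full; otherwise, replace that step by the retraction argument above.
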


\begin{proof} 
  Consider the composition of chain maps $${C}^{\mF}_{\ast}(D(\D, G(\mathcal{Q})))\to {C}^{\mF}_{\ast}(D(T, G(\mathcal{R})))\to {C}^{\mF}_{\ast}(D(B^{\Z}, G(\mR))),$$
  where $K =\abs{\mQ}$, $T =\abs{\mR}$ and the complex $B^{\Z}$ is taken over poset $\mR$.

  The first map is induced by the map of Basic Constructions $D(\D, G(\mathcal{Q}))\to D(T, G(\mathcal{R}))$, which is in turn induced by a morphism of simple complexes of groups $G(\mQ) \to G(\mR)$. The second map is constructed in \cite[Theorem~A.1]{PePry} (it is straightforward to check that both the statement and the proof of Theorem~A.1 carry through for infinite local groups).

  Since $D(\D, G(\mathcal{Q}))$ is a model for $\EFG$, there is also a classifying $G$-map that  gives a chain map 
  $${C}^{\mF}_{\ast}(D(B^{\Z}, G(\mR)))\to {C}^{\mF}_{\ast}(D(\D, G(\mathcal{Q})))$$
  and the composition of both is chain homotopic to the identity on ${C}^{\mF}_{\ast}(D(\D, G(\mathcal{Q})))$. This shows that $ \fcd G\leqslant \dim (D(B^{\Z}, G(\mR)))= \dim (B^{\Z})$ and Proposition \ref{prop:gendimbest} finishes the proof.
\end{proof}

\section{Thin complexes of groups}\label{sec:minimal}

In this section, we will assume that the simple complex of groups $G(\mathcal Q)$ is thin. We will show that the Bredon cohomological dimension of the fundamental group of $G(\mathcal Q)$ is equal to the  local cohomological dimension of the poset $\mQ$.

\begin{proposition}\label{prop:chains} 

Let $G(\mathcal Q)$ be a thin simple complex of groups, let $G$ be a group, and let $\psi \colon G(\mathcal Q) \to G$ be a simple morphism which is injective on local groups. Suppose $(Y, \{Y_J\}_{J \in Q})$ is a simplicial panel complex over $\mQ$, and let $X=D(Y, G(\mathcal{Q}), \psi)$ be the associated Basic Construction. Then for any $J\in \mathcal Q$, there is an epimorphism of co-chain complexes
  $$\Psi: (C^*_{\mF}(X; \mA_{P_J}), \delta_{\mF}) \twoheadrightarrow (C^*(Y_{J}, Y_{>J}), \delta_J),$$
  where $P_J$ is a local group at $J \in \mQ$, seen as a subgroup of $G$.
\end{proposition}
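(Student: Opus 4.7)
The plan is to unfold the Bredon cochain complex via the Yoneda lemma and define $\Psi$ as a component-wise projection that isolates the coefficient of the identity morphism on cells whose panel type is exactly $J$. Since $Y$ is a strict fundamental domain for the admissible $G$-action on $X = D(Y, G(\mathcal{Q}), \psi)$, the $n$-cells of $Y$ form a complete set of $G$-orbit representatives for the $n$-cells of $X$, with each $e \in Y^{(n)}$ having stabiliser $\psi(P_{J(e)})$, identified with $P_{J(e)} \leqslant G$ via Convention~\ref{conv:localgroupsaresubgroups}. Consequently, as functors on $\orb$,
$$C_n^{\mF}(X) \cong \bigoplus_{e \in Y^{(n)}} \mA_{P_{J(e)}},$$
and Yoneda together with finiteness of $Y^{(n)}$ gives
$$C^n_{\mF}(X; \mA_{P_J}) \cong \bigoplus_{e \in Y^{(n)}} \Z[\mbox{hom}_G(G/P_{J(e)}, G/P_J)].$$

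With this identification I would define $\Psi$ component-wise: for an $n$-cell $e$ of $Y$, set $\Psi(f)(e)$ to be the coefficient of the identity morphism $\mathrm{id}_{G/P_J} \in \mbox{hom}_G(G/P_J, G/P_J)$ in the component $f_e$ whenever $J(e) = J$, and $0$ otherwise. The output is supported on cells $e$ with $J(e) = J$, which are exactly the $n$-cells of $Y_J$ lying outside $Y_{>J}$, so $\Psi(f) \in C^n(Y_J, Y_{>J})$. Surjectivity is immediate: any $c \in C^n(Y_J, Y_{>J})$ is the image of the cochain $f$ with $f_e = c(e) \cdot \mathrm{id}_{G/P_J}$ for $J(e) = J$ and $f_e = 0$ otherwise.

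The heart of the proof is verifying that $\Psi$ commutes with the differentials. For an $(n+1)$-cell $\sigma \in Y$, every boundary cell $e' \in \partial\sigma$ also lies in $Y$ (since $\bar\sigma \subseteq Y$), and the inclusion $e' \subseteq \bar\sigma$ yields $J(\sigma) \leqslant J(e')$, producing a natural inclusion $\iota_{e'}: P_{J(\sigma)} \hookrightarrow P_{J(e')}$ representing a morphism $G/P_{J(\sigma)} \to G/P_{J(e')}$ in $\orb$. Unfolding $\partial \sigma$ through the identification above and transporting via $f$, one obtains
$$(\delta_{\mF} f)_\sigma = \sum_{e' \in \partial \sigma} [\sigma : e'] \cdot \iota_{e'}^* f_{e'},$$
where $\iota_{e'}^* \colon \Z[\mbox{hom}_G(G/P_{J(e')}, G/P_J)] \to \Z[\mbox{hom}_G(G/P_{J(\sigma)}, G/P_J)]$ is precomposition with $\iota_{e'}$.

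For $\sigma$ with $J(\sigma) = J$, I would compute the coefficient of $\mathrm{id}_{G/P_J}$ in each $\iota_{e'}^* f_{e'}$. A morphism $\varphi: G/P_{J(e')} \to G/P_J$, given by $\varphi(P_{J(e')}) = gP_J$ with $g^{-1}P_{J(e')}g \leqslant P_J$, pulls back to $\mathrm{id}_{G/P_J}$ precisely when $g \in P_J$, which forces $P_{J(e')} \leqslant P_J$ and $\varphi = \mathrm{id}$. Combined with $P_J = P_{J(\sigma)} \leqslant P_{J(e')}$, this gives $P_{J(e')} = P_J$, and the \emph{thinness} hypothesis applied to the monomorphism $\phi_{J(e')J}: P_J \to P_{J(e')}$ then forces $J(e') = J$. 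Hence only boundary cells $e'$ with $J(e') = J$ contribute, so
$$\Psi(\delta_{\mF} f)(\sigma) = \sum_{\substack{e' \in \partial \sigma \\ J(e') = J}} [\sigma:e'] \cdot \Psi(f)(e') = \delta_J \Psi(f)(\sigma).$$
For $\sigma$ with $J(\sigma) \neq J$ both sides vanish by construction. The main technical obstacle is the bookkeeping around this final step: correctly unfolding $\delta_{\mF}$ through the Yoneda identification so that one may appeal to thinness exactly once, at the point where the equality $P_J = P_{J(e')}$ of subgroups must be converted into equality of poset elements.
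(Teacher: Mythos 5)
Your proposal is correct and follows essentially the same route as the paper: unfold $C^*_{\mF}(X;\mA_{P_J})$ via Yoneda over the cells of the strict fundamental domain, define $\Psi$ as the projection onto the coefficient of the identity morphism at cells $e$ with $J(e)=J$, verify surjectivity by an explicit preimage, and invoke thinness exactly once to turn the equality of subgroups $P_{J(e')}=P_J$ into the equality $J(e')=J$ of poset elements. The only (harmless) difference is bookkeeping: you expand $\delta_{\mF}$ over the faces of an $(n+1)$-cell of $Y$, which stay in $Y$ and carry trivial twisting, whereas the paper indexes the cofaces in $X$ of a fixed cell and uses strictness of the fundamental domain to control the translating elements.
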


\begin{proof} 
  Fix a dimension $i$, identify $Y$ with a subcomplex of $X$.  Let  $\sigma_j\isc Y$, $j=1, \dots, k$ be the $i$-simplices of $Y$ and denote by $G_{\sigma_j}\in \mF$ the stabiliser of $\sigma_j$. Then by the Yoneda Lemma, we obtain a natural equivalence 
  \begin{align*}
    C^i_{\mF}(X; \mA_{P_J})&= \nathom \big(\bigoplus_{j=1}^k \mA_{G_{\sigma_j}},\mA_{P_J}\big)\\
                            &\cong \bigoplus_{j=1}^k  \Z[\hom_G(G/{G_{\sigma_j}}, G/{P_J})].
  \end{align*}

  Given an $i$-simplex $\sigma\isc Y$ and a morphism  $\varphi:G/{G_{\sigma}}\xrightarrow{x} G/{P_J}: G_{\sigma}\mapsto xP_J$ in the summand indexed by $\sigma$, we define 
    \[\Psi(\varphi) = \left\{ \begin{array}{lll}
                                c^{\sigma} & \text{ if }\;  \sigma \isc Y_{J} \text{, }  G_{\sigma}=P_J   \text{ and } x\in P_J, & \text{ (type I)}\\
                                0          &  \text{otherwise }                                      & \text{ (type II)}
          \end{array} \right.\]
  where ${c^{\sigma}} \in C^i(Y_{J}, Y_{> J})$ equals to $1$ on $\sigma$ and vanishes everywhere else. 

	We claim that $\Psi$ is surjective. To see this, it is enough to note that if $\sigma \isc Y_J$ is an $i$-simplex with stabiliser $G_{\sigma}\gneq P_J$, then by the definition of Basic Construction we have that $\sigma \isc Y_{> J}$.
       
  It is left to check that $\Psi$ commutes with the co-boundary map. First, suppose $\varphi$ is of type II. Then $\delta_J(\Psi(\varphi))=0$. On the other hand, $\delta_{\mF}(\varphi)$ is a chain based at morphisms which are precomposed with $\varphi$ and hence of type II. To see this, suppose  $$\phi: G/{G_{\tau}}\xrightarrow{y^{-1}} G/{G_{\sigma}}\xrightarrow{x} G/{P_J}$$ is such a composition and it is of type I where   $\tau\isc Y$ is an $(i+1)$-simplex such that $y\tau$ contains $\sigma$ as a face.  Since $Y$ is a strict fundamental domain, observe that $y \in G_{\sigma}$. 

  Since $\phi$ is of type I, we must have  $G_{\tau}=P_J$ and $y^{-1}x\in P_J$, which implies that $G_{\sigma}=P_J$. Since now $x\in P_J$, this shows that $\varphi$ is of type I, which is a contradiction.  Therefore, $\Psi(\delta_{\mF}(\varphi))=0$.
        
  Now, suppose $\varphi$ is of type I, i.e.,\ $\varphi=\varphi_{\sigma}:G/{P_J}\xrightarrow{1} G/P_J$ with $P_J$ the stabiliser of $\sigma$. Then 
  \begin{equation}\label{gfirst}
    \delta_J(\Psi(\varphi_{\sigma}))=\delta_J(c^{\sigma}) =\sum_{t=1}^l(-1)^{\text{sgn}(\tau_t)}c^{\tau_t}
  \end{equation}
where $\tau_t\isc Y_J$ contains $\sigma$ as a face. On the other hand,
  \begin{equation}\label{gsecond} 
    \Psi(\delta_{\mF}(\varphi_{\sigma}))=\sum_{s=1}^r(-1)^{\text{sgn}(y_s\tau_s)}\Psi(\varphi_{y_s\tau_s})
  \end{equation} 
  where $y_s\in G$, $\tau_s\isc Y$, $y_s\tau_s$ is an $(i+1)$-simplex containing $\sigma$ as a face and $\varphi_{y_s\tau_s}:G/{G_{\tau_s}}\xrightarrow{y_s^{-1}} G/{P_J}$. Since $Y$ is a strict fundamental domain, $y_s^{-1}\sigma=\sigma$ and hence $y_s\in P_J$. Note that if $\Psi(\varphi_{y_s\tau_s})\ne 0$, then by definition of $\Psi$, we have  $y_s\tau_s\subset Y_J$ and $G_{\tau_s}=P_J$. Therefore, $\tau_s=y_s\tau_s\isc Y_J$ and $\varphi_{y_s\tau_s}=\varphi_{\tau_s}$. In this case, we have $\Psi(\varphi_{\tau_s})=c^{\tau_s}$.
  The claim now follows from equating (\ref{gfirst}) and (\ref{gsecond}).
\end{proof}

\begin{proposition}\label{prop:partial} 
  If  $D(\D, G(\mathcal{Q}), \psi)$ is a model for $\EFG$, then $\fcd G=\dB{Q}$.
\end{proposition}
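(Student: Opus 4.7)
The plan is to prove the two inequalities separately. The upper bound $\fcd G \leqslant \dB{Q}$ is immediate from Lemma~\ref{lem:homot}, since for a thin complex of groups the block poset $\mathcal R$ coincides with $\mathcal Q$. All the work lies in the lower bound, where one must produce, for $n = \dB{Q}$, an element $J \in \mathcal Q$ and a nonzero class in $H^n_{\mF}(G; \mA_{P_J})$.

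The central difficulty is that Proposition~\ref{prop:chains} provides only a cochain-level epimorphism $\Psi$, and such an epimorphism on its own carries no cohomological surjectivity. The strategy to bypass this obstruction is to replace the panel complex $K$ by the Bestvina panel complex $B^{\Z}$: by Proposition~\ref{prop:genhomotop}\ref{it:genhomotopchains}, the Bredon chain complexes of $D(K, G(\mathcal Q), \psi)$ and $D(B^{\Z}, G(\mathcal Q), \psi)$ are chain homotopy equivalent, so the latter also computes $H^*_{\mF}(G; -)$; and by Proposition~\ref{prop:gendimbest} one has $\dim D(B^{\Z}, G(\mathcal Q), \psi) = \dim B^{\Z} = n$, forcing the Bredon cochain group $C^{n+1}_{\mF}(D(B^{\Z}, G(\mathcal Q), \psi); \mA_{P_J})$ to vanish.

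With this dimension collapse in place, the remainder is a direct lift-and-chase. I would apply Proposition~\ref{prop:chains} to the simplicial panel complex $B^{\Z}$ (passing to a simplicial subdivision if necessary, which preserves dimension and the pairs $(B^{\Z}_{J}, B^{\Z}_{> J})$ up to homotopy) to obtain an epimorphism $\Psi \colon C^*_{\mF}(D(B^{\Z}, G(\mathcal Q), \psi); \mA_{P_J}) \twoheadrightarrow C^*(B^{\Z}_{J}, B^{\Z}_{> J})$. By Proposition~\ref{prop:gendimbest} one can choose $J$ with $H^n(B^{\Z}_{J}, B^{\Z}_{> J}) \neq 0$; pick a cocycle $c$ representing a nonzero class and lift it via $\Psi$ to some $\tilde c \in C^n_{\mF}$. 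Vanishing of $C^{n+1}_{\mF}$ forces $\tilde c$ to be a cocycle, and the intertwining $\Psi \circ \delta_{\mF} = \delta_J \circ \Psi$ established inside the proof of Proposition~\ref{prop:chains} implies that any coboundary witness for $\tilde c$ would descend to one for $c$, contradicting $[c] \neq 0$. Hence $[\tilde c] \neq 0$ in $H^n_{\mF}(G; \mA_{P_J})$, giving $\fcd G \geqslant n$ and completing the equality. The only conceptual step beyond the already-established machinery is the observation that passing from $K$ to $B^{\Z}$ trades a cochain-level surjection that is useless in the middle of the complex for one that is decisive at the top, which is the main obstacle the argument is designed to overcome.
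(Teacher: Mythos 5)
Your proposal is correct and follows essentially the same route as the paper's proof: pass to the acyclic Bestvina panel complex $B^{\Z}$ (made simplicial by subdivision), use Proposition~\ref{prop:genhomotop}(\ref{it:genhomotopchains}) together with Proposition~\ref{prop:gendimbest} so that the Bredon cochain groups vanish above degree $\dB{Q}$, and then observe that the cochain-level epimorphism of Proposition~\ref{prop:chains} is automatically surjective on top-degree cohomology, with Lemma~\ref{lem:homot} supplying the reverse inequality. The only detail the paper records that you leave implicit is that, since $D(\D, G(\mathcal{Q}),\psi)$ is a model for $\EFG$ and hence simply connected, Theorem~\ref{thm:scogisfund} identifies $G$ with the fundamental group of $G(\mathcal Q)$, which is what licenses the appeal to Lemma~\ref{lem:homot}.
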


\begin{proof}
  Note that by the assumption $D(\D, G(\mathcal{Q}), \psi)$ is simply connected, and thus by Theorem~\ref{thm:scogisfund} we have that $G$ is necessarily isomorphic to the fundamental group of $G(\mathcal{Q})$. Consider the panel complex $\B^{\Z}$ given in Definition~\ref{def:genbestcx}. By passing to a barycentric subdivision we can assume that $\B^{\Z}$ is a simplicial panel complex. Let $X=D(\B^{\Z}, G(\mathcal{Q}), \psi) $. By Proposition~\ref{prop:genhomotop}(\ref{it:genhomotopchains}) we have that ${C}^{\mF}_{\ast}(D(K, G(\mathcal{Q}),\psi))$ and ${C}^{\mF}_{\ast}(X)$ are chain homotopy equivalent and thus the  latter can be used to compute $\mathrm{H}^n_{\mathcal{F}}(G,-)$.
  
	Now Proposition \ref{prop:gendimbest} implies that there exists $J\in \mathcal Q$ such that $$H^{\dB{Q}}(B^{\Z}_{J}, B^{\Z}_{> J}) \neq 0.$$ Since $C^i_{\mF}(X; \mA_{P_J})=0$ for $i>\dB{Q}$, by Proposition \ref{prop:chains},  $\Psi$ induces an epimorphism $$\Psi^*:H^{\dB{Q}}_{\mF}(X; \mA_{P_J})\to H^{\dB{Q}}(B^{\Z}_{J}, B^{\Z}_{> J}).$$ This shows that $H^{\dB{Q}
  }_{\mF}(X; \mA_{P_J})\neq 0$ and hence, by Lemma \ref{lem:homot}, we obtain $\fcd G=\dB{Q}$.
\end{proof}

\section{Cohomology of simple complexes of groups}\label{sec:cohofscofgs}   

Let $G(\mathcal Q)$ be a simple complex of groups and let $\psi \colon G(\mathcal Q) \to G$ be a simple morphism which is injective on local groups.  Recall that by Convention~\ref{conv:localgroupsaresubgroups}, for any $J \in \mQ$ we identify $P_J$ with $\psi(P_J) \leqslant G$.

 For $ J \in \mQ$ let $\mathcal I_J$ be a complete set of representatives of the set $$\{g\in G \;|\; g^{-1}P_J g = P_U \mbox{ for some  } U \in \mQ \}/{P_J},$$ where $P_J$ acts by left multiplication.

Suppose $\Omega \subseteq \mQ$ is a subset such that $P_U=P_{U'}$ for all $U, U'\in \Omega$. Define  subcomplexes $K_{\Omega}$ and $K_{>\Omega}$ of $K$ to be 
\begin{align*}
  K_{\Omega} & = \abs{ \{ V \in \mathcal Q \mid V \geqslant U \mbox{ for some } U \in \Omega\}},\\
 K_{>\Omega} & = \abs{ \{ V \in \mathcal Q \mid V \geqslant U \mbox{ for some } U \in \Omega   \text{ and } P_V\gneq P_U\}}.
\end{align*}

For $J \in \mQ$ and $g\in G$ define $$\Omega^g_J=\{U\in \mathcal Q \;|\; P_U=g^{-1}P_Jg\}.$$

\begin{proposition}\label{prop:ggchains} 
  Suppose that $G(\mathcal Q)$ is a strictly developable simple complex of groups and let $\psi \colon G(\mathcal Q) \to G$ be a simple morphism which is injective on local groups. Let $X=D(\D, G(\mathcal{Q}), \psi)$ be the associated Basic Construction. Then for any $J\in \mathcal Q$, there is an isomorphism of co-chain complexes$$\Phi: (C^*_{\mF}(X; \mB_{P_J}), \delta_{\mF}) \to \bigoplus_{g\in \mathcal I_J} (C^*(K_{\Omega^g_J}, K_{>\Omega^g_J}), \delta).$$
\end{proposition}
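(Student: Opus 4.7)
The plan is to define the cochain map $\Phi$ directly on generators and verify it is an isomorphism. This follows the template of Proposition~\ref{prop:chains}, but replaces $\mA_{P_J}$ with $\mB_{P_J}$ (which upgrades the epimorphism there to an isomorphism) and accounts for simplices whose stabilisers are $G$-conjugate, but not equal, to $P_J$ via the decomposition indexed by $\mathcal I_J$.

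First I would identify the generators of the left-hand side. By the Yoneda Lemma applied to the free $\orb$-module structure on $C^\mF_i(X)$ coming from the Basic Construction, one has $C^i_\mF(X; \mB_{P_J}) \cong \bigoplus_\sigma \mB_{P_J}(G/G_\sigma)$, where $\sigma$ ranges over $i$-simplices of $K$ and $G_\sigma = P_{J(\sigma)}$, with $J(\sigma) = \min\sigma$. The $\sigma$-summand vanishes unless $P_{J(\sigma)}$ is $G$-conjugate to $P_J$, in which case it is free on the morphisms $\varphi_\sigma^x \in \mbox{isom}_G(G/P_{J(\sigma)}, G/P_J)$ satisfying $x^{-1}P_{J(\sigma)}x = P_J$, parametrised by cosets $xP_J$. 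Setting $g = x^{-1}$, these correspond bijectively to cosets $P_J g$ with $g^{-1}P_J g = P_{J(\sigma)}$, and each such coset has a unique representative in $\mathcal I_J$, which automatically satisfies $J(\sigma) \in \Omega^g_J$.

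Next I would match this with the right-hand side. A direct analysis of the poset $\mQ$ shows that a chain $\sigma = (J_0 < J_1 < \dots < J_i)$ lies in $K_{\Omega^g_J}\setminus K_{>\Omega^g_J}$ if and only if $J_0 = J(\sigma) \in \Omega^g_J$: membership in $K_{\Omega^g_J}$ reduces, via the minimum of the chain, to $J_0 \geqslant U$ for some $U \in \Omega^g_J$, while failing to lie in $K_{>\Omega^g_J}$ forces $P_{J_0} = P_U = g^{-1}P_J g$, giving $J_0 \in \Omega^g_J$. Thus $C^i(K_{\Omega^g_J}, K_{>\Omega^g_J})$ has a basis of dual elements $c^\sigma$ indexed by simplices with $J(\sigma) \in \Omega^g_J$, matching the previous decomposition term by term. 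I then define $\Phi(\varphi_\sigma^x) := c^\sigma$ in the $g$-summand, where $g \in \mathcal I_J$ is the unique representative of the coset $P_J x^{-1}$; this yields the desired bijection of bases.

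Finally I would verify that $\Phi$ commutes with the coboundary. Using the $\orb$-module structure, the $\tau$-component of $\delta_\mF(\varphi_\sigma^x)$ is $\pm\,\mB_{P_J}(G/G_\tau \xrightarrow{e} G/G_\sigma)(\varphi_\sigma^x)$ whenever $\sigma$ is a face of the $(i+1)$-simplex $\tau$ of $K$, with the usual simplicial sign, and is zero otherwise. By the definition of $\mB_{P_J}$ on morphisms, this value vanishes unless $G_\tau = G_\sigma$, in which case it equals $\pm\,\varphi_\tau^x$. Hence $\delta_\mF \varphi_\sigma^x$ is the signed sum of $\varphi_\tau^x$ over $(i+1)$-faces $\tau \succ \sigma$ with $P_{J(\tau)} = P_{J(\sigma)}$. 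Since $g$ depends only on $P_J x^{-1}$, each $\Phi(\varphi_\tau^x)$ lies in the same $g$-summand as $\Phi(\varphi_\sigma^x)$; and $P_{J(\tau)} = g^{-1}P_J g$ is precisely $J(\tau) \in \Omega^g_J$, so $\Phi(\delta_\mF \varphi_\sigma^x)$ equals the simplicial coboundary $\delta c^\sigma$ in $C^{i+1}(K_{\Omega^g_J}, K_{>\Omega^g_J})$. The main subtlety, and the reason for using $\mB_{P_J}$ rather than $\mA_{P_J}$, is that $\mB_{P_J}$ annihilates non-isomorphism morphisms; this is exactly what forces Bredon coboundary contributions to come only from $\tau$ with $G_\tau = G_\sigma$, matching the combinatorial condition for the relative cochain complex on the right and upgrading the epimorphism of Proposition~\ref{prop:chains} to an isomorphism.
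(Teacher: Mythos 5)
Your proposal is correct and takes essentially the same approach as the paper: the map you define on generators is exactly the paper's $\Phi=\bigoplus_{g\in \mathcal I_J}\Psi_{g}$ (your condition that $g\in\mathcal I_J$ represents $P_Jx^{-1}$ is the paper's condition $x\in g^{-1}P_J$), and your characterisation of the relative cochain basis via $J(\sigma)\in\Omega^g_J$ is the same combinatorial fact the paper uses. The only difference is presentational: you establish bijectivity by matching canonical bases and verify the coboundary compatibility directly from the $\orb$-module structure, whereas the paper exhibits explicit inverse sections $\Delta_g$ and checks that these commute with the coboundaries; the two verifications amount to the same computation.
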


\begin{proof}  
  We define $\Phi=\bigoplus_{g\in \mathcal I_J} \Psi_{g}$  with each 
  $$\Psi_{g}: C^*_{\mF}(X; \mB_{P_J}) \to C^*(K_{\Omega^g_J}, K_{>\Omega^g_J})$$ 
  constructed analogously to the map $\Psi$ of Proposition \ref{prop:chains} where one replaces an arbitrary simplicial panel complex $Y$ with $K$. Namely, we identify
  \begin{align*}
    C^i_{\mF}(X; \mB_{P_J}) & = \nathom \big(\bigoplus_{\sigma \isc K^{(i)}}  \mA_{G_{\sigma}},\mB_{P_J}\big),\\
                            & \cong \bigoplus_{\sigma \isc K^{(i)}} \mB_{P_J}(G_{\sigma}),\\
                            &\cong \bigoplus_{j=1}^k  \Z[\mbox{isom}_G(G/{G_{\sigma_j}}, G/{P_J})],
                                                                   \end{align*}
  \noindent where $\sigma_j$-s are all the $i$-simplices of $K$ so that $G_{\sigma_j}=_G P_J$.

  Now, fix $g\in \mathcal I_J$ and  suppose $\sigma$ is an $i$-simplex with stabiliser $g^{-1}P_Jg=P^g_J$. Given an (iso)morphism 
  $$(\varphi:G/{G_{\sigma}}\xrightarrow{x} G/{P_J}: G_{\sigma}\mapsto xP_J)\in C^i_{\mF}(X; \mB_{P_J}),$$
   we define 
  \[\Psi_{g}(\varphi) = \left\{ \begin{array}{ll}
        c^{\sigma}      & \text{ if }\;   G_{\sigma}=P^g_J   \text{ and } x\in g^{-1}P_J\\
        0 &  \text{otherwise }   
        \end{array} \right.\]
        
        \noindent where ${c^{\sigma}}\in C^i(K_{\Omega^g_J}, K_{>\Omega^g_J})$ equals to $1$ on $\sigma\isc K_{\Omega^g_J}$ and  vanishes everywhere else. The proof that $\Psi_{g}$ commutes with the co-boundary maps is analogous to the corresponding argument in the proof of Proposition \ref{prop:chains} and hence it is omitted.
    (Alternatively, it also follows from the commutativity of the co-boundary maps with sections $\Delta_{g}$ defined below.)

  To show that $\Phi$ is an isomorphism, we first define a section 
  $$\Delta_{g}: C^*(K_{\Omega^g_J}, K_{>\Omega^g_J})\to C^*_{\mF}(X; \mB_{P_J}): {c^{\sigma}}\mapsto (\varphi_{\sigma}: G/{P^g_J}\xrightarrow{g^{-1}} G/P_J)$$
   to each $\Psi_{g}$. We need to show that it commutes with the co-boundary maps. We have
  \begin{equation}\label{gf}
    \delta_{\mF}(\Delta_{g}(c^{\sigma}))=\delta_{\mF}(\varphi_{\sigma})=\sum_{s=1}^r(-1)^{\text{sgn}(y_s\tau_s)}\varphi_{y_s\tau_s}
  \end{equation}
  where $\tau_s\isc K$ contains $\sigma$ as a face and $\varphi_{y_s\tau_s}:G/{G_{\tau_s}}\xrightarrow{y_s^{-1}} G/{P^g_J}\xrightarrow{g^{-1}} G/P_J$ with $y_sG_{\tau_s}y_s^{-1} \leqslant P^g_J$ and  $y_s\in P^g_J$. Note that if $0\ne \varphi_{y_s\tau_s}\in C^*_{\mF}(X; \mB_{P_J})$, then by definition of $\mB_{P_J}$, the subgroup $G_{\tau_s}$ must be conjugate to $P_J$ and  $G_{\tau_s}=P^g_J$. Therefore, $\tau_s=y_s\tau_s\isc K$ and $\varphi_{y_s\tau_s}=\varphi_{\tau_s}\in \mbox{Im}\Delta_g$. 
         
  On the other hand,
  \begin{equation}\label{gs}
    \Delta_{g}(\delta_J(c^{\sigma}))=\sum_{t=1}^l(-1)^{\text{sgn}(\tau_t)}\Delta_{g}(c^{\tau_t})=\sum_{t=1}^l(-1)^{\text{sgn}(\tau_t)}\varphi_{\tau_t}  \end{equation}
	where $\tau_t\isc K_{\Omega^g_J}$ contains $\sigma$ as a face.  The claim now follows from equating (\ref{gf}) and (\ref{gs}). It is straightforward to check that $\Phi$ and $\Delta=\bigoplus_{g\in \mathcal I_J} \Delta_{g}$ are inverses of each other.
\end{proof}

\begin{remark}\label{rem:otherpanelcomplex}
	It is worth pointing out that Proposition~\ref{prop:ggchains} can be generalised to hold for  arbitrary  simplicial  panel complex $(X, \{X_J\}_{J \in Q})$ where one defines $X_{\Omega}= \bigcup_{J \in \Omega} X_J$ and $X_{>\Omega} = \bigcup_{ \{ U \in \mQ \mid  U\geq J\text{ for some } J \in \Omega \text{ and }  P_U \gneq P_J \} }X_U$, though this is not necessary for our purposes.
\end{remark}

\section{Main theorems}\label{sec:maintheorems}

In this section we state and prove slightly more general versions of Theorems~\ref{thm:introrigidgen} and ~\ref{thm:intromain} from the introduction. The generalisation concerns the computation of Bredon cohomology of the Basic Construction  $D(\D, G(\mathcal{Q}), \psi)$. In the statements below, we allow $\psi \colon G(\mQ) \to G$ to be a simple morphism to an arbitrary group $G$, not necessarily the fundamental group of $G(\mQ)$.

\begin{theorem}\label{thm:textrigidgen}

 Let $G(\mathcal Q)$ be a strictly developable simple complex of groups and let $\psi \colon G(\mQ) \to G$ be a simple morphism that is injective on local groups. Let $\mF$ be the family of subgroups of $G$ generated by local groups. Let $X=D(\D, G(\mathcal{Q}), \psi)$ be the associated Basic Construction. For $J\in \mQ$ we then have
  \begin{equation}\label{eq:textrigidzero}
    H^*_{\mF}(X; \mB_{P_J}) \cong \bigoplus_{g\in \mathcal I_J}\bigoplus_{C_J^g\subseteq \Omega^g_J} H^*(K_{C_J^g}, K_{>C_J^g}),
  \end{equation}
   where $C_J^g\subseteq \Omega^g_J$ denotes a block in $\Omega^g_J$.\medskip

  If $G(\mathcal Q)$ is rigid and $X$ is a model for $\EFG$, then 
  \begin{equation}\label{eq:textrigid}
    \fcd G=\mathrm{max}\{n \in \mathbb{N} \mid H^{n}(K_C, K_{> C}) \neq 0 \text{ for some block } C \subseteq {\mathcal Q}\}.
  \end{equation} 
\end{theorem}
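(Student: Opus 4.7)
The plan is to prove (\ref{eq:textrigidzero}) first and then derive (\ref{eq:textrigid}) by combining it with Corollary~\ref{cor:B_module}. For the first part, Proposition~\ref{prop:ggchains} already supplies an isomorphism of cochain complexes
\[ C^*_{\mF}(X; \mB_{P_J}) \cong \bigoplus_{g \in \mathcal I_J} C^*(K_{\Omega^g_J}, K_{>\Omega^g_J}), \]
so it remains to refine each summand on the right into a further direct sum indexed by blocks $C_J^g \subseteq \Omega^g_J$.

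The key observation is that every element of $\Omega^g_J$ has the common local group $g^{-1}P_J g$, so the block relation restricted to $\Omega^g_J$ is generated purely by comparability. A simplex $\sigma = [V_0 < V_1 < \cdots < V_n]$ of $K$ represents a nonzero class in the relative complex exactly when its minimum vertex $V_0$ lies in $\Omega^g_J$, and then $V_0$ determines a unique block $C_J^g$ containing $\sigma$. I would then verify that the coboundary preserves this block grading: the faces obtained by removing $V_i$ with $i \geqslant 1$ retain $V_0$ as the minimum, while removing $V_0$ produces a face with new minimum $V_1$. In the latter case, either $V_1 \in \Omega^g_J$ (forcing $V_1$ into $C_J^g$ since $V_0 < V_1$ are comparable within $\Omega^g_J$), or $P_{V_1} \gneq g^{-1}P_J g$, in which case the face is absorbed into $K_{>\Omega^g_J}$ and so represents zero. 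This establishes the desired decomposition and yields (\ref{eq:textrigidzero}).

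For (\ref{eq:textrigid}) I would apply Corollary~\ref{cor:B_module}. The space $X$ is cocompact because $\mathcal Q$ is finite, and the cell stabilisers, which are precisely the $G$-conjugates of local groups $P_J$, form a rigid collection: if $aHa^{-1} \lneq H$ for a cell stabiliser $H = hP_J h^{-1}$, conjugating by $h^{-1}$ exhibits a $G$-conjugate of $P_J$ properly contained in $P_J$, contradicting rigidity of $G(\mathcal Q)$. Therefore
\[ \fcd G = \max\{k \mid H^k_{\mF}(X, \mB_P) \neq 0 \text{ for some cell stabiliser } P\}, \]
and because $\mB_P \cong \mB_{P_J}$ as $\mathcal O_{\mF}G$-modules whenever $P$ is $G$-conjugate to $P_J$, it suffices to let $P$ range over $\{P_J : J \in \mathcal Q\}$. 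Substituting (\ref{eq:textrigidzero}) turns this into the maximum of $H^n(K_C, K_{>C})$ over blocks $C \subseteq \Omega^g_J$ as $J \in \mathcal Q$ and $g \in \mathcal I_J$ vary. Every block $C$ in $\mathcal Q$ has a constant local group $P_U$ for $U \in C$, so $C$ appears as a block in $\Omega^1_U$; consequently this maximum ranges over all blocks in $\mathcal Q$, yielding (\ref{eq:textrigid}).

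The main obstacle I expect is the careful combinatorial bookkeeping in the block decomposition of the relative cochain complex, in particular checking that the coboundary never mixes distinct blocks. Once that decomposition is in hand, the remainder is a direct assembly of Proposition~\ref{prop:ggchains} and Corollary~\ref{cor:B_module}, together with the elementary translation between cell stabilisers of $X$ and the local groups $P_J$ via conjugation.
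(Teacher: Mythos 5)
Your argument is correct, and its second half -- deducing \eqref{eq:textrigid} from Corollary~\ref{cor:B_module} via cocompactness, the rigidity of the collection of cell stabilisers, the conjugation-invariance $\mB_P\cong\mB_{P_J}$, and the observation that every block of $\mQ$ occurs inside some $\Omega^1_U$ -- is essentially the same assembly the paper performs. Where you genuinely diverge is the decomposition step behind \eqref{eq:textrigidzero}: the paper does not split anything at the cochain level, but proves $H^n(K_{\Omega^g_J},K_{>\Omega^g_J})\cong\bigoplus_{C_J^g\subseteq\Omega^g_J}H^n(K_{C_J^g},K_{>C_J^g})$ by induction on the number of blocks, peeling off one block $C$ from $R=\Omega^g_J\smallsetminus C$ and using the relative Mayer--Vietoris sequence of $(K_R\cup K_C,\,K_{>R}\cup K_{>C})$ together with the key claim $K_C\cap K_R=K_{>C}\cap K_{>R}$, which kills the connecting terms. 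Your route instead grades the relative cochain complex directly: a relative generator of $(K_{\Omega^g_J},K_{>\Omega^g_J})$ is precisely a simplex whose minimal vertex lies in $\Omega^g_J$, the block of that minimal vertex gives the grading, and the (co)boundary respects it because a new minimal vertex either has the same local group -- and then, being comparable to the old one, is block-equivalent to it -- or has strictly larger local group and is absorbed into the relative subcomplex. This yields a splitting of $C^*_{\mF}(X;\mB_{P_J})$ itself via Proposition~\ref{prop:ggchains}, which is finer than, and immediately implies, the cohomological isomorphism; the paper's Mayer--Vietoris induction is a little quicker to write but only produces the isomorphism after passing to cohomology. The one point your sketch should spell out when written up is the identification of the block-$C$ graded piece with $C^*(K_C,K_{>C})$: a simplex of $K_C$ not lying in $K_{>C}$ automatically has its minimal vertex in $C$ (again because comparable elements with equal local groups lie in one block), and its cofaces counted in the two relative complexes coincide -- both facts follow from the same comparability observation you already invoke, so this is bookkeeping rather than a gap.
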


\begin{proof}
  First we prove formula~\eqref{eq:textrigidzero}. To do this we show that for every $J \in \mathcal Q$, ${g\in \mathcal I_J}$ and for any integer $n\geqslant 0$ we have 
  \begin{equation}\label{eq:dimensioncompare}
    H^n(K_{\Omega^g_J},K_{>\Omega^g_J})\cong \bigoplus_{C_J^g\subseteq \Omega^g_J} H^n(K_{C_J^g}, K_{>C_J^g}).
  \end{equation}
  To show \eqref{eq:dimensioncompare}, we proceed by induction on the number of blocks $C\subseteq \Omega^g_J$. If $\Omega^g_J$ contains only one block then \eqref{eq:dimensioncompare} is clearly satisfied. Assume now that $\Omega^g_J$ contains more than one block. Let $C \subseteq \Omega^g_J$, let $R= \Omega^g_J \smallsetminus C$ and write the pair $(K_{\Omega^g_J}, K_{> \Omega^g_J})$ as
  \[(K_{\Omega^g_J}, K_{> \Omega^g_J})= (K_{R} \cup K_{C}, K_{> R} \cup K_{>C}).\]
  Consider the relative Mayer-Vietoris sequence for the above pair:
  \begin{align*} 
            H^{n-1}(K_{C}  \cap K_{R},K_{> C} \cap K_{> R} )  \to & \\ 
        \to H^n(K_{\Omega^g_J}, K_{> \Omega^g_J})\to H^n  (K_{C}&, K_{> C}) \oplus  H^n(K_{R}, K_{> R})  \to \\
                                                                    & \to H^{n}(K_{C} \cap K_{R},K_{> C} \cap K_{> R} ).  
  \end{align*} 
  \begin{claim}
    We have $K_{C}  \cap K_{R} =K_{> C}  \cap K_{> R}$.
  \end{claim}

  To prove the claim consider an element $V \in K_{C}  \cap K_{R}$ (i.e., we view $V\in \mathcal Q$ as a vertex of $K$). Thus $U \leqslant V$ and $U' \leqslant V$ for some $U\in C$ and $U' \in R$. If $V\notin K_{> C}  \cap K_{> R}$, then $P_V=P_U$ or $P_V=P_{U'}$. In either case we get $P_V=g^{-1}P_Jg$ which implies that $V\in C$ and $V\in R$. This is a contradiction and the claim follows.

  The claim implies that $H^{n}(K_{C}  \cap K_{R},K_{> C} \cap K_{> R} )=0$ for every $n\geqslant 0$ and therefore the map
  \[H^n(K_{\Omega^g_J}, K_{> \Omega^g_J}) \to H^n(K_{C}, K_{> C}) \oplus  H^n(K_{R}, K_{> R})\] 
  is an isomorphism. Since by the inductive assumption we have 
  $$H^n(K_{R},K_{>R})\cong \bigoplus_{C' \subseteq R}H^n(K_{C'},K_{>C'}),$$
  the formula \eqref{eq:dimensioncompare} is established.

  Formula~\eqref{eq:textrigidzero} follows now easily from Proposition~\ref{prop:ggchains} and formula~\eqref{eq:dimensioncompare}.\bigskip

  We now prove formula~\eqref{eq:textrigid}. Note that here by the assumption $X$ is a cocompact model for $\EFG$ and thus $G$ is isomorphic to the fundamental group of $G(\mQ)$ (see Theorem~\ref{thm:scogisfund}). By Corollary \ref{cor:B_module}, we have
  $$\fcd G=\mathrm{max}\{n \in \mathbb{N} \mid H^{n}_{\mF}(X, \mB_{P_J}) \neq 0  \text{ for some } J \in \mathcal Q \}.$$
  By Proposition \ref{prop:ggchains}, we have
  \begin{align*}
      &\mathrm{max}\{n \in \mathbb{N} \mid H^{n}_{\mF}(X, \mB_{P_J}) \neq 0  \text{ for some } J \in \mathcal Q \}\\
    = &\mathrm{max}\{n \in \mathbb{N} \mid H^{n}(K_{\Omega^g_J}, K_{> \Omega^g_J}) \neq 0  \text{ for some } J \in \mathcal Q, g\in \mathcal I_J \}\\
    = &\mathrm{max}\{n \in \mathbb{N} \mid H^{n}(K_{\Omega^1_U}, K_{> \Omega^1_U}) \neq 0  \text{ for some } U \in \mathcal Q \}\\
    = &\mathrm{max}\{n \in \mathbb{N} \mid H^{n}(K_{C}, K_{> C}) \neq 0  \text{ for some block } C \subseteq \mathcal Q \}\qedhere
  \end{align*}
\end{proof}

\begin{proof}[Proof of Theorem \ref{thm:intromain}]
  We first prove part~\ref{it:HoeqPlusFormulaCohomology}. By Proposition~\ref{prop:genhomotop}(\ref{it:genhomotopspaces}) complexes $D(K, G(\mQ))$ and $D(\B, G(\mQ))$ are $G$-homotopy equivalent. 

  The formula for cohomology of $D(K, G(\mQ))$ follows from formula~\eqref{eq:textrigidzero} of Theorem~\ref{thm:textrigidgen} in the following way (note that in formula~\eqref{eq:textrigidzero} one does not assume rigidity). Since by assumption complex $G(\mQ)$ is thin, we have that blocks are equal to elements of $\mQ$. Moreover, for a single element $U \in \mQ$ we have that $K_U$ is contractible, and thus we obtain \[H^*(K_{U}, K_{>U}) \cong \rH^{*-1}(K_{>U}).\]

  Now we prove part~\ref{it:mainintroCohDim}. Since $D(K, G(\mQ))$ is a model for $\EFG$, it is in particular simply connected, and thus by Theorem~\ref{thm:scogisfund} we get that $G$ is isomorphic to the fundamental group of $G(\mQ)$.  Since $D(K, G(\mQ))$ and $D(\B, G(\mQ))$ are $G$-homotopy equivalent, we conclude that $D(\B, G(\mQ))$  is a model for $\EFG$ as well. Clearly $D(\B, G(\mQ))$ is cocompact. 

  The formula for the dimension of  $D(\B, G(\mQ))$ and formula \eqref{eq:introdim} for $\fcd G$ follow now easily from combining Proposition~\ref{prop:gendimbest} and Proposition~\ref{prop:partial}. 
\end{proof}

\begin{remark}We remark that Theorem~\ref{thm:textrigidgen} holds true if we replace the complex $K$ by any other panel complex over $\mQ$ whose all panels are contractible (cf.\ Remark~\ref{rem:otherpanelcomplex}). In particular, one can use the Bestvina complex $B$. Unlike in Theorem~\ref{thm:intromain}, here the dimension of the  resulting Basic Construction $D(\B, G(\mQ), \psi)$ may not be optimal, nonetheless, since Bestvina complex in general has a smaller cell structure than the complex $K$, its use may simplify cohomological computations.	
\end{remark}

\section{Deformation retractions and actions on trees}\label{sec:defretraction}

  In this section we show that when Bestvina complex for $G(\mathcal Q)$ is a tree then it  can be realised as an equivariant deformation retract of the standard development. This can be seen as a generalisation of results of Davis \cite[Proposition 8.5.5]{Davbook} and the authors' \cite{PePry} to the case of infinite local groups. The key ingredient in the proof is the cohomological formula of Theorem~\ref{thm:intromain}. We remark that our approach relies neither on Dunwoody's accessibility theory \cite{Dun} nor on Dicks-Dunwoody's almost stability theorem \cite[III.8.5]{DicDun}.

  \begin{theorem}\label{thm:deformationretraction}
    Let $G(\mathcal Q)$ be a strictly developable thin simple complex of groups over a poset $\mathcal{Q}$ with fundamental group $G$ and let $\mF$ be the family generated by local groups. Suppose that $D(\D, G(\mathcal{Q}))$ is a model for $\EFG$. Then $ \fcd G\leqslant 1$ if and only if $D(B, G(\mathcal{Q}))$ is a tree and an equivariant deformation retract of  $D(K, G(\mathcal{Q}))$.
  \end{theorem}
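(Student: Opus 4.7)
The forward direction is immediate: if the deformation retraction exists, then $D(B, G(\mathcal{Q}))$ is a $G$-deformation retract of the model $D(K, G(\mathcal{Q}))$ for $\EFG$, and hence is itself a model for $\EFG$ (as a $G$-CW-complex with stabilisers in $\mF$ that is $G$-homotopy equivalent to a model). Since it is a tree of dimension at most one, we conclude $\fcd G \leqslant \fgd G \leqslant 1$.

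For the converse, assume $\fcd G \leqslant 1$. Since $\fcd G \neq 2$, Theorem~\ref{thm:textmain}\ref{it:maintextCohDim} yields that $D(B, G(\mathcal{Q}))$ is a cocompact model for $\EFG$ of dimension at most one, hence a tree. The plan is first to realize $B$ as a subcomplex of $K$ with $B_J \subseteq K_J$ for every $J \in \mathcal{Q}$ (possibly after subdividing $K$), and then to build a panel-preserving equivariant deformation retraction. For the realisation step I would proceed by downward induction on $\mathcal{Q}$: at maximal $J$, take $B_J$ to be the single point $K_J$; then, given $B_{J'} \subseteq K_{J'}$ for all $J' > J$, observe that $B_{>J} \subseteq K_{>J} \subseteq K_J$, and since $K_J$ is a contractible polyhedron containing $B_{>J}$, one can choose $B_J$ to be a compact contractible subpolyhedron of $K_J$ of the smallest possible dimension containing $B_{>J}$.

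With $B \subseteq K$ in place, the next step is to construct a retraction $r \colon K \to B$ and a homotopy $H \colon K \times I \to K$ from $\mathrm{id}_K$ to $r$ fixing $B$ pointwise, satisfying the panel-preservation properties $r(K_J) \subseteq B_J$ and $H(K_J \times I) \subseteq K_J$ for every $J \in \mathcal{Q}$. Such panel-preserving data descends $G$-equivariantly to the Basic Constructions, yielding the required deformation retraction of $D(K, G(\mathcal{Q}))$ onto $D(B, G(\mathcal{Q}))$. Both $r$ and $H$ are built by downward induction on $\mathcal{Q}$: for maximal $J$ there is nothing to do; for general $J$, assuming $r$ and $H$ have been constructed in a panel-preserving way on $K_{>J} = \bigcup_{J' > J} K_{J'}$, the map $r|_{K_{>J}}$ takes values in $B_{>J} \subseteq B_J$, and since $B_J$ is contractible and the inclusion $K_{>J} \hookrightarrow K_J$ is a CW cofibration, cellular obstruction theory yields an extension $K_J \to B_J$. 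Similarly, the partial homotopy on $(K_J \times \{0,1\}) \cup (K_{>J} \times I) \subseteq K_J \times I$ takes values in the contractible $K_J$ and extends to a map $K_J \times I \to K_J$ by the same argument.

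The main anticipated obstacle is the geometric realisation step of the converse: arranging $B$ as a literal subcomplex of $K$ with the panel-by-panel containment $B_J \subseteq K_J$ consistent with the recursive Bestvina construction, which in the thin one-dimensional setting amounts to choosing arcs inside each contractible $K_J$ linking the (at most finitely many) components of the already-built $B_{>J}$. Once this is accomplished, the inductive extensions of $r$ and $H$ are routine obstruction-theoretic arguments that rely only on the contractibility of the panels $B_J$ and $K_J$.
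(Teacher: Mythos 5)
Your overall architecture is the same as the paper's: the paper proves this theorem by citing the corresponding argument of \cite{PePry} (Theorem~4.8 there), which likewise realises the Bestvina panels inside $K$ and builds a panel-preserving deformation retraction that descends to the Basic Constructions; the only new ingredient the paper supplies is that the cohomological formula of Theorem~\ref{thm:textmain} now holds for arbitrary local groups. Your forward direction is fine, and so are the descent and extension steps (panel-preserving maps do induce $G$-maps of Basic Constructions, and extending $r$ and $H$ over $K_{>J}\hookrightarrow K_J$ into contractible targets is routine).

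The genuine gap is at exactly the step you flag as the main obstacle, and it is the place where the hypothesis $\fcd G\leqslant 1$ has to do real work; in your write-up it is used only to conclude that the abstract $D(\B, G(\mathcal{Q}))$ is a tree and never again. Your inductive step asserts that building $B_J\subseteq K_J$ ``amounts to choosing arcs linking the components of the already-built $B_{>J}$''. That presupposes that $B_{>J}$ is a forest: if a component of $B_{>J}$ contained a loop, no one-dimensional contractible $B_J$ containing it exists, a ``smallest possible dimension'' contractible subpolyhedron of $K_J$ containing $B_{>J}$ need not exist at all (this embedded realisation problem is precisely the open ``subconical'' issue discussed in Section~\ref{sec:questions}), and in any case the output would not be the tree $D(\B, G(\mathcal{Q}))$ of the statement. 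What rules this out is the crucial geometric consequence of the hypothesis, which the paper's proof isolates explicitly: by formula~\eqref{eq:textdim}, $\fcd G\leqslant 1$ forces $\rH^{n}(K_{>J})=0$ for all $n\geqslant 1$ and all $J$, so each $K_{>J}$ is a disjoint union of acyclic pieces, and one must then argue (comparing the panel structures of the embedded $B_{>J}$ and of $K_{>J}$, all panels being contractible) that $H^1(B_{>J})=0$ as well, i.e.\ that $B_{>J}$ is a forest, so that adding cone segments to the vertex corresponding to $J$ produces a tree $B_J$. A secondary point you should also make explicit: the new arcs must be chosen with interiors in $K_J\smallsetminus K_{>J}$ (e.g.\ segments to the cone point); otherwise points of $B$ can acquire larger stabilisers than prescribed, panels can intersect in unintended ways, and the $G$-subcomplex $G\cdot B\subseteq D(\D, G(\mathcal{Q}))$ need not be $G$-homeomorphic to $D(\B, G(\mathcal{Q}))$, even though your panel-preservation conditions do guarantee that the maps descend.
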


  \begin{proof}
    The proof is a verbatim translation of the proof of Theorem~4.8 of \cite{PePry} which treats the case of finite local groups. The only place where that proof uses the fact that local groups are finite is the use of \cite[Proposition~3.6]{PePry}, which gives a formula for the cohomological dimension of $G$ for the family of finite subgroups. In Theorem~\ref{thm:intromain} we prove that the same formula holds for a family $\mF$ generated by arbitrary local groups:
    $$\fcd G=\mathrm{max}\{n \in \mathbb{N} \mid \rH^{n-1}(K_{> J}) \neq 0 \text{ for some } J \in \mathcal Q \}.$$
    Note that $\fcd G \leqslant 1$ implies that for any $J \in \mathcal Q$ we have $\rH^{n}(K_{> J}) = 0$ for all $n>0$, and thus any $K_{> J}$ is a disjoint union of contractible spaces. This is the crucial piece of geometric information which is used in Theorem~4.8 of \cite{PePry} to build Bestvina complex as an equivariant deformation retract of the standard development.
  \end{proof}

  In some cases the condition ensuring that $\fcd G\leqslant 1$ can be read from the global structure of the poset $\mathcal Q$.

  \begin{example}\label{ex:chordal}
    Suppose $Q$ is a poset of simplices of a finite flag simplicial complex $L$. Then $\dB{Q} \leqslant 1$ if and only if the one skeleton $L^{(1)}$ of $L$ is a \emph{chordal graph}, i.e., for any cycle in $L^{(1)}$ of length at least four there is an edge connecting two non-consecutive vertices of the cycle (a \emph{chord}).
  \end{example}

\section{Applications and examples}\label{sec:applications}

\subsection{Bredon cohomological dimension for finite subgroups}

\begin{proposition}\label{prop:bredondimforfiniteandtorsionfree}
  Let $G(\mathcal Q)$ be a strictly developable simple complex of groups $G(\mathcal Q)$ with collection of local groups $\{P_J\}_{J \in \mathcal Q}$ and  fundamental group $G$. Let $\mR$ be the associated block poset. Suppose $D(\D, G(\mathcal{Q}))$ is a model for $\EFG$ where $\mathcal F$ is the family generated by local groups and assume that $\mathcal F$ contains all finite subgroups of $G$. Then 
  \[\ucd G \leqslant \dB{R}+ \mathrm{max}  \{  \ucd P_J \mid J \in \mathcal Q  \}.\]
  In particular, if $G(\mathcal Q)$ is thin then 
  \[\ucd G \leqslant \dB{Q}+ \mathrm{max}  \{  \ucd P_J \mid J \in \mathcal Q  \}.\]

  If $G$ is virtually torsion-free then both inequalities remain true if one replaces `$\ucd\! $' by `$\vcd\! $'.
\end{proposition}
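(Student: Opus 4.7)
The plan is to combine the Bredon-cohomological dimension bound $\fcd G \leqslant \dB{R}$ supplied by Lemma~\ref{lem:homot} with a standard change-of-family double-complex construction. The proof of Lemma~\ref{lem:homot} produces, via the Bredon chains of $D(B^{\mathbb Z}, G(\mathcal R))$, a finitely generated free $\mathcal O_{\mF}G$-resolution $P_\bullet \twoheadrightarrow \underline{\mathbb Z}$ of length at most $\dB{R}$ in which each $P_i$ is a direct sum of modules $\mathcal A_H = \mathbb Z[\mathrm{hom}_G(-,G/H)]$ whose basepoints $H$ are conjugate to local groups $P_J$.

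To obtain the bound on $\ucd G$, I would restrict $P_\bullet$ along the inclusion of orbit categories $\mathcal O_{\mathcal{F}in}G \hookrightarrow \mathcal O_{\mF}G$, which is meaningful because $\mathcal{F}in \subseteq \mF$ by hypothesis; this yields an exact sequence of $\mathcal O_{\mathcal{F}in}G$-modules augmented to $\underline{\mathbb Z}$. A Shapiro-type isomorphism identifies $\mathrm{Ext}^*_{\mathcal O_{\mathcal{F}in}G}(\mathcal A_H|_{\mathcal{F}in}, M)$ with the Bredon cohomology $H^*_{\mathrm{Fin}(H)}(H; M|_H)$, so the projective dimension of each restricted summand $\mathcal A_H|_{\mathcal{F}in}$ over $\mathcal O_{\mathcal{F}in}G$ equals $\ucd H$. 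Subgroup monotonicity of $\ucd$ (a free $\mathcal O_{\mathcal{F}in}K$-resolution of $\underline{\mathbb Z}$ restricts to a projective $\mathcal O_{\mathcal{F}in}H$-resolution of $\underline{\mathbb Z}$ whenever $H \leqslant K$) combined with the fact that $\mF$ is generated by $\{P_J\}_{J \in \mQ}$ then gives $\ucd H \leqslant \max_J \ucd P_J$ for every $H \in \mF$. Replacing each summand of $P_\bullet|_{\mathcal{F}in}$ with a projective $\mathcal O_{\mathcal{F}in}G$-resolution of that length and totalising the resulting first-quadrant double complex produces a projective $\mathcal O_{\mathcal{F}in}G$-resolution of $\underline{\mathbb Z}$ of length at most $\dB{R} + \max_J \ucd P_J$, yielding the claimed bound. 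When $G(\mQ)$ is already thin the block poset $\mathcal R$ coincides with $\mQ$, giving the same inequality with $\dB{Q}$.

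For the virtually torsion-free case I would pick a torsion-free subgroup $G' \leqslant G$ of finite index. Since $\{e\} \in \mF$, evaluating $P_\bullet$ at $G/\{e\}$ and restricting scalars from $\mathbb Z G$ to $\mathbb Z G'$ yields an exact sequence of $\mathbb Z G'$-modules resolving $\mathbb Z$, whose summands decompose as $\bigoplus \mathbb Z[G'/(gHg^{-1} \cap G')]$ along $(G',H)$-double cosets. Each intersection $gHg^{-1} \cap G'$ is torsion-free and of finite index in a conjugate of some $P_J$ (which is virtually torsion-free as a subgroup of $G$), so $\mathrm{cd}(gHg^{-1} \cap G') \leqslant \vcd P_J$. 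The parallel double-complex construction over $\mathbb Z G'$ then produces a projective $\mathbb Z G'$-resolution of $\mathbb Z$ of length at most $\dB{R} + \max_J \vcd P_J$, giving $\vcd G = \mathrm{cd}(G')$ the desired bound. The main technical points—the Shapiro-type identification of projective dimensions of $\mathcal A_H|_{\mathcal{F}in}$ and $\mathbb Z[G/H]$, together with the verification that the totalised double complexes really are projective resolutions of the correct length—are standard but need careful bookkeeping; once in place, the length inequalities follow by inspection of the double-complex filtration.
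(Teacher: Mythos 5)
Your argument is correct in substance, but at the decisive step it re-proves rather than reuses the result the paper relies on. The paper's proof is a three-line citation argument: by \cite[Corollary~4.2]{DPT}, $\ucd G \leqslant \fcd G + \max\{\ucd F \mid F \in \mF\}$ whenever $\mF$ contains all finite subgroups; since every $F \in \mF$ is subconjugate to some $P_J$, the maximum equals $\max_J \ucd P_J$; and $\fcd G \leqslant \dB{R}$ is exactly Lemma~\ref{lem:homot}. For the virtually torsion-free statement the paper passes to a torsion-free finite-index subgroup and repeats the argument with ordinary cohomological dimension. Your double-complex construction (restrict a short free resolution over $\mathcal O_{\mF}G$ to the orbit category for the family of finite subgroups, resolve each restricted summand via the Shapiro-type identification of its projective dimension with $\ucd H$, totalise) is essentially a proof of the cited inequality in this special case, so the two routes agree in mathematical content; yours is self-contained, the paper's is shorter and delegates the bookkeeping to \cite{DPT}.

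Two details in your write-up need repair, though both are fixable by standard arguments. First, the proof of Lemma~\ref{lem:homot} does not literally hand you a free $\mathcal O_{\mF}G$-resolution of $\underline{\Z}$ of length $\dB{R}$ based at conjugates of local groups: it only exhibits $C^{\mF}_{\ast}(D(\D, G(\mathcal Q)))$ as a chain-homotopy retract of a complex of free modules concentrated in degrees at most $\dB{R}$, and concludes $\fcd G \leqslant \dB{R}$; the paper never asserts that $D(B^{\Z}, G(\mR))$ has acyclic fixed-point sets. To obtain the input your double complex needs, start from the finite free resolution $C^{\mF}_{\ast}(D(\D, G(\mathcal Q)))$, whose basepoints are conjugates of the $P_J$, and truncate at degree $\dB{R}$: the syzygy is projective because $\fcd G$ coincides with the projective dimension of $\underline{\Z}$, and by the Schanuel argument used in the proof of Theorem~\ref{thm:stab} it is a direct summand of a finitely generated free module based at cell stabilisers; passing to summands does not affect any of the projective-dimension bounds you use. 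Second, in the torsion-free part, $gHg^{-1}\cap G'$ has finite index in $gHg^{-1}$, which is merely subconjugate to some $P_J$, not necessarily of finite index in a conjugate of it; either work with the truncated resolution above, where the ambient basepoints are genuine conjugates of the $P_J$ and your finite-index claim becomes literal, or invoke Serre's theorem to conclude that any torsion-free subgroup of the virtually torsion-free group $P_J$ has cohomological dimension at most $\vcd P_J$. With these adjustments your proof is complete.
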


\begin{proof}
  For any discrete group $G$ and for any family of subgroups $\mF$ which contains all finite subgroups of $G$ we have $\ucd G \leqslant \fcd G +  \mathrm{max}  \{  \ucd F \mid F \in \mathcal \mF  \} $ \cite[Corollary~4.2]{DPT}. Since every subgroup in $\mF$ is subconjugate to a subgroup in $\{P_J\}_{J \in \mathcal Q}$, we get that $\mathrm{max}  \{  \ucd F \mid F \in \mathcal \mF  \} =\mathrm{max}  \{  \ucd P_J \mid P_J \in \mathcal \mQ  \} $. Both claims now follow from Lemma~\ref{lem:homot}.

  For the virtually torsion-free case, one first replaces $G$ with a torsion-free finite-index subgroup $G'$ and then one performs the same argument as above applied to ordinary cohomological dimension instead of the proper cohomological dimension. 
\end{proof}

\subsection{Cohomology of buildings and their automorphisms}\label{subsec:autbuildings}

 Groups acting chamber transitively on buildings  form a large class of examples of actions  on nonpositively curved  complexes with a strict fundamental domain.\medskip

We recall some terminology. Let $(W, S)$ be a Coxeter system with the set $S$ finite. A subset $J\subseteq S$ is called \emph{spherical} if the elements of $S$ generate a finite subgroup of $W$ (we assume that the empty set $\emptyset \subseteq S$ generates the trivial subgroup and thus it is spherical). Let $\mQ$ be the poset of spherical subsets of $S$ ordered by inclusion.

Now suppose that $\Delta$ is a building of type $(W,S)$ and that a group $G$ acts chamber transitively on $\Delta$ (see \cite[Section~I.3]{Davbuild}). Such an action gives rise to a strictly developable simple complex of groups $G(\mQ)$ with fundamental group isomorphic to $G$. The standard \emph{geometric realisation} of $\Delta$ is by definition the Basic Construction $D(\D, G(\mathcal{Q}))$ (by replacing $K$ with another panel complex over $\mQ$ one obtains a variety of geometric realisations of $\Delta$). By \cite[Theorem~11.1]{Davbuild} there is a complete $\mathrm{CAT}(0)$ metric on $D(\D, G(\mathcal{Q}))$ such that $G$ acts by isometries. Thus $D(\D, G(\mathcal{Q}))$ is a cocompact model for $\EFG$, where $\mF$ is the family generated by the local groups. Let $P_J$ denote the local group (i.e.~ the special parabolic subgroup) at element $J \in Q$. 

\begin{lemma}\label{lem:buildingmin} 
  In the above setting, if $J \leqslant T$ then $P_{J} \leqslant P_T$ is a proper inclusion. In particular, the complex of groups $G(\mathcal Q)$ is thin. 
\end{lemma}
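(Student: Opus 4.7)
The plan is to produce, for each $J \subsetneq T$ in $\mathcal Q$, an explicit element of $P_T \setminus P_J$. The key input is that under the chamber-transitive construction of $G(\mathcal Q)$ from the building \cite[Section~I.3]{Davbuild}, the local group $P_J$ is the set-wise (hence, by admissibility of the Basic Construction, point-wise) $G$-stabiliser of the $J$-residue of the fundamental chamber $c_0$, i.e.\ of the set $\{c \in \Delta \mid \delta(c_0, c) \in W_J\}$, where $\delta$ is the Weyl distance of $\Delta$. The inclusion $P_J \leqslant P_T$ for $J \leqslant T$ is then forced by the inclusion of residues coming from $W_J \leqslant W_T$, so it remains to show strictness.

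With that in hand I would pick $s \in T \setminus J$ and any chamber $c_s \in \Delta$ with $\delta(c_0, c_s) = s$; such a chamber exists in any apartment of $\Delta$ through $c_0$ by the building axioms. By chamber transitivity choose $g \in G$ with $g \cdot c_0 = c_s$. Since the $G$-action preserves Weyl distances, $g$ maps $J$-residues to $J$-residues and $T$-residues to $T$-residues. Because $s \in W_T$, the chambers $c_0$ and $c_s$ lie in a common $T$-residue, so $g$ stabilises the $T$-residue of $c_0$, giving $g \in P_T$. On the other hand $s \notin W_J$, so the $J$-residues of $c_0$ and $c_s$ are distinct, and hence $g$ does not even set-wise stabilise the $J$-residue of $c_0$; therefore $g \notin P_J$. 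This exhibits the desired element of $P_T \setminus P_J$, proving the strict inclusion; the ``in particular'' clause is then immediate from the definition of thinness.

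I do not anticipate a genuine obstacle. The most delicate point is the identification of $P_J$ with the stabiliser of the $J$-residue of $c_0$, but this is part of the standard dictionary between chamber-transitive actions on buildings and simple complexes of groups over the poset of spherical subsets, obtained by comparing Davis's construction \cite[Theorem~11.1]{Davbuild} with the Basic Construction from Section~\ref{sec:sdabc}. Note also that the argument uses only chamber-transitivity and the existence of an $s$-adjacent chamber; in particular it makes no thickness assumption on $\Delta$.
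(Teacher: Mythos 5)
Your proof is correct and takes essentially the same route as the paper: the paper's own proof of Lemma~\ref{lem:buildingmin} is simply a citation of \cite[Lemma~5.1]{LePe} (noting only that properness of the action is not used there), and the argument behind that citation is precisely your residue-stabiliser argument---chamber transitivity yields $g\in G$ with $\delta(c_0,g c_0)=s$ for some $s\in T\setminus J$, so $g$ preserves the $T$-residue of $c_0$ but moves its $J$-residue, whence $g\in P_T\setminus P_J$. The only quibble is the parenthetical ``set-wise hence point-wise by admissibility'', which applies to cells of the geometric realisation rather than to residues viewed as sets of chambers, but it plays no role in your argument.
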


\begin{proof}
  The proof is verbatim the same as the proof of \cite[Lemma~5.1]{LePe}, since the assumption that $G$ acts properly on $\Delta$ was not used there.
\end{proof}

We remark that in the case where $G=W$, the standard geometric realisation $D(\D, G(\mathcal{Q}))$ of $\Delta$ is by definition the Davis complex of the system $(W,S)$ and it is denoted by $\Sigma_W$.\smallskip

We are now ready to prove the main result of this section which is Corollary \ref{cor:introbuilding}

\begin{proof}[Proof of Corollary \ref{cor:introbuilding}]
  By definition $D(\B, G(\mathcal{Q}))$ is a realisation of $\Delta$. Since by Lemma~\ref{lem:buildingmin} complex  $G(\mQ)$ is thin, Proposition \ref{prop:genhomotop}(\ref{it:genhomotopspaces}) implies that $D(\B, G(\mathcal{Q}))$ and $D(\D, G(\mathcal{Q}))$ are $G$-homotopy equivalent. Thus  $D(\B, G(\mathcal{Q}))$  is a model for $\EFG$, since  $D(\D, G(\mathcal{Q}))$ is a model. Since $D(\B, G(\mathcal{Q}))$ is clearly cocompact, this establishes the first claim of the theorem.
  
  The remaining claims follow directly from Theorem~\ref{thm:intromain} as the formula  for $\vcd W$ (see \cite[Theorem~2]{Drani} or \cite[Theorem~5.4]{DMP}) is identical to formula \eqref{eq:introdim} for $\fcd G$.
\end{proof}

\begin{remark}  
  We note that $D(\B, G(\mathcal{Q}))$ can also be constructed by first constructing $D(\B, W(\mathcal{Q}))$ for the corresponding Coxeter group $W$ and then realising the building with apartments modelled on $D(\B, W(\mathcal{Q}))$. 
\end{remark}

We obtain the following corollary, first proven in \cite[Theorem 4.1.(ii)]{Harland}.

\begin{corollary}\label{cor:textbuilding_3} 
  Let $G$ be a virtually torsion-free group acting chamber transitively on a building of type $(W,S)$. Then $$\vcd G\leqslant \vcd W + \max \{\vcd P \;|\; P \mbox{ is a special parabolic subgroup of  } G\}.$$
\end{corollary}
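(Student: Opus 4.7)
The plan is to derive this as a direct consequence of Corollary~\ref{cor:textbuilding} combined with Proposition~\ref{prop:bredondimforfiniteandtorsionfree}. Recall that the latter asserts, in its virtually torsion-free form, that $\vcd G \leqslant \dB{Q} + \max\{\vcd P_J \mid J \in \mQ\}$ whenever the complex of groups $G(\mQ)$ is thin, $D(K, G(\mQ))$ is a model for $\EFG$, and the family $\mF$ generated by the local groups contains all finite subgroups of $G$.

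First, I would verify the hypotheses of the proposition in the building setting. Thinness of $G(\mQ)$ is exactly the content of Lemma~\ref{lem:buildingmin}. That $D(K, G(\mQ))$ is a (cocompact) model for $\EFG$ is part of the discussion preceding Lemma~\ref{lem:buildingmin}, obtained from the complete $\mathrm{CAT}(0)$ metric of \cite[Theorem~11.1]{Davbuild}. To see that $\mF$ contains every finite subgroup of $G$, note that by the Bruhat--Tits fixed point theorem any finite subgroup $H \leqslant G$ acting by isometries on the $\mathrm{CAT}(0)$ complex $D(K, G(\mQ))$ stabilises a point, and hence lies in the stabiliser of some cell. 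By construction of the Basic Construction, every cell stabiliser is a conjugate of some local group $P_J$, so $H \in \mF$.

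It then remains to assemble the inequality. By Corollary~\ref{cor:textbuilding}, $\dB{Q} = \fcd G = \vcd W$, and by definition the local groups $P_J$ are precisely the special parabolic subgroups of $G$. Substituting these identifications into the inequality supplied by Proposition~\ref{prop:bredondimforfiniteandtorsionfree} yields exactly
\[\vcd G \leqslant \vcd W + \max\{\vcd P \mid P \text{ is a special parabolic subgroup of } G\},\]
which is the claim.

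The argument is essentially a bookkeeping exercise combining two results already established in the paper; the only point that genuinely requires attention is confirming that the family $\mF$ contains all finite subgroups, which follows immediately from the $\mathrm{CAT}(0)$ geometry of the standard realisation.
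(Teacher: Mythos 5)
Your proposal is correct and takes essentially the same route as the paper: combining Corollary~\ref{cor:textbuilding} with Proposition~\ref{prop:bredondimforfiniteandtorsionfree}, using $\dB{Q}=\vcd W$ and the fact that the local groups of $G(\mathcal Q)$ are exactly the special parabolic subgroups of $G$. Your explicit Bruhat--Tits argument that $\mF$ contains every finite subgroup of $G$ merely makes precise a hypothesis the paper leaves implicit, and is correct since point stabilisers of the standard realisation are conjugates of local groups.
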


\begin{proof}
  The corollary follows easily from combining Corollary~\ref{cor:introbuilding} with Proposition~\ref{prop:bredondimforfiniteandtorsionfree}, and the facts that $\dB{Q}=\vcd W$ and that local groups of $G(\mathcal Q)$ are precisely the special parabolic subgroups of $G$.
\end{proof}

\subsection{Graph products of groups} An example of a group acting chamber transitively on a building is a \emph{graph product} of groups, such as for example the right-angled Artin group or the right-angled Coxeter group. 

\begin{definition}
  Consider a finite flag simplicial complex $L$ on the vertex set $S$ with groups $P_s$ for every $s \in S$. The \emph{graph product} $G_L$ is defined as the quotient of the free product of groups $P_s$ for $s \in S$ by the relations 
  \begin{equation*}
    \text{$\{ [P_s,P_t]$ if $[s,t]$ is an edge of $L \}$}.
  \end{equation*}
  In other words, elements of subgroups $P_s$ and $P_t$ commute if and only if there is an edge $[s,t]$ in $L$. 
\end{definition}

If we set $P_s \cong \mathbb{Z}/2$ for every $s \in S$, the corresponding graph product is called the \emph{right-angled Coxeter group} and it is denoted by $W_L$.

If we set $P_s \cong \mathbb{Z}$ for every $s \in S$, the corresponding graph product is called the \emph{right-angled Artin group} and it is denoted by $A_L$.

\begin{theorem}\cite[Theorem~5.1]{Davbuild}\label{thm:davgraphroduct}
  The group $G_L$ acts chamber transitively on a building  of type $(W_L,S)$, where $W_L$ is the right-angled Coxeter group corresponding to $L$. 
\end{theorem}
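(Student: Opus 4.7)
The plan is to realise the building explicitly as a Basic Construction for an appropriate simple complex of groups, and then verify the building axioms using the normal form theorem for graph products (due to Green).

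First, I would form the simple complex of groups $G_L(\mathcal{Q})$ over the poset $\mathcal{Q}$ of spherical subsets of $S$ (i.e., the simplices of $L$ together with the empty subset). The local group at $J \in \mathcal{Q}$ is the special parabolic subgroup $P_J = \prod_{s \in J} P_s$, which is a direct product because by definition of graph product the generators of $P_s$ and $P_t$ commute whenever $[s,t]$ is an edge of $L$ (which is automatic for $s,t \in J$ since $J$ spans a simplex of $L$). The structure maps are the obvious inclusions. Using the normal form theorem for graph products, one verifies that $G_L(\mathcal{Q})$ is strictly developable with fundamental group $G_L$, and that $P_J \cap P_T = P_{J \cap T}$ inside $G_L$.

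Next, form the Basic Construction $X = D(K, G_L(\mathcal{Q}))$ where $K = |\mathcal{Q}|$. Declare the chambers of $X$ to be the translates $gK$ for $g \in G_L$, and declare $gK$ and $g'K$ to be $s$-adjacent whenever $g^{-1}g' \in P_{\{s\}}$; this gives $X$ the structure of a chamber system of type $(W_L, S)$, with $G_L$ acting chamber transitively by left multiplication. To promote this chamber system to a building, I would define a $W_L$-valued distance function $\delta$ on chambers by $\delta(gK, g'K) = \overline{g^{-1}g'}$, where $\overline{\phantom{x}} \colon G_L \to W_L$ sends any element written in graph-product normal form $p_{s_1} p_{s_2} \cdots p_{s_n}$ (with $p_{s_i} \in P_{s_i} \setminus \{1\}$ and $s_1 s_2 \cdots s_n$ reduced) to $s_1 s_2 \cdots s_n \in W_L$. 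This map is well-defined because the only ambiguity in the graph-product normal form is permuting adjacent letters whose indices are joined by an edge of $L$, which is precisely the commutation relation defining $W_L$.

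The main obstacle is verifying the three building axioms for $\delta$. Each axiom reduces, via the normal form, to a statement in the right-angled Coxeter group $W_L$: non-degeneracy follows from the graph-product normal form being a genuine normal form; the Coxeter-replacement axiom (that $s$-adjacency changes $\delta$ by either left-multiplication by $s$ or stabilises it depending on whether $\ell_{W_L}(\delta s)$ increases) follows from the fact that right-multiplying a normal-form word by an element of $P_s$ either appends $s$ to a reduced word or cancels it; and the existence axiom follows because any reduced word $s_1 \cdots s_n \in W_L$ lifts to an element of $G_L$ by picking any nontrivial $p_{s_i} \in P_{s_i}$. Once these axioms are verified, $X$ is a building of type $(W_L, S)$ on which $G_L$ acts chamber transitively, completing the argument.
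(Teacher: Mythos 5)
Note first that the paper gives no proof of this statement at all: it is quoted verbatim from Davis \cite[Theorem~5.1]{Davbuild}, and your proposal is essentially a reconstruction of Davis's original argument — use the normal form theorem for graph products to define a $W_L$-valued distance $\delta(gK,g'K)=\overline{g^{-1}g'}$ on the chambers of the Basic Construction over the poset of spherical subsets, verify the building axioms syllable by syllable, and observe that chamber transitivity of the left $G_L$-action is immediate since $\delta$ is left-invariant. The argument is correct; two small points are worth tightening. First, when a reduced syllable word whose last syllable $q$ lies in $P_s$ is multiplied on the right by $p\in P_s\setminus\{1\}$, there are three cases, not the two you list: $p$ may be appended as a new syllable, the syllables may cancel ($qp=1$), or they may merge into the nontrivial syllable $qp$, in which case $\overline{g p}=\overline{g}$; this third case is exactly what makes each $s$-panel contain $\lvert P_s\rvert$ chambers rather than two, i.e., what makes $X$ a genuine building rather than the Coxeter complex of $W_L$, so it should be stated explicitly when checking the replacement axiom. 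Second, fix a consistent convention: with $\delta(gK,g'K)=\overline{g^{-1}g'}$, moving $g'$ within its $s$-panel changes $\delta$ by right multiplication by $s$ (or leaves it fixed), whereas moving $g$ gives left multiplication; your sketch mixes the two, which is harmless but should be stated on one side throughout. Also note that $\overline{\phantom{x}}$ is only a set map, not a homomorphism (e.g.\ $P_s=\Z/3$ admits no surjection onto $\Z/2$), so all verifications must indeed go through normal forms as you indicate, rather than through any induced map $G_L\to W_L$.
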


Thus $G_L$ is the fundamental group of a simple complex of groups $G(\mQ)$, where $\mQ$ is the poset of spherical subsets of $S$. Note that $\mQ$ can be identified with the poset of simplices of $L$ ordered by inclusion, together with the smallest element corresponding to the empty set. Consequently, the geometric realisation of $\mQ$ is isomorphic to the cone over the barycentric subdivision of $L$. Moreover, the local group at simplex $\sigma$ of $L$ is the direct product $\prod_{s \in \sigma}P_{s}$ and the local group at $\emptyset$ is the trivial group.\medskip

Theorem~\ref{thm:davgraphroduct} implies that Corollaries~\ref{cor:introbuilding}, \ref{cor:textbuilding_3}, \ref{cor:introdeformationretraction} and \ref{cor:introbuilding_2} apply to $G_L$.

\subsection{Examples}

\begin{example}[Barycentric subdivision and thinning]\label{ex:thick-complex}

The first example shows that the thinning procedure may be intuitively seen as an inverse to the barycentric subdivision.

  Let $X$ be a $G$-simplicial complex with a strict fundamental domain $Y$, let $G(\mathcal Q)$ be the associated complex of groups and let $\mF$ be the family generated by the stabiliser subgroups. Thus $\mQ$ is the poset of simplices of $Y$ (ordered by the reverse inclusion). Assume that $G(\mQ)$ is thin.

  Now let $ X'$ denote the barycentric subdivision of $X$, and consider the induced action of $G $ on  $X'$. The fundamental domain for this action is clearly  $ Y'$.  Let $G(\mathcal Q')$ be the associated simple complex of groups, where  $Q'$ is the poset of simplices of $Y'$. Observe that $G(\mQ')$ is not thin.
  
  One easily sees that the fundamental groups of $G(\mathcal Q)$ and  $G(\mathcal Q')$ are isomorphic, and so are the families generated by local groups. However, we have \[\dB{Q'}=\mathrm{dim}(X')=\mathrm{dim}(X),\] while in general $\dB{Q}$ is strictly less than $\mathrm{dim}(X)$. 

\begin{proposition}
  Let $G(\mQ)$ and $G(\mQ')$ be as above. Let $\mR$ denote the block poset associated to  $G(\mQ')$  and let $G(\mR)$ be the thinning of $G(\mQ')$. Then $\mQ$ and  $\mR$ are isomorphic, and simple complexes of groups $G(\mQ)$ and $G(\mR)$ are simply isomorphic. 
\end{proposition}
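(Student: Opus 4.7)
The plan is to identify simplices of $Y'$ with chains in the poset $\mathcal Q$, then use thinness of $G(\mathcal Q)$ to pin down the block equivalence very concretely. First I would recall that a simplex $c$ of $Y'$ is a chain of simplices $\sigma_{0}\subsetneq\sigma_{1}\subsetneq\cdots\subsetneq\sigma_{k}$ of $Y$ (equivalently a chain $\sigma_{k}<_{\mathcal Q}\cdots<_{\mathcal Q}\sigma_{0}$ in $\mathcal Q$), and that since the $G$-action on $X'$ is admissible, the stabiliser of $c$ equals the intersection of stabilisers of the barycentres $\hat\sigma_{i}$, namely $\bigcap_{i}G_{\sigma_{i}}=G_{\sigma_{k}}=P_{\sigma_{k}}$. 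Thus the local group of $G(\mathcal Q')$ at $c$ is the local group of $G(\mathcal Q)$ at the unique minimum (in $\mathcal Q$) of the chain $c$.

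Next I would analyse the block relation on $\mathcal Q'$. If $c\leqslant_{\mathcal Q'}c'$, so that $c'\subseteq c$ as simplices of $Y'$, then the maximum $\sigma'_{l}$ of $c'$ belongs to $\{\sigma_{0},\dots,\sigma_{k}\}$ and satisfies $\sigma'_{l}\subseteq\sigma_{k}$, which in $\mathcal Q$ reads $\sigma_{k}\leqslant_{\mathcal Q}\sigma'_{l}$. The corresponding structure map of $G(\mathcal Q')$ is the inclusion $P_{\sigma_{k}}\hookrightarrow P_{\sigma'_{l}}$, and by thinness of $G(\mathcal Q)$ this is an isomorphism iff $\sigma_{k}=\sigma'_{l}$. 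Hence $c\sim c'$ iff $c,c'$ share the same maximal simplex of $Y$, and the block containing $c$ is
\[C_{\sigma}=\{\,c\in\mathcal Q'\,:\,\max\nolimits_{Y}(c)=\sigma\,\}\qquad\text{where }\sigma=\max\nolimits_{Y}(c).\]

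I would then verify the poset isomorphism $\mathcal Q\cong\mathcal R$ via $\sigma\mapsto C_{\sigma}$. Surjectivity is immediate from the description of blocks, and injectivity follows because the singleton chain $(\sigma)$ lies in a unique block. For the order, if $\sigma\leqslant_{\mathcal Q}\tau$ (i.e.\ $\tau\subseteq\sigma$ in $Y$) take $c=(\tau\subsetneq\sigma)\in C_{\sigma}$ (or $c=(\sigma)$ if $\sigma=\tau$) and $c'=(\tau)\in C_{\tau}$, whence $c'\subseteq c$, so $C_{\sigma}\leqslant_{\mathcal R}C_{\tau}$; conversely, given any $c\in C_{\sigma}$, $c'\in C_{\tau}$ with $c\leqslant_{\mathcal Q'}c'$, the max $\tau$ of $c'$ is a vertex of $c$, forcing $\tau\subseteq\sigma$.

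Finally I would check that this bijection upgrades to a simple isomorphism $G(\mathcal Q)\cong G(\mathcal R)$. By construction $S_{C_{\sigma}}=P_{(\sigma)}=P_{\sigma}$, and for $C_{\sigma}\leqslant_{\mathcal R}C_{\tau}$ the structure map $\psi_{C_{\tau}C_{\sigma}}:S_{C_{\sigma}}\to S_{C_{\tau}}$ is by definition the inclusion of $P_{\sigma}$ into $P_{\tau}$ inside $G$, which is precisely $\phi_{\tau\sigma}$ of $G(\mathcal Q)$. The main obstacle here is purely bookkeeping: keeping straight the inclusion order of simplices in $Y$ versus $Y'$ against the reverse inclusion used in $\mathcal Q$ and $\mathcal Q'$. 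The underlying geometric fact, that every simplex of $Y'$ remembers its unique largest ambient simplex of $Y$ and that all stabilisers in a chain collapse to the stabiliser of this maximum, is what makes the thinning exactly invert barycentric subdivision.
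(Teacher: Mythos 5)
Your proposal is correct and follows essentially the same route as the paper: identify the local group of a chain with the local group of its maximal simplex, deduce via thinness that the blocks of $\mathcal Q'$ are exactly the sets $C_{\sigma}$ of chains with a fixed maximal simplex $\sigma$, and check that $\sigma \mapsto C_{\sigma}$ is an isomorphism of posets inducing a simple isomorphism of complexes of groups. The paper states this in a few lines and leaves the verifications as "straightforward"; your write-up simply supplies those details (stabiliser of a chain, order preservation in both directions, identification of structure maps), all of which are accurate.
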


\begin{proof}Given a simplex $\sigma \subset Y$, all simplices of $Y'$ of the form $\{\sigma_0 \subset \sigma_1 \subset \ldots \subset \sigma\}$ have the same local group equal to $P_{\sigma}$, where $P_{\sigma}$ is the local group of $G(\mQ)$ at $\sigma$. Thus blocks of $\mQ'$ are of the form $C_{\sigma}=\bigcup_{k}\{\sigma_0 \subset \sigma_1 \subset \ldots \subset \sigma_k \mid \sigma_k =\sigma \}$ and one can define a morphism  $\mQ \to \mR$ by $\sigma \mapsto  C_{\sigma}$. It is straightforward to check that it is an isomorphism and that so is the induced morphism $G(\mQ) \to G(\mR)$.
\end{proof}

\end{example}

\section{Reflection-like actions}\label{sec:reflikeactions}

In this section we introduce \emph{reflection-like actions}, which generalise the actions of reflection groups on Euclidean spaces. Our main application is the construction of new counterexamples to the strong form of Brown's conjecture regarding the equality between $\vcd G$ and $\ugd G$ (see \cite[ch.~2]{brownwall} or \cite[VIII.11]{brownco}):
\begin{conjBrown} Let $G$ be a virtually torsion-free group with $ \vcd G<\infty$. 

\begin{itemize}
\item[(i)] (weak form)  There is a contractible proper $G$-CW-complex of dimension $\vcd G$.
\item[(ii)] (strong form)  $\underline{\mathrm{gd}}G = \vcd G$.
\end{itemize}
\end{conjBrown}

Our counterexamples are similar to those of \cite{LePe}, where the desired group $G$ is a semidirect product of $W_L$ and $F$ where $W_L$ is a right-angled Coxeter group associated to a flag complex $L$ and $F$ is a finite group acting on $L$. However our  method of producing these counterexamples is different. In our case, we require the action of $F$ on $L$ to be reflection-like and rely on an application of Theorem \ref{thm:textrigidgen}.

To the best of our knowledge, the only known example of a reflection-like action that serves as a counterexample to the strong form of Brown's Conjecture is the action of $A_5$ on the $2$-skeleton of the Poincar{\'e} homology sphere (see, \cite[Example 1]{LePe}). In Example~\ref{ex:AS} we generalise this example. The reader may also look at the treatment of this example in \cite{PePry}, where the action is implicitly proven to be reflection-like.

\begin{definition}[Reflection-like action]\label{def:reflikeaction}
  Let $F$ be a group acting admissibly on a connected, flag simplicial complex $L$ of dimension $n \geqslant 1$, and let $Y\subseteq L$ be a strict fundamental domain for this action. We say that such an $F$-action is {\it reflection-like} if:

  \begin{enumerate}[label=(\roman*)]
    \item \label{enum:reflike1} The fundamental domain $Y$ is homeomorphic to the ball $B^n$,
    \item \label{enum:reflike2}  Every interior point of $Y$ has the same stabiliser, which we denote by $F_0$,
    \item \label{enum:reflike3} $F_0$ is a proper subgroup of the stabiliser of any point in $\partial B^n$.
  \end{enumerate} 
  Note that, in particular, part~\ref{enum:reflike3} implies that both the group $F$ and its action on $L$ are non-trivial.
\end{definition}

\begin{remark}\label{rem:easyreflectionlike}
  In the above definition, the assumptions on the action and on the complex $L$ are not very restrictive. Indeed, given an action of $F$ on a polyhedral complex $L$, by taking barycentric subdivision of $L$ one obtains an admissible action on a flag simplicial complex.
\end{remark}

\begin{lemma}\label{lem:reflikechains}
  Consider a reflection-like action of $F$ on $L$ with a strict fundamental domain $Y \subset L$. Let $\mathcal Q$ denote the poset of simplices of $Y$ ordered by the reverse inclusion and let $F(\mathcal Q)$ be the associated simple complex of groups (see Theorem~\ref{thm:scogisfund}). Then, the poset $\mathcal{Q}$ contains a block $C$ with local group $F_0$ such that:
  \begin{enumerate}
    \item $K_{C} = K \cong Y \cong B^{n}$,
    \item $K_{> C}\cong \partial(Y) \cong S^{n-1}$.
  \end{enumerate}
\end{lemma}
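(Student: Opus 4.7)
The plan is to take $C$ to be the set of simplices of $Y$ whose stabiliser equals $F_0$, and then to verify the three claims: that $C$ is a block, that $K_C = K$, and that $K_{>C} \cong \partial Y$. The first step is a geometric characterisation of $C$. Since the action is admissible, the stabiliser of a simplex $\sigma \subset Y$ equals the stabiliser of any interior point of $\sigma$; combining this with conditions (ii) and (iii) of Definition~\ref{def:reflikeaction}, one obtains $P_\sigma = F_0$ if and only if $\sigma^\circ \subseteq Y^\circ$, equivalently $\sigma \not\subseteq \partial Y$. In particular, every top-dimensional simplex of $Y$ belongs to $C$, and by the local manifold structure (using $Y \cong B^n$) every simplex of $Y$ is a face of some $n$-simplex, since otherwise a maximal simplex of dimension $<n$ would have a neighbourhood of too small dimension.

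The main step is to show that $C$ is a single equivalence class under the relation $\sim$. Since the generator is $J' \sim J$ whenever $J' \leqslant_{\mathcal{Q}} J$ with the inclusion $P_{J'} \hookrightarrow P_J$ an isomorphism, any two simplices in $C$ related by a face relation are automatically $\sim$-equivalent. Any $\sigma \in C$ is a face of some $n$-simplex $\sigma' \in C$, hence $\sigma \sim \sigma'$, so it remains to connect any two $n$-simplices. For this I would take a path $\gamma \colon [0,1] \to Y^\circ$ between interior points of the two top-dimensional simplices and put it in general position with respect to the $(n-2)$-skeleton of $Y$; the resulting sequence of $n$-simplices visited by $\gamma$ has the property that consecutive pairs share a codimension-one face whose interior lies in $Y^\circ$, hence belongs to $C$, giving the desired $\sim$-chain. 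The same observation that every simplex of $Y$ is a face of an $n$-simplex in $C$ immediately yields $K_C = K$, since in the reverse-inclusion order this says every element of $\mathcal{Q}$ is $\geqslant$ to some element of $C$.

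Finally, unwinding the definition gives
\[
K_{>C} = \bigl|\{V \in \mathcal{Q} \mid V \geqslant J \text{ for some } J \in C \text{ and } P_V \gneq F_0\}\bigr|.
\]
The first condition is now vacuous, and by the first step the second is equivalent to $V \subseteq \partial Y$. Thus $K_{>C}$ is the order complex of the poset of simplices of $\partial Y$ under reverse inclusion, which is canonically the barycentric subdivision of $\partial Y \cong S^{n-1}$. The main obstacle in this plan is the transversality argument used to connect two $n$-simplices through interior codimension-one faces; although standard, some care is needed to justify that the path $\gamma$ can be perturbed to avoid the codimension-$\geqslant 2$ skeleton while remaining inside the open manifold $Y^\circ$, and to verify the corresponding purity property of the simplicial structure on $Y$ that underlies both the block argument and the equality $K_C = K$.
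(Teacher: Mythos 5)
Your proof is correct and takes essentially the same route as the paper's: the key observation in both is that conditions \ref{enum:reflike2} and \ref{enum:reflike3} of Definition~\ref{def:reflikeaction} force the local group of a simplex to equal $F_0$ exactly when the simplex is not contained in $\partial Y$, and to strictly contain $F_0$ otherwise. The paper then simply asserts that the statement follows directly, while you spell out the omitted details (face-connectedness of the interior simplices so they form a single block, purity of the triangulation of $B^n$ giving $K_C=K$, and the identification of $K_{>C}$ with the barycentric subdivision of $\partial Y$), all of which are correct.
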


\begin{proof}
  The statement follows directly from the definition of a reflection-like action. Indeed, by Definition~\ref{def:reflikeaction}.\ref{enum:reflike2} the local group at any (open) simplex which does not lie on the boundary of $Y \cong B^{n}$, is necessarily equal to $F_0$. On the other hand, by Definition~\ref{def:reflikeaction}.\ref{enum:reflike3} the local group at any simplex on the boundary strictly contains $F_0$.
\end{proof}

\begin{definition}\label{def:coxsemidirect}
  Let $F$ be a finite group with a reflection-like action on a connected, compact, $n$-dimensional, flag simplicial complex $L$ with a strict fundamental domain $Y\subseteq L$. 
  Let $W_L$ be the right-angled Coxeter group associated to $L$. Then the $F$-action of $L$ induces an $F$-action on $W_L$. Let $G=W_L\rtimes F$ be the associated semi-direct product.
\end{definition}

In what follows, unless stated otherwise, let $F$, $L$, $Y$, and $G$ be as in Definition~\ref{def:coxsemidirect}.

\begin{proposition}\label{prop:coxeterisreflike}
  The group $G$ acts on Davis complex $\Sigma_{W_L}$ with strict fundamental domain and this action  is proper and reflection-like.
\end{proposition}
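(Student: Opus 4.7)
The plan is to construct the $G$-action directly from the two given actions, deduce properness formally, and verify the three reflection-like conditions by computing stabilizers on an explicit fundamental domain.

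The $F$-action on $L$ permutes the vertex set $S$ and hence induces an $F$-action on $W_L$ by permuting generators; together with the standard $W_L$-action on $\Sigma_{W_L}$ this assembles into the desired $G$-action. Admissibility on $\Sigma_{W_L}$ is automatic because simplices of $|\mathcal Q|$ correspond to strictly increasing chains of subsets of $S$, so any element fixing such a simplex setwise must fix each of its vertices. Properness is immediate since the $W_L$-action on $\Sigma_{W_L}$ is proper (and cocompact) and $F$ is finite.

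To verify reflection-likeness I would exhibit a concrete strict fundamental domain. Recall that the Davis chamber $K=|\mathcal Q|$ is the cone $CL'$ on the barycentric subdivision of $L$, with cone point corresponding to $\emptyset \in \mathcal Q$. The induced $F$-action fixes the cone point and acts on $L'$ via the subdivision of its action on $L$, so the sub-cone $CY' \subseteq K$ on the barycentric subdivision $Y' \subseteq L'$ of $Y$ is a strict $F$-fundamental domain in $K$, and hence a strict $G$-fundamental domain in $\Sigma_{W_L}$. Since $Y \cong B^n$, we have $CY' \cong CB^n \cong B^{n+1}$; the key topological observation is that the cone point lies on $\partial CY' \cong S^n$, because $\partial CY'$ is the union of the base $Y'$ and the cone $C(\partial Y')$ glued along $\partial Y'$, and the cone point is the apex of $C(\partial Y')$.

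The stabilizer computation is then direct: for $x \in K$ one has $G_x = (W_L)_x \rtimes F_x$, where $(W_L)_x = P_{J_0}$ is the local group at the minimum element $J_0$ of the chain in $\mathcal Q$ whose open simplex contains $x$. Interior points of $CY' \cong B^{n+1}$ are of the form $(y,t)$ with $y \in \mathrm{int}(Y')$ and $0 < t < 1$; they have trivial $W_L$-stabilizer and $F$-stabilizer $F_0$ by hypothesis, giving common $G$-stabilizer $F_0$ and establishing conditions (i) and (ii). The boundary $\partial CY'$ splits into three regions: the cone point, with $G$-stabilizer $F \supsetneq F_0$ by nontriviality of the $F$-action; cone-side points $(y,t)$ with $y \in \partial Y'$ and $0<t<1$, with $G$-stabilizer $F_y \supsetneq F_0$; and base points $(y,1)\in Y'$, whose $W_L$-stabilizer is already a nontrivial parabolic subgroup $P_{\tau_0}$ (where $\tau_0$ is the smallest simplex of $L$ containing $y$). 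In each case the stabilizer properly contains $F_0$, establishing (iii). The main obstacle I expect is the careful topological identification of $CY'$ as $B^{n+1}$ with the cone point lying on $\partial B^{n+1}$ rather than in its interior; this is precisely what allows condition (ii) to survive despite the cone point being fixed by all of $F$.
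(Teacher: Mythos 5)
Your proof is correct and follows essentially the same route as the paper's: identify $C(Y')$ as the strict fundamental domain for $G$, observe $C(Y')\cong B^{n+1}$ with the cone point lying on the boundary sphere, and verify conditions (ii) and (iii) by computing stabilisers $G_x=(W_L)_x\rtimes F_x$ on the interior and on the three boundary strata (base, cone sides, cone point), exactly as in the paper. One small slip: for a base point $y\in Y'$ the $W_L$-stabiliser is the special subgroup generated by the vertex set of the \emph{smallest} simplex in the chain of simplices of $L$ indexing the open simplex of $L'$ containing $y$, not the parabolic of the carrier of $y$ in $L$ as your parenthetical suggests; this does not affect the argument, since all you use is that this subgroup is nontrivial.
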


\begin{proof}
 	The group $G$ acts properly on Davis complex $\Sigma_{W_L}$ with a strict fundamental domain \cite[Lemma~3.5]{LePe}. One easily verifies that the fundamental domain is equal to $C(Y')$, the cone over the barycentric subdivision of $Y$. Since $Y \cong B^n$, we get that $C(Y') \cong B^{n+1}$ and thus part~\ref{enum:reflike1} of Definition~\ref{def:reflikeaction} is satisfied. For parts~\ref{enum:reflike2} and \ref{enum:reflike3} we need to identify $G$-stabilisers of the points in $C(Y')$. Recall that \[ C(Y')=\bigslant{Y' \times [0,1]}{(x,1) \sim (x',1)} \] and let $[x,t]$ denote the equivalence class of a point $(x,t) \in Y' \times [0,1]$. 
	
	\begin{enumerate}

    \item For the points in the interior of $C(Y')$, i.e.,\ points [x,t] where $x \in \mathrm{int}(Y')$ and $t \in (0,1)$ we have $\mathrm{Stab}_G[x,t]=F_0$ (where $F_0$ is the stabiliser of points in $\mathrm{int}(Y)$ with respect to the $F$-action on $L$). This establishes part~\ref{enum:reflike2} of Definition~\ref{def:reflikeaction}.

    \item We have three types of points on the boundary of $C(Y')$:

    \begin{enumerate}

      \item For the points $[x,0] $ where $x \in Y'$ the stabiliser $\mathrm{Stab}_G[x,t]$ is the Cartesian product of at least one generator of $W_L$ and the stabiliser of $x \in Y$ with respect to the $F$-action on $L$.

      \item For the points $(x,t)$ where $x \in \partial(Y')$ and $t \in (0,1)$, the stabiliser $\mathrm{Stab}_G[x,t]$ is equal to the stabiliser $x \in \partial(Y)$ with respect to the $F$-action on $L$.
      
      \item The stabiliser of the point $[x,1]$  is equal to the entire $F$. 

    \end{enumerate}

	Note that in each of the above cases, the stabiliser of $[x,t]$ strictly contains $F_0$. In case a) this follows from the fact that there is at least one generator of $W_L$ in the stabiliser, and in cases b) and c) this follows from the definition of a reflection-like action.
  Thus part~\ref{enum:reflike3} of Definition~\ref{def:reflikeaction} is satisfied, and therefore the $G$-action on $\Sigma_{W_L}$ is reflection-like.\qedhere
  
  \end{enumerate}

\end{proof}

\begin{lemma}\label{lem:reflikecoxeterdim}
 	Let $G(\mathcal Q)$ be a simple complex of groups associated to the $G$-action on $\Sigma_{W_L}$. Then $G$ is isomorphic to the fundamental group of $G(\mathcal Q)$ and  $$\dim D(B, G(\mathcal Q))=\dim D(K, G(\mathcal{Q}))=\ugd G=\ucd G=n+1.$$
\end{lemma}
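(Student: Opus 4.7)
The plan is to combine Proposition~\ref{prop:coxeterisreflike} (transporting the reflection-like structure from $L$ up to the Davis complex) with the rigid case of Theorem~\ref{thm:textrigidgen} applied to the block produced by Lemma~\ref{lem:reflikechains}, and then to control the Bestvina dimension via Proposition~\ref{prop:gendimbest}. Note that the complex $G(\mathcal{Q})$ here is in general not thin (for instance every chain of interior simplices of $C(Y')$ carries the same local group $F_0$), so Theorem~\ref{thm:textmain} is not directly available; the rigid, non-thin machinery is what is used.

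By Proposition~\ref{prop:coxeterisreflike} the group $G$ acts properly and reflection-like on $\Sigma_{W_L}$ with strict fundamental domain $C(Y')\cong B^{n+1}$. Since $\Sigma_{W_L}$ is $\mathrm{CAT}(0)$, hence simply connected, Theorem~\ref{thm:scogisfund} identifies $G$ with the fundamental group of the associated complex of groups $G(\mathcal{Q})$ and yields a $G$-homeomorphism $D(K,G(\mathcal{Q}))\cong\Sigma_{W_L}$. Thus $\dim D(K,G(\mathcal{Q}))=n+1$, and this space is a cocompact model for $\underline{E}G$ because all stabilisers are finite (extensions of spherical subgroups of $W_L$ by subgroups of $F$). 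Consequently $\ucd G\leqslant \ugd G\leqslant n+1$.

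For the matching lower bound on $\ucd G$, Lemma~\ref{lem:reflikechains} applied to the reflection-like $G$-action on $\Sigma_{W_L}$ produces a block $C\subseteq \mathcal{Q}$ with $K_C\cong B^{n+1}$ and $K_{>C}\cong S^n$. Because every local group is finite and hence co-Hopfian, $G(\mathcal{Q})$ is rigid, so Theorem~\ref{thm:textrigidgen} applies and, using $H^{n+1}(B^{n+1},S^n)\cong\Z\neq 0$, yields $\ucd G\geqslant n+1$. Combined with the upper bound above this forces $\ucd G=\ugd G=n+1$.

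For the Bestvina complex, Proposition~\ref{prop:gendimbest} reduces $\dim B$ to the invariant $\dB{Q}$. Since $C(Y')$ is a simplicial complex and $\mathcal{Q}$ is its face poset under reverse inclusion, for every simplex $J\in\mathcal{Q}$ the order complex $K_{>J}$ is homeomorphic to $\partial J$, so taking $J$ top-dimensional gives $\rH^{n}(K_{>J})\neq 0$ and hence $\dB{Q}=n+1$. Proposition~\ref{prop:gendimbest} then yields $\dim B=n+1=\dim D(B,G(\mathcal{Q}))$ whenever $n\neq 1$. The main obstacle is the $n=1$ edge case: there Proposition~\ref{prop:gendimbest} only forces $\dim B\in\{2,3\}$, and one has to argue directly that every link $K_{>J}$ appearing in the Bestvina construction is at most $1$-dimensional and therefore admits a contractible filling of dimension at most two (an interval filling $S^0$, a disk filling an $S^1$-component), so $\dim B=2=n+1$ in this case as well.
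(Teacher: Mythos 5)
Your argument is correct and follows essentially the same route as the paper's proof: identify $G$ with the fundamental group via Theorem~\ref{thm:scogisfund}, note that $\Sigma_{W_L}$ is a cocompact model for $\eg$ with rigid (finite) local groups, and obtain the lower bound $\ucd G\geqslant n+1$ from the block of Lemma~\ref{lem:reflikechains} with $H^{n+1}(B^{n+1},S^n)\neq 0$ via Theorem~\ref{thm:textrigidgen}, the upper bound coming from $\dim\Sigma_{W_L}=n+1$. Your explicit treatment of $\dim D(B,G(\mathcal Q))$ through Proposition~\ref{prop:gendimbest} and the computation $\dB{Q}=n+1$, including the $\dB{Q}=2$ (i.e.\ $n=1$) edge case, supplies a detail the paper's proof leaves implicit, and it is correct.
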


\begin{proof} 
 	Since $\Sigma_{W_L}$ is simply connected, by Theorem~\ref{thm:scogisfund} we conclude that $G$ is isomorphic to the fundamental group of $G(\mathcal Q)$. The $G$-action on $\Sigma_{W_L}$ is proper and cocompact, and since $\Sigma_{W_L}$ is $\mathrm{CAT}(0)$, it follows that $\Sigma_{W_L}$ is a cocompact $G$-CW-model for $\underline{E}G$. Note that $G(\mathcal Q)$ is rigid, since all of its local groups are finite.  
  
 	Thus the assumptions of Theorem~\ref{thm:textrigidgen} are satisfied and we can use it to compute Bredon dimension of $G$. First note that since $\mathrm{dim} (\Sigma_{W_L})=n+1$, we get that $\underline{\mathrm{cd}}G \leqslant n+1$. Thus it suffices to show that  $\underline{\mathrm{cd}}G \geqslant n+1$. By Proposition~\ref{prop:coxeterisreflike} the $G$-action on $\Sigma_{W_L}$ is reflection-like and thus by Lemma~\ref{lem:reflikechains} the poset $Q$ contains a block $C$ such that:
  	\begin{enumerate}
    \item $K_C \cong C(Y') \cong B^{n+1}$,
    \item $K_{> C}\cong \partial(C(Y')) \cong S^n$.
  	\end{enumerate}
  	Since $H^{n+1}(B^{n+1}, S^n) \cong \mathbb{Z} \neq 0$, by Theorem~\ref{thm:textrigidgen}  we have that $\underline{\mathrm{cd}}G \geqslant n+1$. 
\end{proof}

\begin{lemma}\label{lem:reflikevirtualdim} 
  	If $H^n(L)=0$ then $\vcd G\leqslant n$.
\end{lemma}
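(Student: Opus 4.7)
The plan is to first reduce to computing $\vcd W_L$, and then bound $\vcd W_L$ using the hypothesis. Since $W_L$ is a right-angled Coxeter group it is virtually torsion-free, and it has finite index in $G=W_L\rtimes F$, so any torsion-free finite-index subgroup of $W_L$ also sits as a finite-index subgroup of $G$. Hence $G$ is virtually torsion-free and $\vcd G = \vcd W_L$ by the standard commensurability invariance of virtual cohomological dimension.

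To bound $\vcd W_L \leqslant n$, I would apply Corollary~\ref{cor:textbuilding} to the $W_L$-action on its Davis complex $\Sigma_{W_L}$, viewed as the (thin) building of type $(W_L,S_L)$. This yields
\[
\vcd W_L \;=\; \max\bigl\{k \in \mathbb N \,\big|\, \widetilde H^{k-1}(K_{>J}) \neq 0 \text{ for some } J \in \mathcal Q\bigr\},
\]
where $\mathcal Q$ is the poset of spherical subsets of $(W_L,S_L)$. Because the Coxeter system is right-angled and $L$ is flag, $\mathcal Q$ is exactly the face poset of $L$ together with the empty simplex $\emptyset$ as minimum element.

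The core geometric step is the topological identification of each $K_{>J}$. For $J=\emptyset$, $K_{>\emptyset}$ is the geometric realisation of the poset of non-empty simplices of $L$, i.e.\ the barycentric subdivision of $L$, and is therefore homeomorphic to $L$. For a non-empty $j$-simplex $J$, simplices of $L$ strictly containing $J$ are in order-preserving bijection with non-empty simplices of $\mathrm{lk}(J,L)$, so $K_{>J}$ is homeomorphic to $\mathrm{lk}(J,L)$, which has dimension at most $n-j-1 < n$.

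The conclusion then follows quickly: for any non-empty $J \in \mathcal Q$ and any $m \geqslant n$, $\widetilde H^m(K_{>J}) = 0$ on dimension grounds; and for $J=\emptyset$, $\widetilde H^m(L)$ vanishes for $m>n$ by dimension and for $m=n$ by the hypothesis $H^n(L)=0$ (noting that $H^n(L)=\widetilde H^n(L)$ since $L$ is connected and $n\geqslant 1$). Hence the maximum in the displayed formula is at most $n$, giving $\vcd W_L \leqslant n$ and therefore $\vcd G \leqslant n$. The one step requiring care is the standard combinatorial identification $K_{>J} \cong \mathrm{lk}(J,L)$; I do not foresee any genuine obstacle.
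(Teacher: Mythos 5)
Your proof is correct and follows essentially the same route as the paper: both reduce to $\vcd G=\vcd W_L$ and then bound $\vcd W_L$ by the vanishing of top-degree cohomology of $L$ and of the links of its non-empty simplices (which have dimension at most $n-1$). The only cosmetic difference is that you reach this criterion via Corollary~\ref{cor:textbuilding} applied to $W_L$ acting on its Davis complex, together with the identifications $K_{>\emptyset}\cong L$ and $K_{>J}\cong Lk(J,L)$, whereas the paper invokes Dranishnikov's formula \cite{Drani} directly -- and Corollary~\ref{cor:textbuilding} is itself deduced from that same formula, so the underlying content is identical.
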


\begin{proof}
	Since $G$ is a finite extension of $W_L$, we have that $\vcd G=\vcd W_L$. To prove that $\vcd W_L \leqslant n$, by \cite[Theorem~2]{Drani} it suffices to show that $H^n(Lk(\sigma, L))=0$ for every simplex $\sigma$ of $L$. For any non-empty simplex $\sigma$, the link $Lk(\sigma, L)$ is at most $(n-1)$-dimensional, and thus $H^n(Lk(\sigma, L))=0$. If $\sigma$ is empty, we have $Lk(\sigma, L)\cong L$ and by the assumption we have $H^n(L)=0$.
\end{proof}

The following theorem can be used to construct new cocompact counterexamples to the strong form of Brown's Conjecture.

\begin{theorem}\label{thm:reflikemain}
	Let $F$ be a finite group admitting a reflection-like action on a compact, connected, flag simplicial complex $L$  of dimension $n \geqslant 1$. Let $W_L$ be the right-angled Coxeter group associated to $L$ and $G=W_L\rtimes F$ be the associated semi-direct product. Suppose that $H^n(L)=0$. Then  
  	\begin{equation*}
    	\vcd G\leqslant n \quad \text{and} \quad \ucd G=n+1.
  	\end{equation*}
\end{theorem}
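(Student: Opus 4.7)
The plan is to simply combine the two preceding lemmas, which have each been established independently in preparation for this theorem. The hard work has already been done: Lemma~\ref{lem:reflikecoxeterdim} handles the Bredon side and Lemma~\ref{lem:reflikevirtualdim} handles the virtual side, and the theorem is their immediate conjunction.

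For the equality $\ucd G = n+1$, I would invoke Lemma~\ref{lem:reflikecoxeterdim} directly. The core mechanism there is that, by Proposition~\ref{prop:coxeterisreflike}, the $G$-action on the Davis complex $\Sigma_{W_L}$ is proper, cocompact, and reflection-like, so $\Sigma_{W_L}$ is a cocompact model for $\underline{E}G$ of dimension $n+1$, giving the upper bound $\ucd G \leqslant n+1$. The matching lower bound comes from applying Theorem~\ref{thm:textrigidgen} to the associated simple complex of groups $G(\mathcal Q)$ (which is rigid because all local groups are finite): Lemma~\ref{lem:reflikechains} produces a block $C$ whose relative topology is $(K_C, K_{>C}) \cong (B^{n+1}, S^n)$, so $H^{n+1}(K_C, K_{>C}) \cong \mathbb{Z} \neq 0$, forcing $\ucd G \geqslant n+1$.

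For the bound $\vcd G \leqslant n$, I would invoke Lemma~\ref{lem:reflikevirtualdim}. Since $F$ is finite, $G$ is a finite extension of $W_L$ and so $\vcd G = \vcd W_L$. By Dranishnikov's cohomological formula \cite[Theorem~2]{Drani}, $\vcd W_L$ is computed from the reduced cohomology of links of simplices of $L$ together with the cohomology of $L$ itself. Every non-empty simplex has a link of dimension at most $n-1$, contributing nothing in degree $n$, while the contribution of the empty simplex is $H^n(L)$, which vanishes by hypothesis. Hence $\vcd G \leqslant n$.

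Putting the two bounds together yields the stated inequalities; in particular they force $\vcd G < \ucd G$, which is the content needed for the counterexample application to the strong form of Brown's Conjecture. There is essentially no obstacle at this stage since the two preceding lemmas have absorbed all the real work (the geometric identification of the fundamental domain as a cone with reflection-like boundary behaviour on one side, and the link cohomology computation on the other); the theorem itself is a one-line assembly.
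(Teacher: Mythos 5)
Your proposal is correct and follows exactly the paper's proof: the theorem is obtained by combining Lemma~\ref{lem:reflikecoxeterdim} (which gives $\ucd G = n+1$ via Proposition~\ref{prop:coxeterisreflike}, Lemma~\ref{lem:reflikechains} and Theorem~\ref{thm:textrigidgen}) with Lemma~\ref{lem:reflikevirtualdim} (which gives $\vcd G \leqslant n$ via Dranishnikov's formula). Your accounts of the internal mechanisms of both lemmas also match the paper's arguments.
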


\begin{proof}
	The statement follows immediately from combining Lemmas~\ref{lem:reflikecoxeterdim} and~\ref{lem:reflikevirtualdim}.
\end{proof}

\subsection{Examples of reflection-like actions}

It remains to produce examples of groups satisfying the assumptions of Theorem~\ref{thm:reflikemain}. In every example discussed below, the underlying space admits an invariant polyhedral structure, which we will not specify (cf.\ Remark~\ref{rem:easyreflectionlike}).

We begin with the following two preparatory lemmas.

\begin{lemma}\label{lem:productreflike} 
	Suppose we have reflection-like actions of $F_1$ on an $m$-dimensional complex $L_1$ and of $F_2$ on an $n$-dimensional complex $L_2$. Then:

	\begin{enumerate}
    	\item The induced action of $F_1 \times F_2$ on $L_1 \times L_2$ is reflection-like. The fundamental domain is equal to the product of the respective fundamental domains and it is homeomorphic to $B^{m+n}$.

    	\item The induced action of $F_1 \times F_2$ on the join $L_1 \ast L_2$ is reflection-like. The fundamental domain is equal to the join of the respective fundamental domains and it is homeomorphic to $B^{m+n+1}$. 

	\end{enumerate}  
\end{lemma}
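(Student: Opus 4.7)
The plan is to verify the three conditions in Definition~\ref{def:reflikeaction} for each of the two induced actions. In both cases, the key geometric input is that the stated fundamental domain is homeomorphic to a ball of the indicated dimension, and the rest is a routine stabilizer computation exploiting the fact that stabilizers for product and join actions decompose factor-wise.

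For the product, $Y \times Y'$ is a strict fundamental domain for the diagonal action of $F \times F'$ on $L \times L'$, and $Y \times Y' \cong B^m \times B^n \cong B^{m+n}$, giving condition~\ref{enum:reflike1}. The $(F \times F')$-stabilizer of a point $(x,x')$ decomposes as $\mathrm{Stab}_F(x) \times \mathrm{Stab}_{F'}(x')$. At any interior point of $Y \times Y'$ this equals $F_0 \times F_0'$, while at any boundary point we have either $x \in \partial Y$ or $x' \in \partial Y'$, so by condition~\ref{enum:reflike3} applied to the factor actions the stabilizer strictly contains $F_0 \times F_0'$. This yields the remaining two conditions.

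For the join, consider the diagonal action $(f,f') \cdot (tx + (1-t)x') = t(fx) + (1-t)(f'x')$; its strict fundamental domain is $Y \ast Y'$. To identify it with $B^{m+n+1}$, I would embed $Y \cong B^m$ and $Y' \cong B^n$ in complementary affine subspaces of $\mathbb{R}^{m+n+1}$ so that their join coincides with their convex hull, which is a compact convex body of dimension $m+n+1$ and hence homeomorphic to $B^{m+n+1}$. The stabilizer computation splits by join parameter: for $t \in (0,1)$ the stabilizer of $tx+(1-t)x'$ is $\mathrm{Stab}_F(x) \times \mathrm{Stab}_{F'}(x')$, while at the collapsed locus $t=0$ (corresponding to $Y'$) it is $F \times \mathrm{Stab}_{F'}(x')$, and symmetrically at $t=1$. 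Points in the interior of the ball $Y \ast Y'$ correspond precisely to $t \in (0,1)$, $x \in \mathrm{int}(Y)$, $x' \in \mathrm{int}(Y')$, so all such points have stabilizer $F_0 \times F_0'$. Boundary points fall into three types ($t=0$, $t=1$, or $t \in (0,1)$ with $(x,x') \in \partial(Y \times Y')$); in each case the reflection-like assumption on the factor actions, together with the non-triviality of $F$ and $F'$ built into condition~\ref{enum:reflike3}, ensures that at least one stabilizer factor strictly contains the corresponding $F_0$ or $F_0'$, so condition~\ref{enum:reflike3} is satisfied.

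The only topological subtlety is the identification $B^m \ast B^n \cong B^{m+n+1}$ in the join case; once that is in place the verification of conditions \ref{enum:reflike2} and \ref{enum:reflike3} is elementary. Admissibility of the induced actions is inherited from the factors, after passing to a barycentric subdivision if necessary as in Remark~\ref{rem:easyreflectionlike}.
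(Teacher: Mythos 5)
Your proposal is correct and is exactly the routine verification the paper has in mind: the paper offers no details, stating only that the lemma ``follows at once from the definition of a reflection-like action,'' and your factor-wise stabiliser analysis together with the identifications $B^m\times B^n\cong B^{m+n}$ and $B^m\ast B^n\cong B^{m+n+1}$ (via the convex-hull model) supplies precisely that check. In particular you correctly handle the two non-obvious points: that the interior of $Y\ast Y'$ corresponds to join parameter $t\in(0,1)$ with both coordinates interior, and that at the cone loci the strict containment $F_0'\lneq F'$ (forced by condition (iii) for the factor action) gives the required strict growth of stabilisers.
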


	The proof is straightforward and follows at once from the definition of a reflection-like action.

\begin{lemma}\label{lem:topdimcoh}
	Let $L_1$ be an $m$-dimensional finite complex and $L_2$ be an $n$-dimensional finite complex. Assume that either 
	\begin{enumerate} 
		\item \label{it:topcohvanishes1} $H^{m}(L_1)=0$, or
		\item \label{it:comprimetorfunctor} $H_m(L_1)=0$, $H_n(L_2)=0$, and $\mathrm{Tor}(H_{m-1}(L_1), H_{n-1}(L_2))=0$.
	\end{enumerate}
	Then $H^{m+n}(L_1 \times L_2)=0$ and $H^{m+n+1}(L_1 \ast L_2)=0$.
\end{lemma}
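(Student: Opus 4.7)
The plan is to use the cohomology Künneth formula together with the fact that the top cohomologies $H^m(L)$ and $H^n(L')$ are the only ones that can possibly contribute in top degrees, and then show the relevant tensor product vanishes in both cases.

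First I would apply the cohomology Künneth short exact sequence to $L\times L'$: since both are finite CW-complexes, we have
\[
0\to \bigoplus_{i+j=m+n} H^i(L)\otimes H^j(L')\to H^{m+n}(L\times L')\to \bigoplus_{i+j=m+n+1}\mathrm{Tor}(H^i(L),H^j(L'))\to 0.
\]
Using $\dim L=m$ and $\dim L'=n$, so $H^i(L)=0$ for $i>m$ and $H^j(L')=0$ for $j>n$, the Tor term is empty and the tensor term collapses to a single summand, giving
\[
H^{m+n}(L\times L')\cong H^m(L)\otimes H^n(L').
\]
For the join, I would use the homotopy equivalence $L\ast L'\simeq \Sigma(L\wedge L')$ and the analogous reduced Künneth formula for the smash product to obtain $\tilde H^{m+n+1}(L\ast L')\cong \tilde H^{m+n}(L\wedge L')\cong H^m(L)\otimes H^n(L')$. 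Thus both cohomology groups in question reduce to computing $H^m(L)\otimes H^n(L')$.

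It then remains to show that this tensor product vanishes under either hypothesis. Case~\ref{it:topcohvanishes1} is immediate: if $H^m(L)=0$ the tensor product is zero. For case~\ref{it:comprimetorfunctor}, I would invoke the universal coefficient theorem: since $H_m(L)=0$, one has
\[
H^m(L)\cong \mathrm{Ext}(H_{m-1}(L),\mathbb{Z})\cong T(H_{m-1}(L)),
\]
the torsion subgroup of $H_{m-1}(L)$ (the latter is a finitely generated abelian group since $L$ is finite). Similarly $H^n(L')\cong T(H_{n-1}(L'))$. For finitely generated abelian groups $A$, $B$, the standard identification $\mathrm{Tor}(A,B)\cong T(A)\otimes T(B)$ then gives
\[
H^m(L)\otimes H^n(L')\cong T(H_{m-1}(L))\otimes T(H_{n-1}(L'))\cong \mathrm{Tor}(H_{m-1}(L),H_{n-1}(L'))=0,
\]
by assumption.

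There is no real obstacle here; the only small subtlety is justifying the join-suspension equivalence and the identification $\mathrm{Tor}(A,B)\cong T(A)\otimes T(B)$ for finitely generated $A,B$, both of which are standard. Finiteness of $L$ and $L'$ guarantees that all Künneth and UCT short exact sequences apply with $\mathbb{Z}$-coefficients without additional hypotheses.
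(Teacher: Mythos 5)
Your argument is correct. The paper's own proof is only a one-line sketch citing the K\"unneth formula, the universal coefficient theorem, and the Mayer--Vietoris sequence, and you use essentially the same toolkit, with two mild deviations. First, you handle the join via the equivalence $L\ast L'\simeq \Sigma(L\wedge L')$ and the reduced K\"unneth sequence for the smash, whereas the paper's sketch points at the Mayer--Vietoris sequence for the decomposition $L\ast L'=(CL\times L')\cup_{L\times L'}(L\times CL')$, which (using $m,n\geqslant 1$ and contractibility of the cone pieces) gives the same identification $H^{m+n+1}(L\ast L')\cong H^{m+n}(L\times L')$; both routes are standard and equally short. Second, you apply the cohomological K\"unneth sequence directly, reducing both groups to the single term $H^{m}(L)\otimes H^{n}(L')$, and only then translate to homology via UCT and the identity $\mathrm{Tor}(A,B)\cong T(A)\otimes T(B)$ for finitely generated $A,B$; the homology-first route implicit in the paper instead shows $H_{m+n}$ vanishes and $H_{m+n-1}$ is torsion-free in the relevant range before applying UCT. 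Your reduction has the small advantage of making hypothesis (2) transparent, since it turns exactly into the vanishing of $\mathrm{Tor}(H_{m-1}(L),H_{n-1}(L'))$. The finiteness of $L$ and $L'$ justifies all the finite-generation hypotheses you invoke, and the implicit assumption $m,n\geqslant 1$ (needed for reduced versus unreduced cohomology and for the join--suspension step) holds in every application in the paper, so there is no gap.
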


Note that the assumption $\mathrm{Tor}(H_{m-1}(L_1), H_{n-1}(L_2))=0$ is equivalent to torsion subgroups of $H_{m-1}(L_1)$ and $H_{n-1}(L_2)$ having coprime orders. 

\begin{proof}
	The claim follows easily from the K\"{u}nneth formula, the Universal Coefficients Theorem and the Mayer-Vietoris sequence for the join and the product.
\end{proof}

Note that Lemma~\ref{lem:productreflike} gives an easy way of producing new examples of reflection-like actions out of old ones, and Lemma~\ref{lem:topdimcoh} can be used to ensure that  top-dimensional cohomology of the product/join will vanish. In order to construct genuinely new examples with vanishing top-dimensional cohomology, we first construct examples that do have non-zero top-dimensional cohomology, and then combine them into products or joins and use Lemma~\ref{lem:topdimcoh} to ensure that the top-dimensional cohomology vanishes.

The summary of the constructed examples is presented in Table~\ref{table:1}.

\begin{table}[h!]

\centering 
\begin{tabular}{|c|c|c|c|c| } 
	\hline
	Example &$F$ & $L$ & $\mathrm{dim}(L)$ & $H^{\mathrm{dim}(L)}(L) = 0$? \\ \hline

 	\ref{ex:finitereflectiongroup}& $F\leqslant O(n+1)$, $F$ finite & $S^{n}$ & $n$ &  no \\ 

	\ref{ex:projectiveplane}& $(\mathbb{Z}/2)^{n} $& $\mathbb{R}P^{n}$ & $n$ & no \\ 

	\ref{ex:AS}& $PGL_2(q)$, $q=2^{a}$, $ a \geqslant 2$ & $L_q$ & $2$ & no, unless $q=4$ \\ 
	
	\ref{ex:dihedralmoore}& $D_k$ & $M_k$   & $2$ & no \\ 

	\ref{ex:joinASprojective}& $(\mathbb{Z}/2)^{n} \times PGL_2(q)$, $n$ even & $\mathbb{R}P^{n} \times L_q $ & $n+2$ & yes \\ 

	&  & $\mathbb{R}P^{n} \ast L_q $ & $n+3$ & yes \\ 

	\ref{ex:twocoprimemoorespaces}& $D_k \times D_l$, $k$ and $l$ coprime & $M_k \times M_l$   & $4$ & yes \\

	& & $M_k \ast M_l$   & $5$ & yes \\

	\ref{ex:projectiveplanemoorespace}& $(\mathbb{Z}/2)^{n} \times D_k$, $n$ even,  $k$ odd & $\mathbb{R}P^{n}\times M_k$   & $n+2$ & yes \\

	&  & $\mathbb{R}P^{n}\ast  M_k$   & $n+3$ & yes \\

	 \hline

\end{tabular}

\vspace{10pt}

\caption{Examples of reflection-like actions, together with an indication whether they satisfy the assumptions of Theorem~\ref{thm:reflikemain}.}
\label{table:1}
\end{table}

\begin{example}[Finite reflection group]\label{ex:finitereflectiongroup}
	Let $F \leqslant O(n)$ be a finite subgroup generated by  orthogonal reflections across hyperplanes in $\mathbb{R}^n$ (see \cite[Chapter~6]{Davbook}). Then the induced action of $F$ on the unit sphere $S^{n-1}\subset \mathbb{R}^n$ is reflection-like.
\end{example}

\begin{example}\label{ex:projectiveplane}
	Consider the action of $\mathbb{Z}/2$ on $\mathbb{R}$ given by $x \mapsto -x$ and consider the product action of $(\mathbb{Z}/2)^n$ on $\mathbb{R}^n$. Factoring out the action of the antipodal map $\iota \in (\mathbb{Z}/2)^n$, we obtain an action of  $(\mathbb{Z}/2)^n/\langle \iota \rangle \cong (\mathbb{Z}/2)^{n-1}$ on the real projective space $\mathbb{R}P^{n-1}$. One easily verifies that this action is reflection-like, with the quotient being an $(n-1)$-simplex.
\end{example}

The above example is a special case of the so-called \emph{small cover} of Davis and Januszkiewicz \cite{DJ}, which is an $n$-dimensional manifold together with a reflection-like action of $(\mathbb{Z}/2)^n$ whose quotient is isomorphic to an $n$-dimensional simple polytope.

\begin{example}[Aschbacher-Segev]\label{ex:AS}
  	We outline a construction of a reflection-like action of the group $F=PGL_2(q)$ for $q=2^a, a\geqslant 2$ on a flag $2$-complex $L =L_q$  in order to illustrate the underlying simple complex of finite groups $F(\mathcal{Q})$. For more details we refer to \cite[Section 9]{AS}. 

  	For the $1$-skeleton $L_q^{(1)}$ take the barycentric subdivision of the complete graph on the projective line of  $q+1$ points $v_1, \dots, v_{q+1}$ with the natural action of $F$. Fix a single conjugacy class $\mathcal C$ of cycles of order $q+1$ in $F$. Every cycle of order $q+1$ is conjugate to its inverse. Therefore, there are $q(q-1)/2$ pairs of $(q+1)$-cycles $(\sigma_i, \sigma^{-1}_i)$ in $\mathcal C$. Define $L_q$ by attaching that many $(q+1)$-gons using the cycles $\sigma_i$ to describe the attaching maps. Each $2$-cell becomes a cone on its subdivided $(q+1)$-gonal boundary where $\sigma_i$ acts by fixing the cone point. The $2$-simplices of $L_q$ are $q(q^2-1)$ right-angled triangles on which $F$ acts simply transitively. Each one is a strict fundamental domain. Let $Y$ be such a fundamental domain that contains a vertex $v_j$ whose stabiliser is the Borel subgroup $B$ of upper triangular matrices in $F$.

	Figure~\ref{fig:reflike} a) shows the fundamental domain $Y$ together with local groups at cells. Figure~\ref{fig:reflike} b) shows the fundamental domain $C(Y')$ for the associated action of $W_L \rtimes F$ on $\Sigma_{W_L}$ together with local groups at vertices. Local groups at cells are given by the respective intersections of local groups at vertices. The generators of $W_L$ corresponding to vertices of $Y$ are denoted by $s_1,s_2,$ and $s_3$.\smallskip

	$(\ast)$ For small values of $q$, the complex $L_q$ is known to be $\mathbb{Q}$-acyclic, with first homology either trivial or elementary abelian of order $r^{q-1}$, where $r$ is an odd prime. For $q=4$, the complex $L_q$ is homeomorphic to the Poincar{\'e} dodecahedron, and hence it is acyclic. 
\end{example}

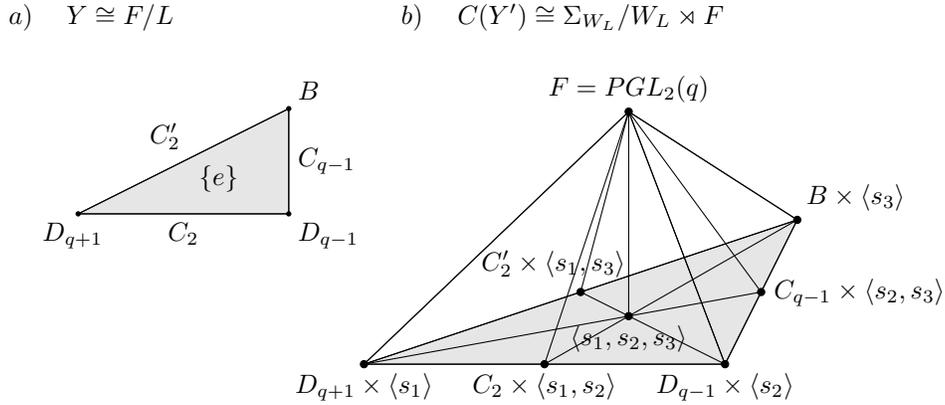
\begin{figure}[!h]
\centering
\begin{tikzpicture}[scale=0.4]

\definecolor{torange}{RGB}{250,125,000}
\definecolor{lorange}{RGB}{250,200,075}
\definecolor{vlgray}{RGB}{230,230,230}
\definecolor{tpurple}{RGB}{100,150,00}
\definecolor{lpurple}{RGB}{150,200,050}
\definecolor{blue}{RGB}{000,50,200}

\begin{scope}[shift={(-5.5,5)},scale=0.7]

\draw[fill=vlgray] (-5,0)--(5,0)--(5,5)--(-5,0);

\node   at (0.333*5,0.333*5)   {$\{e\}$};

\node [below]  at (-5.25,0)   {$D_{q+1}$};

\node [below ]  at (0,0)   {$C_2$};

\node [below right]  at (5,0)   {$D_{q-1}$};

\node [right]  at (5,2.5)   {$C_{q-1}$};

\node [above right]  at (5,5)   {$B$};

\node [above left]  at (0.3,2.6)   {$C_2'$};

\draw (-5,0)--(5,0)--(5,5)--(-5,0);

\draw[fill]  (-5,0)  circle [radius=0.1];
\draw[fill]  (5,0)  circle [radius=0.1];
\draw[fill]  (5,5)  circle [radius=0.1];

\end{scope}

\begin{scope}[shift={(6.5,0)},scale=1.2]

\draw[fill=vlgray] (-5,0)--(5,0)--(7,4)--(-5,0);

%OPACBG

\node [above]  at (-14.5,9)   {$a)$};

\node [above right]  at (-13.5,9)   {$Y \cong F/L$};

\node [above]  at (0.333*7-6,9)   {$b)$};

\node [above right]  at (0.333*7-5,9)   {$C(Y') \cong \Sigma_{W_L}/ W_L \rtimes F$};

\node [above]  at (0.333*7,7)   {$F=PGL_2(q)$};

\node [below ]  at (-5,0)   {$D_{q+1} \times \langle s_1 \rangle$};

\node [below ]  at (0,0)   {$C_2 \times \langle s_1,s_2 \rangle$};

\node [below]  at (5,0)   {$D_{q-1} \times \langle s_2 \rangle$};

\node [right]  at (6.1,2)   {$C_{q-1} \times \langle s_2,s_3 \rangle$};

\node [above right]  at (7,4)   {$B \times \langle s_3 \rangle$};

\node [below]  at (0.333*7,0.333*4)   {$\langle s_1,s_2, s_3 \rangle$};

\draw (0.333*7, 7)--(0.333*7,0.333*4);

\draw (-5,0)--(5,0)--(7,4)--(-5,0);

\draw (0,0)--(7,4);
\draw (5,0)--(1,2);
\draw (-5,0)--(6,2);

\draw (0.333*7, 7)--(0,0);
\draw (0.333*7, 7)--(7,4);
\draw (0.333*7, 7)--(5,0);
\draw (0.333*7, 7)--(1,2);
\draw (0.333*7, 7)--(-5,0);
\draw (0.333*7, 7)--(6,2);

\draw (-5,0)--(0.333*7,7);
\draw (5,0)--(0.333*7,7);
\draw (7,4)--(0.333*7,7);

\draw[fill]  (0,0)  circle [radius=0.1];

\draw[fill]  (1,2)  circle [radius=0.1];
\draw[fill]  (6,2)  circle [radius=0.1];
\draw[fill]  (0.333*7, 0.333*4)  circle [radius=0.1];

\draw[fill]  (-5,0)  circle [radius=0.1];
\draw[fill]  (5,0)  circle [radius=0.1];
\draw[fill]  (7,4)  circle [radius=0.1];

\draw[fill]  (0.333*7, 7)  circle [radius=0.1];

\node [above left]  at (2.5,2.15)   {$C_2' \times \langle s_1,s_3 \rangle$};

\end{scope}

\end{tikzpicture}
\caption{Fundamental domains $Y$ (a) and $C(Y')$ (b) together with stabilisers of cells and vertices respectively.}
\label{fig:reflike}
\end{figure} 

\begin{example}[Dihedral group acting on a Moore space]\label{ex:dihedralmoore}
	For a natural number $k\geqslant 2$, let $M_k$ denote the Moore space $M(\mathbb{Z}/k, 1)$, i.e.,\ a space obtained by attaching a disk to a circle along the map of degree $k$. Thus we have $\widetilde{H}_1(M_k) \cong \mathbb{Z}/k$ and $\widetilde{H}_i(M_k)=0$ for all $i \neq 1$. We will describe a reflection-like action of the dihedral group $D_k$ on $M_k$. Recall that $D_k$ is generated by two reflections $s$ and  $t$ and their product $st$ is a rotation of order $k$.

	Consider the standard action of $D_k$ on a $k$-gon and the reflection action of $D_k/ \langle st\rangle \cong \mathbb{Z}/2$ on a circle, both shown in Figure~\ref{fig:moore} a) (note that both actions reverse the orientation of the edges). The attaching map of the boundary of the $k$-gon is equivariant with respect to the homomorphism $D_k \to D_k/ \langle st\rangle \cong \mathbb{Z}/2$, and thus we get a well-defined action of $D_k$ on $M_k$. One easily checks that this action has a strict fundamental domain, which is a triangle. The fundamental domain together with its cell stabilisers is shown in Figure~\ref{fig:moore} b). By analysing the stabilisers, we conclude that the action of $D_k$ on $M_k$ is reflection-like.

	We remark that in this setting $M_k$ is homeomorphic to the Basic Construction $D(\abs{\mQ}, G(\mQ), \psi)$, where $G(\mQ)$ is a simple complex of groups associated to the $D_k$-action on $M_k$, and $\psi \colon G(\mQ) \to D_k$ is a simple morphism induced by sending all three vertex groups $D_k$ into $D_k$ via the identity map.

	Finally, observe that for $k=2$ in the above construction, $D_2 \cong \mathbb{Z}/2\times \mathbb{Z}/2$ is an isometry group of a $2$-gon and $M_k$ is equivariantly homeomorphic to the real projective plane $\mathbb{R}P^{2}$ appearing in Example~\ref{ex:projectiveplane}.
\end{example}

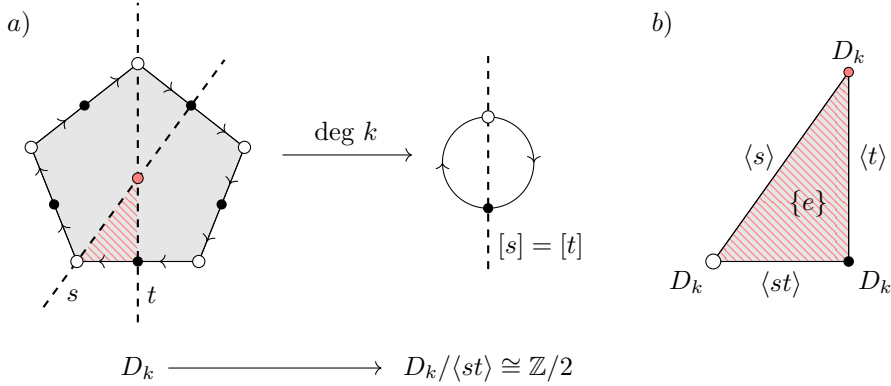
\begin{figure}[!h]
\centering
\begin{tikzpicture}[scale=0.45]

\definecolor{torange}{RGB}{250,125,000}
\definecolor{lorange}{RGB}{250,200,075}

\definecolor{vlgray}{RGB}{230,230,230}
\definecolor{tpurple}{RGB}{100,150,00}
\definecolor{lpurple}{RGB}{150,200,050}

\definecolor{blue}{RGB}{000,50,200}

\node   at (-24.5,7)   {$a)$};

\node   at (-5.5,7)   {$b)$};

%%%% FUNDAMENTAL DOMAIN

\begin{scope}[scale=0.8]

\draw[fill=vlgray] (-5,0)--(0,0)--(0,7)--(-5,0);

\draw[ pattern= custom north west lines,hatchcolor=red!50] (-5,0)--(0,0)--(0,7)--(-5,0);

\node   at (-1.5,2.15)   {$\{e\}$};

\node [below left]  at (-5,0)   {$D_{k}$};

\node [below ]  at (-2.5,0)   {$\langle st \rangle$};

\node [below right]  at (0,0)   {$D_{k}$};

\node [above right]  at (0,3)   {$\langle t \rangle$};

\node [above]  at (0,7)   {$D_k$};

\node [above left]  at (-2.35,3)   {$\langle  s \rangle$};

\draw[fill=white]  (-5,0)  circle [radius=0.275];

\draw[fill]  (0,0)  circle [radius=0.175];
\draw[fill=red!50]  (0,7)  circle [radius=0.175];

\end{scope}

%%%% CONSTRUCTION OF THE MOOR SPACE, ngon

\begin{scope}[shift={(-21,0)},scale=0.9]

  \draw[fill=vlgray] (-2,0)--(-0,0)--(2,0)--(2.75,1.875)--(3.5,3.75)--(1.75,5.125)--(0,6.5)-- (-1.75,5.125)--(-3.5,3.75)--(-2.75,1.875)--(-2,0);
\draw[vlgray, pattern= custom north west lines,hatchcolor=red!50] (-2,0)--(0,0)--(0,2.735)--(-2,0);

\begin{scope}[yscale=1,xscale=-1]
\draw[->-] (-2,0)--(-0,0);

\draw[->-] (-0,0)--(2,0);
\draw[->-] (2,0)-- (2.75,1.875);
\draw[->-] (2.75,1.875)--(3.5,3.75);
\draw[->-] (3.5,3.75)--(1.75,5.125);
\draw[->-] (1.75,5.125)--(0,6.5);
\draw[->-] (0,6.5)-- (-1.75,5.125);
\draw[->-] (-1.75,5.125)--(-3.5,3.75);
\draw[->-] (-3.5,3.75)--(-2.75,1.875);
\draw[->-] (-2.75,1.875)--(-2,0);

\end{scope}

\draw[thick, dashed] (0,-2)--(0,8.5);

\node [right]  at (0,-1.1)   {$t$};

\draw[thick, dashed]  (-2-0.732*1.5,0-1.5)--(-2,0)--(1.75,5.125)--(1.75+0.732*1.5,5.125+1.5);

\draw[fill=red!50]  (0,2.735)  circle [radius=0.175];

\node [right]  at (-2.65,-1.1)   {$s$};

\node   at (0,-3.5)   {$D_k$}; 
\draw[->] (1,-3.5)--(8,-3.5);

\node   at (11.5,-3.5)   {$D_k/\langle st \rangle \cong \mathbb{Z}/2$}; 

\draw[fill=white]  (-2,0)  circle [radius=0.2];
\draw[fill=white]  (2,0)  circle [radius=0.2];
\draw[fill=white]  (3.5,3.75) circle [radius=0.2];
\draw[fill=white]  (0,6.5) circle [radius=0.2];
\draw[fill=white]  (-3.5,3.75) circle [radius=0.2];

\draw[fill]  (-0,0)  circle [radius=0.15];
\draw[fill]  (2.75,1.875)  circle [radius=0.15];
\draw[fill]  (1.75,5.125) circle [radius=0.15];
\draw[fill]  (-2.75,1.875)  circle [radius=0.15];
\draw[fill]  (-1.75,5.125) circle [radius=0.15];

%%%% CONSTRUCTION OF THE MOOR SPACE, circle

\begin{scope}[shift={(11.5,0)}]

\draw[->] (-6.75,3.5)--(-2.5,3.5);
\node [above]  at (-4.8,3.5)   {$\text{ deg } k$};

 \draw[thick, dashed] (0,-0.25)--(0,6.75);

 \node[right]   at (0,0.5)   {$[s]=[t]$};

\draw  (0,3.25)  circle [radius=1.5];

\draw[fill=white]  (0,4.75) circle [radius=0.2];

\draw[fill]  (0,1.75)  circle [radius=0.15];

\draw[->-] (-1.5,3.25)--(-1.5,3.2501);
\draw[->-] (1.5,3.25)--(1.5,3.2499);

\end{scope}

\end{scope}

\end{tikzpicture}
\caption{a) Reflection like action of $D_k$ on a Moore space $M_k$. b) The fundamental domain together with local groups at cells.}
\label{fig:moore}

\end{figure} 

We are ready now to construct new counterexamples to the strong form of Brown's Conjecture. 

\begin{example}\label{ex:joinASprojective} 
	Let $L_q$ be a complex as in Example~\ref{ex:AS} satisfying $(\ast)$. For an even integer $n$, consider the induced reflection-like actions of the product $(\mathbb{Z}/2)^{n} \times PGL_2(q)$ on the product $\mathbb{R}P^{n} \times L_q$  and on the join $\mathbb{R}P^{n} \ast L_q$. 

 	Since $H_n(\mathbb{R}P^{n})=0$, $H_{n-1}(\mathbb{R}P^{n})=\mathbb{Z}/2$, $H_2(L_q)=0$ and $H_1(L_q)$ is either trivial or elementary abelian of order being a power of an odd prime, by Lemma~\ref{lem:topdimcoh} we conclude that $H^{n+2}(\mathbb{R}P^{n} \times L_q)=0$ and $H^{n+3}(\mathbb{R}P^{n} \ast L_q)=0$. 
\end{example}

\begin{example}\label{ex:twocoprimemoorespaces}
	Consider $M_k$ and $M_l$ such that $k$ and $l$ are coprime. By Lemma~\ref{lem:topdimcoh} we get that $H^{4}(M_k \times M_l)=0$ and $H^{5}(M_k \ast M_l)=0$ (in fact $M_k \ast M_l$ is contractible).
\end{example}

\begin{example}\label{ex:projectiveplanemoorespace}
	For an even integer $n$ and an odd integer $k$ consider the action of $(\mathbb{Z}/2)^{n}$ on the real projective space $\mathbb{R}P^{n}$, and the action of $D_k$ on the Moore space $M_k$. By Lemma~\ref{lem:topdimcoh} we have $H^{n+2}(\mathbb{R}P^{n} \times M_k)=0$ and $H^{n+3}(\mathbb{R}P^{n} \ast M_k)=0$.
\end{example}

\begin{remark} In contrast to Example~\ref{ex:AS} (and \ref{ex:joinASprojective}), Examples~\ref{ex:twocoprimemoorespaces} and \ref{ex:projectiveplanemoorespace} are particularly simple in terms of algebraic structure of groups and cellular structure of complexes. The smallest group appearing in these examples is the product $D_2 \times D_3 \cong (\mathbb{Z}/2)^{2} \times  S_3$.
\end{remark}

\section{Final remarks and open questions}\label{sec:questions}

	Let $X$ be a $G$-CW-complex. We say that a $G$-CW-subcomplex $Y$ is a {\it spine} of $X$ if it is an equivariant deformation retract of $X$. When $X$ is a model for $\EFG$,  then so is $Y$ and $\dim (Y)\geqslant \fgd G$.   Spines of minimal dimension (so equal to $\fgd G$) have been constructed, for example, for the actions of certain arithmetic groups such as $\mbox{SL}(n, \Z)$ on the symmetric space \cite{Ash}, the actions of the outer automorphism groups of free groups on the Outer space \cite{Vog}, and the actions of the mapping class groups of punctured  surfaces on the Teichm\"{u}ller space \cite{Har}.

	\begin{ques}\label{ques:spine}
	  Suppose a group $G$ acts on a $\mathrm{CAT}(0)$ polyhedral complex $X$ with a strict fundamental domain. Denote by $\mF$  the family generated by the stabilisers. Suppose the associated complex of groups $G(\mathcal Q)$ is thin.  Can  $D(B, G(\mathcal{Q}))$ be constructed as a spine of $X$ of the lowest possible dimension equal to $\fgd G$?
	\end{ques}

  Theorem \ref{thm:deformationretraction} tells us  that the answer is yes if  $ \fcd G\leqslant 1$. Also by Theorem \ref{thm:intromain}, we know that $\dim D(B, G(\mathcal{Q}))=\fgd G$ and $D(B, G(\mathcal{Q}))$ is $G$-homotopy equivalent to $X$.  The question whether $D(B, G(\mathcal{Q}))$ can be constructed as an equivariant deformation retract of $X$ is open in general. In \cite{PePry}, we isolate a condition on a finite polyhedra which we call {\it subconical}. It is open whether every finite polyhedron is subconical. If this is the case, then a generalisation of  Proposition 4.7 of \cite{PePry} to thin simple complexes of groups gives an affirmative answer to this question.

  \begin{ques}\label{ques:cat(0)metric} 
    Does $D(B, G(\mathcal{Q}))$ attain the CAT(0) dimension of the group $G$?
  \end{ques}

  In many cases of interest, such as Coxeter groups or groups acting on buildings, the associated standard development $D(K, G(\mathcal{Q}) )$ supports a $G$-invariant $\mathrm{CAT}(0)$ metric. Therefore it is natural to ask whether Bestvina complex supports such a metric as well, or whether the dimension of Bestvina complex is equal to the \emph{$\mathrm{CAT}(0)$ dimension of the group for the family $\mathcal F$}. The latter is defined as the minimal dimension of a  model for $\EFG$ that supports a $G$-invariant $\mathrm{CAT}(0)$ metric.

  There are simple complexes of groups where the corresponding Bestvina complex does not admit any $G$-invariant piecewise linear $\mathrm{CAT}(0)$ metric \cite{Pry}. Moreover, we suspect that these examples also have $\mathrm{CAT}(0)$ dimension strictly larger than the Bredon cohomological dimension. The above examples are the right-angled Coxeter groups (or graph products) associated to certain $2$-dimensional contractible but non-collapsible complexes. Consequently, the methods used to show the lack of $\mathrm{CAT}(0)$ metric do not carry through to higher dimensions, and to the best of our knowledge the question is open in all dimensions greater than $2$.  

  The question is especially interesting when $\mF$ is the family of all finite subgroups. In this case, the metric structure of $\underline{E}G$ can be used to study numerous features of $G$ e.g., by considering the visual boundary of $\underline{E}G$. Note that the positive answer to that question, combined with Example \ref{ex:twocoprimemoorespaces} (or \ref{ex:projectiveplanemoorespace}), would result in a group of $\mathrm{CAT}(0)$ dimension four, whose finite-index overgroup has $\mathrm{CAT}(0)$ dimension equal to five.

\begin{ques}\label{ques:weakbrown}
  Are the groups $G$ constructed in Example \ref{ex:twocoprimemoorespaces} or \ref{ex:projectiveplanemoorespace} also counterexamples to the weak form of Brown's conjecture?
\end{ques}

The weak form of Brown's conjecture is open in all dimensions except when $\vcd G = 2$ \cite{LePe}.  A natural place to look  for counterexamples are the groups that disprove the strong form of Brown's conjecture. Yet, most such groups $G$ are known to act properly on a contractible complex of dimension $\vcd G$. Take for example $G=W_L\rtimes F$. If $L$ is contractible (see \cite[\S 5]{LePe} for examples), then there is a contractible subcomplex $Y$ of $\Sigma_{W_L}$ of dimension $\vcd G$ on which $G$ acts properly. The subcomplex $Y$ can be obtained by applying the Basic Construction to $L'$ instead of $CL'$. Similarly, the finite extensions of Bestvina-Brady groups  constructed in \cite{LeNu} or \cite[3.6]{marper}, cannot be counterexamples to the weak form of the conjecture, because they act properly on the level sets of the Morse function which in these examples are contractible.

\begin{bibdiv}

\begin{biblist}

%\small

\bib{AS}{article}{
    author= {Aschbacher, Michael},
    author = {Segev, Yoav}
     TITLE = {A Fixed Point Theorem for Groups Acting on Finite 2-Dimensional Acyclic Simplicial Complexes},
   JOURNAL = {Proc. London Math. Soc.},
    VOLUME = {s3--67},
      YEAR = {1993},
    NUMBER = {2},
     PAGES = {329--354},
%      ISSN = {0025-5831},
%   MRCLASS = {20G30 (22E40)},
%  MRNUMBER = {0427490},
%MRREVIEWER = {E. Vinberg},
%       URL = {https://doi.org/10.1007/BF01364892},
}

\bib{Ash}{article}{
    AUTHOR = {Ash, Avner},
     TITLE = {Deformation retracts with lowest possible dimension of
              arithmetic quotients of self-adjoint homogeneous cones},
   JOURNAL = {Math. Ann.},
  FJOURNAL = {Mathematische Annalen},
    VOLUME = {225},
      YEAR = {1977},
    NUMBER = {1},
     PAGES = {69--76},
%      ISSN = {0025-5831},
%   MRCLASS = {20G30 (22E40)},
%  MRNUMBER = {0427490},
%MRREVIEWER = {E. Vinberg},
%       URL = {https://doi.org/10.1007/BF01364892},
}

\bib{Best}{article}{
   author={Bestvina, Mladen},
   title={The virtual cohomological dimension of Coxeter groups},
   conference={
      title={Geometric group theory, Vol.\ 1},
      address={Sussex},
      date={1991},
   },
   book={
      series={London Math. Soc. Lecture Note Ser.},
      volume={181},
      publisher={Cambridge Univ. Press, Cambridge},
   },
   date={1993},
   pages={19--23},
  % review={\MR{1238512}},
  % doi={10.1017/CBO9780511661860.003},
}

\bib{BCH}{article}{
    AUTHOR = {Baum, Paul},
    AUTHOR = {Connes, Alain},
    AUTHOR = {Higson, Nigel},
     TITLE = {Classifying space for proper actions and {$K$}-theory of group
              {$C^\ast$}-algebras},
 BOOKTITLE = {{$C^\ast$}-algebras: 1943--1993 ({S}an {A}ntonio, {TX}, 1993)},
    SERIES = {Contemp. Math.},
    VOLUME = {167},
     PAGES = {240--291},
 PUBLISHER = {Amer. Math. Soc., Providence, RI},
      YEAR = {1994},
%   MRCLASS = {46L80 (19K56 22D25 46L85)},
%  MRNUMBER = {1292018},
%MRREVIEWER = {Ronghui Ji},
%       DOI = {10.1090/conm/167/1292018},
 %      URL = {http://dx.doi.org/10.1090/conm/167/1292018},
}

\bib{BH}{book}{
  author={Bridson, Martin R.},
  author={Haefliger, Andr{\'e}},
  title={Metric spaces of non-positive curvature},
  series={Grundlehren der Mathematischen Wissenschaften [Fundamental
    Principles of Mathematical Sciences]},
  volume={319},
  publisher={Springer-Verlag, Berlin},
  date={1999},
  pages={xxii+643},
%  isbn={3-540-64324-9},
%  review={\MR{1744486}},
  %doi={10.1007/978-3-662-12494-9},
}

\bib{brownwall}{article}{
    AUTHOR = {Brown, Kenneth S.},
     TITLE = {Groups of virtually finite dimension},
 BOOKTITLE = {Homological group theory ({P}roc. {S}ympos., {D}urham, 1977)},
    SERIES = {London Math. Soc. Lecture Note Ser.},
    VOLUME = {36},
     PAGES = {27--70},
 PUBLISHER = {Cambridge Univ. Press, Cambridge-New York},
      YEAR = {1979},
}  

\bib{brownco}{book}{
    AUTHOR = {Brown, Kenneth S.},
     TITLE = {Cohomology of groups},
    SERIES = {Graduate Texts in Mathematics},
    VOLUME = {87},
 PUBLISHER = {Springer-Verlag, New York-Berlin},
      YEAR = {1982},
     PAGES = {x+306},
      ISBN = {0-387-90688-6},
   MRCLASS = {20-02 (18-01 20F32 20J05 55-01)},
  MRNUMBER = {672956},
MRREVIEWER = {Ross Staffeldt},
}

\bib{Davbuild}{article}{
   author={Davis, Michael W.},
   title={Buildings are $\mathrm{CAT}(0)$},
   conference={
      title={Geometry and cohomology in group theory},
      address={Durham},
      date={1994},
   },
   book={
      series={London Math. Soc. Lecture Note Ser.},
      volume={252},
      publisher={Cambridge Univ. Press, Cambridge},
   },
   date={1998},
   pages={108--123},
%   review={\MR{1709955}},
  % doi={10.1017/CBO9780511666131.009},
}

\bib{Davbook}{book}{
   author={Davis, Michael W.},
   title={The geometry and topology of Coxeter groups},
   series={London Mathematical Society Monographs Series},
   volume={32},
   publisher={Princeton University Press, Princeton, NJ},
   date={2008},
   pages={xvi+584},
%   isbn={978-0-691-13138-2},
%   isbn={0-691-13138-4},
%   review={\MR{2360474}},
}

\bib{DJ}{article}{
    author = {Davis, Michael W.},
    author = {Januszkiewicz, Tadeusz}
     TITLE = {Convex Plytopes, Coxeter Orbifolds, and Torus Actions},
   JOURNAL = {Duke Math. J},
    VOLUME = {62},
      YEAR = {1991},
    NUMBER = {2},
     PAGES = {417--451},
%      ISSN = {0003-486X},
%   MRCLASS = {57S30 (20F38 51F15)},
%  MRNUMBER = {690848},
% MRREVIEWER = {B. N. Apanasov},
 %      DOI = {10.2307/2007079},
 %      URL = {http://dx.doi.org/10.2307/2007079},
}

\bib{deg}{article}{
    AUTHOR = {Degrijse, Dieter},
     TITLE = {A cohomological characterization of locally virtually cyclic
              groups},
   JOURNAL = {Adv. Math.},
%  FJOURNAL = {Advances in Mathematics},
    VOLUME = {305},
      YEAR = {2017},
     PAGES = {935--952},
%      ISSN = {0001-8708},
 %  MRCLASS = {20J06},
%  MRNUMBER = {3570151},
%MRREVIEWER = {Peter A. Linnell},
%       DOI = {10.1016/j.aim.2016.10.014},
%%       URL = {https://doi.org/10.1016/j.aim.2016.10.014},
}

\bib{DMP}{article}{
   author={Degrijse, Dieter},
   author={Mart\'\i nez-P\'erez, Conchita},
   title={Dimension invariants for groups admitting a cocompact model for
   proper actions},
   journal={J. Reine Angew. Math.},
   volume={721},
   date={2016},
   pages={233--249},
   issn={0075-4102},
 %  review={\MR{3574882}},
  % doi={10.1515/crelle-2014-0061},
}

\bib{DPT}{article}{
   author={Dembegioti, Fotini},
   author={Petrosyan, Nansen},
   author={Talelli, Olympia},
   title={Intermediaries in Bredon (co)homology and classifying spaces},
   journal={Publ. Mat.},
   volume={56},
   date={2012},
   number={2},
   pages={393--412},
   issn={0214-1493},
   review={\MR{2978329}},
   %doi={10.5565/PUBLMAT_56212_06},
}

\bib{DicDun}{book}{
   author={Dicks, Warren},
   author={Dunwoody, Martin},
   title={Groups acting on graphs},
      SERIES = {Cambridge Studies in Advanced
Mathematics},
    VOLUME = {17},
   publisher={Cambridge University Press, Cambridge},
   date={1989},
%  pages={xii+544},
%   isbn={0-521-79160-X},
%   isbn={0-521-79540-0},
%   review={\MR{1867354}},
}

\bib{Drani}{article}{
   author={Dranishnikov, A. N.},
   title={On the virtual cohomological dimensions of Coxeter groups},
   journal={Proc. Amer. Math. Soc.},
   volume={125},
   date={1997},
   number={7},
   pages={1885--1891},
   issn={0002-9939},
   %review={\MR{1422863}},
   %doi={10.1090/S0002-9939-97-04106-3},
}

\bib{Dun}{article}{
    AUTHOR = {Dunwoody, M. J.},
     TITLE = {Accessibility and groups of cohomological dimension one},
   JOURNAL = {Proc. London Math. Soc. (3)},
  FJOURNAL = {Proceedings of the London Mathematical Society. Third Series},
    VOLUME = {38},
      YEAR = {1979},
    NUMBER = {2},
     PAGES = {193--215},
%      ISSN = {0024-6115},
 %  MRCLASS = {20J05 (20E99)},
%  MRNUMBER = {531159},
%MRREVIEWER = {D. E. Cohen},
%       DOI = {10.1112/plms/s3-38.2.193},
%       URL = {http://dx.doi.org/10.1112/plms/s3-38.2.193},
}

 %  \bib{DD}{book}{
 %     AUTHOR = {Dicks, Warren},
 %        AUTHOR = {Dunwoody, M. J.},
 %     TITLE = {Groups acting on graphs},
 %    SERIES = {Cambridge Studies in Advanced Mathematics},
 %    VOLUME = {17},
 % PUBLISHER = {Cambridge University Press, Cambridge},
 %      YEAR = {1989},
 %     PAGES = {xvi+283},

    %     ISBN = {0-521-23033-0},
    %   MRCLASS = {20-02 (05C25 57M05)},
    %  MRNUMBER = {1001965},
    % MRREVIEWER = {D. E. Cohen},
 
 %}
 
 \bib{MFR}{article}{
      title={The equivariant K- and KO-theory of certain classifying spaces via an equivariant Atiyah-Hirzebruch spectral sequence}, 
      author={Mario Fuentes-Rumi},
      year={2019},
      note={available at Math ArXiv 1905.02972}
}
 
\bib{Har}{article}{
    AUTHOR = {Harer, John L.},
     TITLE = {The virtual cohomological dimension of the mapping class group
              of an orientable surface},
   JOURNAL = {Invent. Math.},
  FJOURNAL = {Inventiones Mathematicae},
    VOLUME = {84},
      YEAR = {1986},
    NUMBER = {1},
     PAGES = {157--176},
%      ISSN = {0020-9910},
%   MRCLASS = {32G15 (20F38 57N05)},
%  MRNUMBER = {830043},
% MRREVIEWER = {K. Vogtmann},
%       URL = {https://doi.org/10.1007/BF01388737},
}

\bib{Harland}{article}{
   author={Harlander, Jens},
   title={On the dimension of groups acting on buildings},
   conference={
      title={Groups St. Andrews 1997 in Bath, I},
   },
   book={
      series={London Math. Soc. Lecture Note Ser.},
      volume={260},
      publisher={Cambridge Univ. Press, Cambridge},
   },
   date={1999},
   pages={318--328},
   review={\MR{1676627}},
}

\bib{HarMei}{article}{
   author={Harlander, Jens},
   author={Meinert, Holger},
   title={Higher generation subgroup sets and the virtual cohomological
   dimension of graph products of finite groups},
   journal={J. London Math. Soc. (2)},
   volume={53},
   date={1996},
   number={1},
   pages={99--117},
%  issn={0024-6107},
%   review={\MR{1362689}},
%   doi={10.1112/jlms/53.1.99},
}

\bib{LeNu}{article}{
    author = {Leary, Ian J.}
    author={Nucinkis, Brita E. A.},
     TITLE = {Some groups of type {$VF$}},
   JOURNAL = {Invent. Math.},
  FJOURNAL = {Inventiones Mathematicae},
    VOLUME = {151},
      YEAR = {2003},
    NUMBER = {1},
     PAGES = {135--165},
      ISSN = {0020-9910},
%   MRCLASS = {20J05 (20F34)},
%  MRNUMBER = {1943744},
% MRREVIEWER = {David Meier},
%       DOI = {10.1007/s00222-002-0254-7},
%       URL = {https://doi.org/10.1007/s00222-002-0254-7},
}

\bib{LePe}{article}{
    AUTHOR = {Leary, Ian J.},
    AUTHOR={Petrosyan, Nansen},
     TITLE = {On dimensions of groups with cocompact classifying spaces for
              proper actions},
   JOURNAL = {Adv. Math.},
    VOLUME = {311},
      YEAR = {2017},
     PAGES = {730--747},
 %     ISSN = {0001-8708},
%   MRCLASS = {20J05 (20F55 55N25 55P91 57M20)},
%  MRNUMBER = {3628229},
%MRREVIEWER = {Timothy A. Schroeder},
%       DOI = {10.1016/j.aim.2017.03.008},
%       URL = {http://dx.doi.org/10.1016/j.aim.2017.03.008},
}

\bib{Lucksurvey}{article}{,
    AUTHOR = {L\"uck, W.},
     TITLE = {Survey on classifying spaces for families of subgroups},
 BOOKTITLE = {Infinite groups: geometric, combinatorial and dynamical
              aspects},
    SERIES = {Progr. Math.},
    VOLUME = {248},
     PAGES = {269--322},
 PUBLISHER = {Birkh\"auser, Basel},
      YEAR = {2005},
%   MRCLASS = {55R35 (19D55 19G24 20F65 57S25)},
%  MRNUMBER = {2195456},
%MRREVIEWER = {R. M. Vogt},
%       DOI = {10.1007/3-7643-7447-0_7},
 %      URL = {http://dx.doi.org/10.1007/3-7643-7447-0_7},
}

\bib{marper}{article}{
   author={Mart\'{\i}nez-P\'{e}rez, Conchita},
     TITLE = {Euler classes and {B}redon cohomology for groups with
              restricted families of finite subgroups},
   JOURNAL = {Math. Z.},
  FJOURNAL = {Mathematische Zeitschrift},
    VOLUME = {275},
      YEAR = {2013},
    NUMBER = {3-4},
     PAGES = {761--780},
%      ISSN = {0025-5874},
%   MRCLASS = {20J05 (18G35)},
%  MRNUMBER = {3127036},
% MRREVIEWER = {Lutz Struengmann},
%       DOI = {10.1007/s00209-013-1159-4},
%       URL = {https://doi.org/10.1007/s00209-013-1159-4},
}

\bib{MarNuc}{article}{
   author={Mart\'{\i}nez-P\'{e}rez, Conchita},
   author={Nucinkis, Brita E. A.},
   title={Bredon cohomological finiteness conditions for generalisations of
   Thompson groups},
   journal={Groups Geom. Dyn.},
   volume={7},
   date={2013},
   number={4},
   pages={931--959},
   issn={1661-7207},
   review={\MR{3134031}},
   doi={10.4171/GGD/211},
}

\bib{PePry}{article}{,
    AUTHOR = {Petrosyan, N.},
    AUTHOR = {Prytu\l a, T.},
    TITLE = {Bestvina complex for group actions with a strict fundamental
              domain},
   JOURNAL = {Groups Geom. Dyn.},
  FJOURNAL = {Groups, Geometry, and Dynamics},
    VOLUME = {14},
      YEAR = {2020},
    NUMBER = {4},
     PAGES = {1277--1307},
      ISSN = {1661-7207},
}

\bib{Pry}{article}{,

    AUTHOR = {Prytu\l a, T.},
     TITLE = {in preparation},
      %YEAR = {2017},
      %journal={preprint},
%   MRCLASS = {55R35 (19D55 19G24 20F65 57S25)},
%  MRNUMBER = {2195456},
%MRREVIEWER = {R. M. Vogt},
%       DOI = {10.1007/3-7643-7447-0_7},
 %      URL = {http://dx.doi.org/10.1007/3-7643-7447-0_7},
}

\bib{Vog}{article}{,
    AUTHOR = {Vogtmann, Karen},
     TITLE = {Automorphisms of free groups and outer space},
 BOOKTITLE = {Proceedings of the {C}onference on {G}eometric and
              {C}ombinatorial {G}roup {T}heory, {P}art {I} ({H}aifa, 2000)},
   JOURNAL = {Geom. Dedicata},
  FJOURNAL = {Geometriae Dedicata},
    VOLUME = {94},
      YEAR = {2002},
     PAGES = {1--31},
%      ISSN = {0046-5755},
%   MRCLASS = {20F65 (05C25 20E05 20F28)},
 % MRNUMBER = {1950871},
% MRREVIEWER = {Gilbert Levitt},
%       URL = {https://doi.org/10.1023/A:1020973910646},
}

\end{biblist}
\end{bibdiv}

\end{document}